\title{An abundance theorem for generalised pairs}
\author{Zhengyu Hu}
\date{2021/03/22}
\keywords{pluricanonical representations, $B$-representations, slc pairs, abundance, generalized pairs}
\subjclass[2010]{Primary: 14E30; Secondary: 14E07}
\address{Mathematical Sciences Research Center, 
	Chongqing University of Technology, No.69 Hongguang Avenue, Chongqing, 400054, China}
\email{huzhengyu@cqut.edu.cn}
\newcommand{\Supp}[0]{{\operatorname{Supp}}}
\DeclareMathOperator{\Bir}{Bir}
\DeclareMathOperator{\mult}{mult}
\DeclareMathOperator{\Spec}{Spec}
\DeclareMathOperator{\ex}{Ex}
\newtheorem{thm}{Theorem}[section]
\newtheorem{lem}[thm]{Lemma}
\newtheorem{cor}[thm]{Corollary}
\newtheorem{prop}[thm]{Proposition}
\theoremstyle{definition}
\newtheorem{defn}[thm]{Definition}
\newtheorem{rem}[thm]{Remark}
\newtheorem{exa}[thm]{Example}
\newtheorem*{ack}{Acknowledgments}
\newtheorem*{claim*}{Claim}
\newcommand{\K}{\mathbb K}
\newcommand{\PP}{\mathbb P}
\newcommand{\Q}{\mathbb Q}
\newcommand{\R}{\mathbb R}
\newcommand{\Z}{\mathbb Z}
\newcommand{\bir}{\dashrightarrow}
\newcommand{\rddown}[1]{\left\lfloor{#1}\right\rfloor} % round-down
\begin{document}

\maketitle

\begin{abstract}
We prove the finiteness of $B$-representations of generalised log canonical pairs. As a consequence, we prove that, the (relative) abundance for a generalised semi-log canonical pair is implied by the abundance for its normalisation. Furthermore, we obtain an abundance theorem for generalised dlt pairs with abundant data, which is a natural inductive step of generalised minimal model program.
\end{abstract}

\tableofcontents

\section{Introduction}\label{sec1}

Throughout this paper we work over a fixed algebraically closed field of characterstic zero.\\

\emph{Generalised pairs} (\emph{g-pairs} for short), firstly introduced in \cite{birkarzhang} to prove the effectivity of the Iitaka
fibration, are regarded as powerful tools and in recent years led us to a significant development in birational geometry. The prototype of generalised pairs is derived from a canonical bundle formula for a certain class of algebraic fibre spaces, namely lc-trivial fibrations. For a quick and recent survey of these objects we refer to \cite[3.1]{hu2}. The approach of \cite{birkarzhang} allows us to apply most philosophy of minimal model program to log pairs whose boundaries birationally contain nef parts. We like to remark that, in the category of \emph{Kawamata log terminal} (\emph{klt} for short) (g-)pairs, the nef part would not cause too much trouble as long as the boundaries possess some strict positivity, such as bigness.

It is highly nontrivial to extend known results for klt pairs to analogues for \emph{log canonical} (\emph{lc} for short) pairs, such as \cite{birkar-flip}\cite{haconxu}\cite{haconxu-lcc}\cite{hu}\cite{hashizumehu}\cite{has-nonvanishing}. To attack some important conjectures regarding log canonical singularities, we turn to consider g-pairs with data possessing some nice and inductive positivity. As we mentioned previously, a theory of \emph{generalised lc} (\emph{g-lc} for short) pairs would be a powerful tool for investigating lc singularities. \\

\noindent \textbf{Finiteness of $B$-representations.}
We start our study with \emph{$B$-birational maps} and \emph{$B$-representations}. These notions were firstly introduced in \cite{fujino1} and thoroughly studied in \cite{fujino-gongyo}\cite{haconxu}, which plays a crucial role to arrange inductive proofs in higher dimensional algebraic geometry. To carry out further research into lc pairs, we generalise these notions to g-sub-pairs.

\begin{defn}[$B$-birational maps]\label{defnmain-B-bir}
	Two g-sub-pairs $(X/S,B+M)$ and $(X'/S,B'+M')$ are \emph{$B$-birational over $S$} if there is a common log resolution over $S$
	$$
	\xymatrix{	
		&  (\overline{X},\overline{B}+\overline{M})  \ar[dl]_{\alpha}\ar[dr]^{\beta}&\\
		(X, B+M) \ar[dr]_{} \ar@{-->}[rr]^{\varphi}&   &  (X', B'+M') \ar[dl]_{} &\\
		&   S &  &\\
	} 
	$$
	and write 
	$$K_{\overline{X}}+\overline{B}+ \overline{M} = \alpha^* (K_X+B+M);~ K_{\overline{X}}+\overline{B}'+ \overline{M}' = \beta^*(K_{X'}+B'+M'), $$
	where we set $K_X=\alpha_*K_{\overline{X}}$ and $K_{X'}=\beta_*K_{\overline{X}}$,
	then $\overline{B}=\overline{B}'$ and $\overline{M} = \overline{M}'$. In this case, the rational map $\varphi:=(\beta \circ \alpha^{-1}): X \dashrightarrow X'$ is a \emph{$B$-birational map} (cf.\cite[Definition 1.5]{fujino1}). We omit $S$ when it is a point. 
\end{defn}

\begin{defn}[$B$-representations]\label{defnmain-B-rep}
	Let $(X,B+M)$ be a proper g-sub-pair, and let $m$ be a positive
	integer such  $m(K_X + B+M)$ is Cartier. A $B$-birational map $\sigma: X \bir X \in 
	\Bir(X, B,M)$ defines a linear automorphism of $H^0(X, m(K_X + B+M))$. Thus
	we get the group homomorphism
	$$
	\rho_m : \Bir(X, B,M) \to \mathrm{Aut}_{k}(H^0(X, m(K_X + B+M))).
	$$
	The homomorphism $\rho_m$ is called a $B$-representation for $(X, B+M)$. We sometimes simply denote $\rho_m(\varphi)$ by $\varphi^*$
	for $\varphi \in \Bir(X, B,M)$ if there is no danger of confusion. .
\end{defn}

We generalise \cite[Theorem 3.15]{fujino-gongyo}\cite[Theorem 1.2]{haconxu} to g-sub-lc pairs, which asserts the \emph{finiteness of $B$-representations}. We remark that neither the positivity (of log canonical divisors or data) nor the effectiveness of boundaries is needed. Before we state the finiteness theorem, we introduce the following definition which is crucial to the arguments in Subsections \ref{subsec-vertical} and \ref{subsec-horizontal}.

\begin{defn}[General data, =Definition \ref{defn-g-data}]
	Let $(X/Z, B+M)$ be a g-sub-lc pair or a g-slc pair (Definition \ref{defnmain-g-slc-pair}) with data $\overline{M}$ and $\overline{X} \to X$. We say $\overline{M}$ is a \emph{general datum} if
	\begin{enumerate}
		\item $(\overline{X},\overline{B}+\overline{M})$ is a quasi-projective log smooth resolution;
		
		\item The support of $\overline{M}$ contains no strata;
		
		\item  $(\overline{X},\overline{B} - \epsilon \overline{B}^{=1} +\overline{M})$ is log smooth and sub-klt as a sub-pair for every number $\epsilon >0$.
	\end{enumerate}
\end{defn}

\begin{thm}[\text{=Theorem \ref{thm-good-lc}, cf.\cite[Theorem 3.15]{fujino-gongyo}}]\label{thmmain-good-lc}
	Let $(X, B+M)$ be a proper g-sub-lc pair with general data $\overline{M}$. Let $m \ge 2$ be an integer such that $m(K_X+B+M), m\overline{M}$ are Cartier.
	Then $\rho_m(\Bir(X, B,M))$ is a finite group. 
\end{thm}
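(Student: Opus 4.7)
The plan is to follow the strategy of \cite[Theorem 3.15]{fujino-gongyo} and \cite[Theorem 1.2]{haconxu}, adapting each step to accommodate the nef part $M$ by exploiting the general data hypothesis. After a reduction to a log smooth situation, the argument splits into two pieces: an $L^2$/unitary argument on a sub-klt perturbation, and an inductive piece obtained by restricting to the reduced boundary.

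First, using conditions (1) and (2) of general data, I would transfer the problem to $(\overline{X},\overline{B}+\overline{M})$. Any $\varphi \in \Bir(X, B, M)$ lifts to a $B$-birational self-map of $(\overline{X},\overline{B}+\overline{M})$ by taking a common log resolution, and one identifies $H^0(X, m(K_X+B+M)) \cong H^0(\overline{X}, m(K_{\overline{X}}+\overline{B}+\overline{M}))$ via $\alpha_*$. Condition (1) makes $m(K_{\overline{X}}+\overline{B}+\overline{M})$ Cartier on a smooth model, while condition (2), which forbids components of $\overline{M}$ from being log canonical strata, guarantees that passing to $\overline{X}$ does not introduce extraneous sections near $\overline{M}$.

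Second, on $\overline{X}$ I would filter the sections by vanishing along the reduced boundary $\overline{B}^{=1}$. Let $V \subseteq H^0(\overline{X}, m(K_{\overline{X}}+\overline{B}+\overline{M}))$ consist of sections vanishing on every component of $\overline{B}^{=1}$. By condition (3), for any small $\epsilon > 0$ the sub-pair $(\overline{X},\overline{B}-\epsilon \overline{B}^{=1}+\overline{M})$ is log smooth and sub-klt, and sections in $V$ may be regarded as pluricanonical sections of this sub-klt g-pair. The classical $L^2$ argument then furnishes a Hermitian inner product on $V$ that is preserved, up to a positive scalar, by every $B$-birational self-map; hence the image of $\Bir(\overline{X},\overline{B},\overline{M})$ in $\mathrm{GL}(V)$ is both algebraic and relatively compact, so it is finite. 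For the quotient $H^0/V$, restriction to $\overline{B}^{=1}$ is injective and $\Bir$-equivariant; via g-pair divisorial adjunction it lands in the pluri-log-canonical sections of a lower-dimensional generalised semi-log canonical pair, and induction on $\dim X$ (together with the slc-to-lc reduction established elsewhere in the paper) concludes the argument. Combining the two pieces, $\rho_m(\Bir(X, B, M))$ is an extension of two finite groups, hence finite.

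The main obstacle I anticipate is carrying out the $L^2$/unitary step in the presence of the nef part $\overline{M}$. In the ordinary sub-klt case this is Fujino's construction, built from a smooth metric on the pluri-log-canonical bundle; here one additionally needs a smooth semi-positive Hermitian metric on $\mathcal{O}_{\overline{X}}(m\overline{M})$ whose pullback behaves well under $B$-birational maps, exploiting that $\overline{M}$ descends from a nef divisor on a common log resolution. The general data hypothesis is tailored to resolve this: condition (3) creates the perturbation room to absorb $\overline{M}$ into a sub-klt boundary up to $\epsilon$, and condition (2) rules out anomalous behavior along components of $\overline{M}$. The remaining bookkeeping, namely verifying that the inner product is invariant up to scalar and that the algebraicity argument still applies, then goes through as in \cite{fujino-gongyo} and \cite{haconxu}.
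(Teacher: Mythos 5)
Your overall architecture --- absorb $\overline{M}$ into a sub-klt sub-boundary using condition (3) of general data, and handle the reduced boundary by induction --- is in the spirit of the paper, but two steps as written do not go through.

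First, the concluding step ``$\rho_m(\Bir(X,B,M))$ is an extension of two finite groups, hence finite'' is false for subgroups of $\mathrm{GL}(H^0)$ preserving a filtration. If $G\subset \mathrm{GL}(H^0)$ preserves $V$ and has finite image in $\mathrm{GL}(V)\times\mathrm{GL}(H^0/V)$, the kernel of that map consists of unipotent elements $I+N$ with $N\in\Hom(H^0/V,V)$, and such a subgroup can be infinite (it is a subgroup of a vector group, and its nontrivial elements have infinite order). So finiteness on the two graded pieces does not yield finiteness of $G$. The paper avoids the filtration altogether: in the vertical/effective case it produces a $\Bir(X,B,M)$-\emph{invariant} section $s\in H^0(X,a(m'(K_X+B+M)-S))$ and uses multiplication by $s$ to embed all of $H^0(X,m(K_X+B+M))$ equivariantly into the pluricanonical space of a single sub-klt perturbation $(X,B-\tfrac{a}{m+am'}S+M)$, to which \cite[Theorem 3.9]{fujino-gongyo} applies; in the remaining horizontal case it shows $H^0(X,m(K_X+B+M)-T)=0$, so the restriction to $T=B^{=1}$ is injective on the whole space and your subspace $V$ is zero. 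You would need one of these mechanisms (or a proof that every element of your group has uniformly bounded order, followed by Burnside) to close the argument.

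Second, the inductive piece on $H^0/V$ is not ``restriction plus induction'': a $B$-birational self-map of $(X,B+M)$ does not restrict to a $B$-birational self-map of $\rddown{B}$ or of any fixed stratum. One must track how $\varphi$ moves strata --- the content of Lemmas \ref{lem-stratum-1}, \ref{lem-stratum-2} and \ref{lem-induction} --- producing for each component $T_j$ a chain of strata $S^0_j\dashrightarrow S^1_j\dashrightarrow\cdots$ with isomorphic section spaces that eventually becomes periodic; only then does the inductive hypothesis bound the order of $\varphi^*$ on $H^0(T,\cdot)$, and one still needs Burnside's theorem (\cite[Theorem 14.9]{ueno}) to pass from bounded order of every element to finiteness of the group. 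Finally, your ``main obstacle'' about a metric on $\mathcal{O}(m\overline{M})$ dissolves once you commit to the absorption: conditions (2)--(3) of general data make $(\overline{X},\overline{B}-\epsilon\overline{B}^{=1}+\overline{M})$ an honest log smooth sub-klt \emph{sub-pair} with sub-boundary $\overline{B}-\epsilon\overline{B}^{=1}+\overline{M}$, and $B$-birational maps of the g-pair become $\widetilde{B}$-birational maps of that sub-pair (Lemma \ref{lem-B-rep}(2), Lemma \ref{lem-m-B-bir}), so \cite[Theorem 3.9]{fujino-gongyo} applies verbatim and no new $L^2$ input is required.
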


The idea to prove the above theorem is similar to \cite[Theorem 3.15]{fujino-gongyo}. We reduce it to the finiteness of $\widetilde{B}$-pluricanonical representations \cite[Theorem 3.9]{fujino-gongyo}. We also remark that, \cite{haconxu-lcc} provides a Hodge theoretic approach, which might also be able to work for g-pairs.

It is natural to state the above theorem in the relative setting for more applications. For the definition of relative $B$-representations, see Definitions \ref{defn-rel-B-rep} and \ref{defn-rel-B-rep-2}.

\begin{thm}[=Theorem \ref{thm-finiteness}]\label{thmmain-finiteness}
	Let $(X/Z, B+M)$ be a g-sub-lc pair with general data $\overline{M}$. Let $m \ge 2$ be an integer such that $m(K_X + B+M), m\overline{M}$ are Cartier.
	Then $\rho_m(\Bir(X/Z, B,M))$ is a finite group.
\end{thm}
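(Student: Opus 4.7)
The plan is to deduce Theorem~\ref{thmmain-finiteness} from the absolute Theorem~\ref{thmmain-good-lc} by restricting everything to the geometric generic fibre of $f\colon X\to Z$. Let $\eta\in Z$ be the generic point and set $\bar\eta:=\Spec\overline{K(Z)}$, which is a point over an algebraically closed field of characteristic zero. Base-changing the whole datum $(\overline X\to X\to Z,\ \overline B+\overline M)$ along $\bar\eta\to Z$ gives a proper g-sub-pair $(X_{\bar\eta},B_{\bar\eta}+M_{\bar\eta})$ with datum $\overline M_{\bar\eta}$ on $\overline X_{\bar\eta}\to X_{\bar\eta}$, and the divisors $m(K_{X_{\bar\eta}}+B_{\bar\eta}+M_{\bar\eta})$ and $m\overline M_{\bar\eta}$ remain Cartier since Cartierness is preserved under flat base change.

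First I would check that all the relevant structure descends to the geometric generic fibre: sub-lc-ness, log smoothness, the sub-klt perturbation in condition (3) of general data, and quasi-projectivity of $\overline X_{\bar\eta}$ are all preserved under the smooth base change $\bar\eta\to Z$ (discrepancies and dimensions of singular loci being unchanged), while condition (2) survives because the strata of $(\overline X_{\bar\eta},\overline B_{\bar\eta})$ are precisely the geometric generic fibres of the strata of $(\overline X,\overline B)$ dominating $Z$. So $\overline M_{\bar\eta}$ is a general datum for $(X_{\bar\eta},B_{\bar\eta}+M_{\bar\eta})$. Secondly, restriction to $X_{\bar\eta}$ defines a homomorphism
$$
r\colon\Bir(X/Z,B,M)\ \longrightarrow\ \Bir(X_{\bar\eta},B_{\bar\eta},M_{\bar\eta}),
$$
which is injective, since any birational self-map of $X$ over $Z$ is determined by its restriction to the generic fibre. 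Thirdly, unwinding Definitions~\ref{defn-rel-B-rep} and~\ref{defn-rel-B-rep-2}, the relative $B$-representation $\rho_m$ acts on the stalk at $\bar\eta$ of $f_*\mathcal O_X(m(K_X+B+M))$, which by flat base change is canonically and $\Bir(X/Z,B,M)$-equivariantly identified with $H^0(X_{\bar\eta},m(K_{X_{\bar\eta}}+B_{\bar\eta}+M_{\bar\eta}))$. Under this identification $\rho_m(\varphi)=\rho_m(r(\varphi))$ for every $\varphi\in\Bir(X/Z,B,M)$.

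Putting these together, $\rho_m(\Bir(X/Z,B,M))$ is a subgroup of $\rho_m(\Bir(X_{\bar\eta},B_{\bar\eta},M_{\bar\eta}))$, and the latter is finite by Theorem~\ref{thmmain-good-lc} applied on the geometric generic fibre over $\overline{K(Z)}$; this finishes the proof. The main obstacle is not geometric but definitional: one has to verify that the relative $B$-representation really is the action on the geometric generic fibre — this requires checking that formation of $f_*\mathcal O_X(m(K_X+B+M))$ commutes with the base change $\bar\eta\to Z$ (using flatness/generic flatness on a suitable open of $Z$) and that the action of $\Bir(X/Z,B,M)$ is compatible with $r$. A secondary point is that Theorem~\ref{thmmain-good-lc} must be available over $\overline{K(Z)}$ rather than only the fixed base field, but since its proof goes through the finiteness of $\widetilde B$-pluricanonical representations, which is field-independent in characteristic zero (or, if preferred, by the Lefschetz principle), this is not a genuine difficulty.
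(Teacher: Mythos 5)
Your proposal is correct and follows essentially the same route as the paper: the paper's proof is simply ``combine Lemma \ref{rel-lem-rep} and Theorem \ref{thm-good-lc},'' i.e.\ pass to the geometric generic fibre, note that $\rho_m(\Bir(X/Z,B,M))\subseteq\rho_m(\Bir(X_{\overline\eta},B_{\overline\eta},M_{\overline\eta}))$, and apply the absolute finiteness theorem there (the base-field issue being handled, as you note, by the Lefschetz principle, cf.\ the remark after Theorem \ref{thm-B'-rep}). The only simplification available to you is that Definition \ref{defn-rel-B-rep} already defines the relative $\rho_m$ as the action on $H^0(X_{\overline\eta},m(K_{X_{\overline\eta}}+B_{\overline\eta}+M_{\overline\eta}))$, so the cohomology-and-base-change verification you flag as the ``main obstacle'' is definitional rather than something to prove.
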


\noindent \textbf{Generalised semi-log canonical pairs.}
An important ingredient in an inductive argument is to glue hypersurfaces as well as sections of them. To this end, we have to generalise slc pairs to g-slc pairs (Definition \ref{defn-g-slc-pair}) and define (log) abundant data (Definition \ref{defn-g-pair-abundant}) as follows.

\begin{defn}[\text{G-slc pairs, cf.\cite[Definitions 2.1-2.3]{fujino-slc}\cite[Definition 2.5]{fujino-gongyo}}]\label{defnmain-g-slc-pair}
	A \emph{generalised semi-log canonical pair} (\emph{g-slc pair} for short) consists of 
	\begin{itemize}
		\item a reduced $S_2$ scheme $X$, proper over $Z$. We assume that it is pure dimensional and normal crossing in codimension one,
		
		\item an effective Weil divisor $B$ and a Weil divisor $M$, both containing no components of the conductor on $X$, such that $K_X+B+M$ is $\R$-Cartier, and
		
		\item a b-nef and b-$\R$-Cartier divisor $\mathbf{M}$ descends to some projective birational log smooth model $\overline{X} \overset{\pi}{\to} X^\nu \overset{\nu}{\to} X$ where $ X^\nu=\coprod_i X_i \overset{\nu}{\to} X$ is the normalisation, and $\pi_* \overline{M}_i= M_i$ where $\overline{M}_i=\mathbf{M}_{\overline{X}}|_{\overline{X}_i}$ and $K_{X_i}+B_i+M_i=\nu^*(K_X+B+M)|_{X_i}$, 
	\end{itemize}
	which satisfies:
	\begin{enumerate}
		\item $(X_i,B_i+M_i)$ is g-lc with data $\overline{M}_i$ for every $i$; %In particular, $(\overline{X},\overline{B}+\overline{M})$ is g-sub-dlt;
		
		%\item $\overline{M}'=\overline{M}+(\psi)$ does not contain any strata of the double locus in its support;
		
		\item  For any pair of strata $\overline{V}_i,\overline{V}_j \subset \overline{X}$ with their image $V$ a component of the conductor of $X$, and a common resolution such that the diagram commutes,
		$$
		\xymatrix{	
			&  W  \ar[dl]_{\alpha}\ar[dr]^{\beta}&\\
			\overline{V}_i \ar[dr]_{\nu \circ \pi} &   &  \overline{V}_j \ar[dl]^{\nu \circ \pi} &\\
			&   V &  &\\
		} 
		$$
		we have $\alpha^*(\overline{M}'|_{\overline{V}_i})=\beta^*(\overline{M}'|_{\overline{V}_j})$.
	\end{enumerate}
\end{defn}

Although Definition \ref{defnmain-g-slc-pair} seems a bit technical, its prototype appears naturally as the following example. Note that the proof is not trivial (See Lemma \ref{lem-slc-contraction}).

\begin{exa}[\text{cf.\cite[Example 2.6]{fujino-gongyo}}]
	Let $(X, B+M)$ be a g-lc pair where $X$ is quasi-projective $\Q$-factorial klt. We put $S = \rddown{B}$. Then $(S, B_S+M_S)$ is g-slc where $K_{S^\nu} + B_{S^\nu}+M_{S^\nu} = (K_X + B+M)|_{S^\nu}$ given by the generalised adjunction.  
\end{exa}

\begin{defn}[G-slc pairs with abundant data]\label{defnmain-g-pair-abundant}
	A g-slc pair $(X/Z,B+M)$ with data $\overline{M}$ is said to be \emph{with (log) abundant data} if $\overline{M}$ is abundant (resp. log abundant with respect to $(\overline{X},\overline{B}+\overline{M})$). 
\end{defn}

\noindent \textbf{Abundance for generalised pairs.}
Next we generalise \cite[Theorem 1.4]{fujino-gongyo}\cite[Theorem 1.4]{haconxu} to g-slc pairs. We follow the strategy of \cite{fujino1} to glue \emph{admissible sections} (Definition \ref{defn-adim-sec}). We also remark that, \cite{haconxu} follows a different strategy based on Koll\'ar's glueing theory to obtain \cite[Theorem 1.4]{haconxu}, which might also be able to work for g-pairs. As a result, we establish the abundance theorem for g-slc pairs.

\begin{thm}[\text{=Theorem \ref{thm-semiample-slc}}]\label{thmmain-semiample-slc}
	Let $(X/Z,B+M)$ be a g-slc NQC pair and $\nu : X^\nu \to X$ be the normalisation. Assume
	$K_{X^\nu} +B^\nu +M^\nu:= \nu^*(K_X +B+M)$ is semi-ample$/Z$. Then $K_X +B+M$ is semi-ample$/Z$.
\end{thm}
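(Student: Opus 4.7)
Following \cite{fujino1} and \cite{fujino-gongyo}, the plan is to produce, for some sufficiently divisible $m$, a base-point-free sub-linear system of $|m(K_X+B+M)/Z|$ by gluing sections from $X^\nu$. Fix $m$ with $m(K_X+B+M)$ Cartier and $|m(K_{X^\nu}+B^\nu+M^\nu)/Z|$ base-point-free. Let $D \subset X$ be the conductor and $D^\nu := \nu^{-1}(D)$ its reduced preimage on $X^\nu$; the gluing yields an involution $\tau$ on the normalisation of $D^\nu$. By the standard descent criterion, $\nu^*$ identifies $H^0(X,m(K_X+B+M))$ with the subspace of \emph{admissible} sections, namely $s \in H^0(X^\nu, m(K_{X^\nu}+B^\nu+M^\nu))$ with $\tau$-invariant restriction to $D^\nu$. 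It therefore suffices to show the admissible subspace generates a base-point-free system on $X^\nu/Z$.

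The argument would proceed by induction on $\dim X$. By generalised adjunction and clause~(2) of Definition~\ref{defnmain-g-slc-pair}, $\tau$ is a $B$-birational involution of the g-sub-lc pair induced on the normalisation of $D^\nu$, and assembling the adjunctions equips $D$ with the structure of an NQC g-slc pair of dimension $\dim X - 1$ whose g-log-canonical is semi-ample$/Z$ (by restriction of the hypothesis). The inductive hypothesis then gives a base-point-free linear system on $D$, equivalently $\tau$-invariant sections on $D^\nu$ with no common zero. Theorem~\ref{thmmain-finiteness} is the crucial input here: it ensures the gluing action on the relevant section spaces factors through a finite group $G$, so the averaging $s \mapsto |G|^{-1}\sum_{g\in G} g^* s$ on the normalisation of $D^\nu$ is a well-defined linear projection onto $\tau$-invariant sections.

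To lift admissible sections from $D^\nu$ to $X^\nu$, I would combine the short exact sequence
\[
0 \to \mathcal{O}_{X^\nu}(mL^\nu - D^\nu) \to \mathcal{O}_{X^\nu}(mL^\nu) \to \mathcal{O}_{D^\nu}(mL^\nu|_{D^\nu}) \to 0,
\]
where $L^\nu := K_{X^\nu}+B^\nu+M^\nu$, with a Kawamata-Viehweg / Koll\'ar-type vanishing $H^1(X^\nu, mL^\nu - D^\nu) = 0$ for NQC g-lc pairs. The resulting surjectivity of restriction lifts $\tau$-invariant base-point-free sections on $D^\nu$ to admissible sections on $X^\nu$ that cover all of $D^\nu$. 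For $p \in X^\nu \setminus D^\nu$, a section from base-point-freeness on $X^\nu$ can be adjusted within its admissibility class by sections of $\mathcal{O}_{X^\nu}(mL^\nu - D^\nu)$ — itself base-point-free off $D^\nu$ for $m$ divisible enough — to yield an admissible section non-vanishing at $p$.

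The hard part is the vanishing/extension step in the NQC g-lc setting: establishing $H^1(X^\nu, mL^\nu - D^\nu) = 0$ in the presence of the nef datum $M^\nu$, and ensuring the finite-group averaging on the normalised conductor commutes with restriction from $X^\nu$ so that the lifted sections are genuinely admissible. This is precisely where the NQC hypothesis and the cross-stratum compatibility of nef data in Definition~\ref{defnmain-g-slc-pair}(2) play an essential role.
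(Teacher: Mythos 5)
Your overall framework is the right one and matches the paper's: glue via admissible/pre-admissible sections, induct on dimension, and use the finiteness of $B$-representations (Theorem \ref{thmmain-finiteness}) to symmetrise sections into admissible ones. The paper does exactly this, reducing first to $\Q$-coefficients (Lemma \ref{lem-convex-com}, a step you omit but which is needed since the statement allows $\R$-coefficients) and then invoking Lemma \ref{lem-descend} and Theorem \ref{thm-adm-gen}.

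However, the lifting step is where your argument breaks. You propose to deduce surjectivity of $H^0(X^\nu, mL^\nu) \to H^0(D^\nu, mL^\nu|_{D^\nu})$ from a vanishing $H^1(X^\nu, mL^\nu - D^\nu)=0$. Writing $mL^\nu - D^\nu = K_{X^\nu} + (B^\nu - D^\nu) + M^\nu + (m-1)L^\nu$, the twisting divisor $(m-1)L^\nu$ is only semi-ample, not big: the theorem makes no assumption on $\kappa(K_X+B+M)$, and in the non-big case no Kawamata--Viehweg or Koll\'ar-type theorem gives this vanishing (already for $L^\nu \sim_\Q 0$ one is asking for $H^1(X^\nu, -D^\nu)=0$, which is false in general). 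Indeed, the restriction map is typically \emph{not} surjective onto all of $H^0(D^\nu, mL^\nu|_{D^\nu})$; the paper's Proposition \ref{prop-adm-gen} only claims surjectivity onto the \emph{admissible} subspace, and proves it not by vanishing but by descending sections along the fibration $f:X\to Y$ defined by $|mL|$, splitting into the case where $f|_S$ has connected fibres (sections come from the ample model $Y$, Lemma \ref{lem-connected-fibre}) and the case of a disconnected fibre (a $\PP^1$-fibration structure and descent through the degree-$2$ cover $S^h\to T$, Proposition \ref{prop-disconnected-fibre} and Lemma \ref{lem-disconnected-fibre}). Compare also the proof of Theorem \ref{thm-lc-flips-3}, where the analogous surjectivity is obtained only from an \emph{injectivity} theorem plus the extra hypothesis that $K_X+B+M-\epsilon P$ is semi-ample --- a hypothesis unavailable here. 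A secondary issue: your averaging $|G|^{-1}\sum_{g} g^*s$ produces an invariant section but can introduce new zeros, so it does not by itself preserve base-point-freeness; the paper uses elementary symmetric polynomials of the orbit (Lemma \ref{lem-A-2}) precisely to control the common zero locus.
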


Finally, we obtain the following result for g-pairs with abundant dada (cf.\cite[Theorem 3.3]{hu}\cite[Corollary 1.5]{haconxu}). This type of theorem is known to be a natural inductive step of log canonical minimal
model program (see \cite{birkar-flip}\cite{hu}). We expect the following theorem to have miscellaneous applications in the study of log canonical singularities in the future.

\begin{thm}[\text{=Theorem \ref{thm-lc-flips-3}}]\label{thmmain-lc-flips-3}
	Let $(X/Z, B+M)$ be a $\Q$-factorial g-dlt NQC pair with abundant data. Suppose that

    $\bullet$ $K_X + B+M$ is nef$/Z$,

    $\bullet$ $(K_X + B+M)|_{S_i}$ is semi-ample$/Z$ for each irreducible component $S_i$ of $\rddown{B}$,

    $\bullet$ $K_X + B - \epsilon P+M$ is semi-ample$/Z$ for some divisor $P \geq 0$ with $\mathrm{Supp} P = \rddown{B}$ and for any sufficiently small number $\epsilon > 0$.

    Then, $K_X + B+M$ is semi-ample$/Z$.	
\end{thm}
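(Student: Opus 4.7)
The plan is to follow the standard three-step strategy (gluing on the boundary, containment of base locus, extension via vanishing), with Theorem \ref{thmmain-semiample-slc} doing the heavy lifting in the first step. Write $L := K_X + B + M$ and $S := \rddown{B}$.

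\textbf{Step 1 (semi-ampleness on $S$).} By generalised dlt adjunction, $(S, B_S + M_S)$ is a g-slc NQC pair with normalisation $\nu : S^\nu = \coprod_i S_i \to S$ satisfying $\nu^*(K_S + B_S + M_S) = \sum_i (K_{S_i} + B_{S_i} + M_{S_i}) = L|_{S^\nu}$. The second hypothesis says each summand is semi-ample$/Z$, so $L|_{S^\nu}$ is semi-ample$/Z$, and Theorem \ref{thmmain-semiample-slc} gives that $L|_S$ is semi-ample$/Z$.

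\textbf{Step 2 (base locus is contained in $S$).} Fix $\epsilon > 0$ small and $k \in \Z_{>0}$ divisible enough that $k\epsilon P$ is a Cartier effective divisor with support $S$ and $k(L - \epsilon P)$ is base-point-free$/Z$. Multiplying each section of $|k(L - \epsilon P)/Z|$ by the canonical section of $k\epsilon P$ produces sections of $|kL/Z|$ vanishing only along $k\epsilon P$, so $\mathrm{Bs}(|kL|/Z) \subseteq S$.

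\textbf{Step 3 (extension from $S$ to $X$).} For $m$ divisible enough, look at
\[
0 \to \mathcal{O}_X(mL - S) \to \mathcal{O}_X(mL) \to \mathcal{O}_S(mL|_S) \to 0.
\]
Using $L \sim_\R (L - \epsilon P) + \epsilon P$, rewrite
\[
mL - S \sim_\R K_X + \bigl(\{B\} + (m-1)\epsilon P\bigr) + M + (m-1)(L - \epsilon P),
\]
where $\{B\} = B - S$. Since $\mathrm{Supp}\,P = S$ while $\mathrm{Supp}\{B\}$ is disjoint from $S$, a choice of $\epsilon > 0$ sufficiently small (depending on $m$) keeps the coefficients of $\Delta := \{B\} + (m-1)\epsilon P$ strictly less than $1$, so $(X,\Delta + M)$ is g-klt. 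The residual divisor $(m-1)(L - \epsilon P)$ is semi-ample$/Z$ by the third hypothesis. A Kollár--Ambro--Fujino type injectivity/vanishing theorem for g-klt pairs twisted by a semi-ample $\Q$-divisor then yields surjectivity of $H^0(X/Z, mL) \to H^0(S, mL|_S)$.

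\textbf{Conclusion.} For such $m$, the base-point-free system $|mL|_S/Z|$ from Step 1 lifts to sections of $|mL/Z|$ by Step 3, and together with the sections supplied by Step 2 they generate $\mathcal{O}_X(mL)$ on all of $X$. Hence $|mL|/Z$ is base-point-free, so $L = K_X + B + M$ is semi-ample$/Z$.

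\textbf{Main obstacle.} The delicate point is Step 3: the divisor $(m-1)(L - \epsilon P)$ is only semi-ample and typically not big$/Z$, so Kawamata--Viehweg does not apply directly, and one must invoke the generalised-pair version of the injectivity theorem (in the spirit of Ambro--Fujino, or of the Hodge-theoretic arguments in \cite{haconxu-lcc}). One must also balance $\epsilon$ against $m$ so that the auxiliary g-klt structure $(X, \Delta + M)$ remains klt uniformly as $m$ grows; the disjointness of $\mathrm{Supp}\,\{B\}$ and $\mathrm{Supp}\,P$ is what makes this feasible.
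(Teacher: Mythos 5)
Your outline is the same as the paper's: restrict to $S=\rddown{B}$ and apply Theorem \ref{thm-semiample-slc} there, use the third hypothesis to confine the base locus to $S$, and lift sections from $S$ via the exact sequence for $mL-S$ together with the decomposition $mL-S\sim K_X+\{B\}+(m-1)\epsilon P+M+(m-1)(L-\epsilon P)$ and Fujino's injectivity theorem (the paper quotes \cite[Theorem 6.1]{fujino-inj}; you are right that Kawamata--Viehweg alone does not suffice since $L-\epsilon P$ is only semi-ample). For $\Q$-coefficients this is exactly Step 2 of the paper's proof.

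There is, however, a genuine gap: the theorem is stated for an NQC pair with no rationality assumption on $B$ or $\mathbf{M}$ (the paper explicitly says it does not require $\Q$-coefficients), and your argument breaks down at several points when $L=K_X+B+M$ is only $\R$-Cartier. In Step 2 there is no integer $k$ making $k\epsilon P$ Cartier or $k(L-\epsilon P)$ base-point-free as a linear system, and in Step 3 the groups $H^0(X,mL)$ and the sheaf sequence for $\mathcal{O}_X(mL-S)$ are not defined. The paper handles this with two additional ingredients you omit: (i) a rational-polytope argument (its Step 1) producing a rational polytope of pairs $(\Theta,\overline{N})$ near $(\{B\},\overline{M})$ for which g-lc-ness, nefness of $K_X+S+\Theta+N$, \emph{and} semi-ampleness of the restrictions to each $S_i$ all persist, followed by Diophantine approximation (Lemma \ref{diophantine}) to write $K_X+B+M$ as a convex combination of nearby Cartier divisors $m_k(K_X+S+\Delta_k+M_k)$ to which the cohomological argument applies; and (ii) Lemma \ref{semi-ample}, which converts $\mathbf{B}(|L/Z|_{\R})=\emptyset$ into semi-ampleness for an $\R$-divisor on a $\Q$-factorial variety — this last step is not automatic for $\R$-divisors. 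A smaller point: in your Step 3 what you need is not that $\operatorname{Supp}\{B\}$ is disjoint from $S$ (it generally is not), but that $\{B\}$ and $P$ have no common components and that $(X,\{B\}+M)$ is klt as an honest pair after replacing $\overline{M}$ by a general effective member, which is how the paper arranges the hypotheses of the injectivity theorem.
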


{\noindent \textbf{Contents of the paper.}}
In Section \ref{sec2}, we collect definitions, notations and basic results. 
In Section \ref{sec3}, we define $B$-birational maps and $B$-representations in the setting of g-pairs, and prove Theorems \ref{thmmain-good-lc} and \ref{thmmain-finiteness}.
In Section \ref{sec4-2}, we prove Theorem \ref{thmmain-semiample-slc}. In Section \ref{sec4-3}, we prove Theorem \ref{thmmain-lc-flips-3}.

\begin{ack}
	The author thanks Chen Jiang, Yong Hu, Kenta Hashizume and Haidong Liu for discussions. He thanks Professor Caucher Birkar for comments which improve the results in Section \ref{sec4}.
\end{ack}

\section{Preliminaries}\label{sec2}
In this section we collect definitions and some important results. Throughout this paper all varieties and schemes are over an algebraically closed field of characteristic zero, a variety means a disjoint union of irreducible varieties and a divisor refers to an $\R$-Weil divisor unless stated otherwise. 

\subsection{Notations and definitions}\label{sec2-defn}
We collect some notations and definitions. 

\noindent \textbf{Conventions.}
We denote by $\mathbb{K}$ the rational number field $\Q$ or the real number field $\R$. A birational model of a normal variety $X$, often denoted by $X'$, means a variety admits a proper and birational morphism to $X$, and a divisor $D$ over $X$ means a divisor on a birational model of $X$. 

\noindent \textbf{Contractions.}
In this paper a \emph{contraction} refers to a projective morphism $f\colon X\to Y$ of varieties 
such that $f_*\mathcal{O}_X=\mathcal{O}_Y$. In particular, $f$ has connected fibres. Moreover, 
if $X$ is normal, then $Y$ is also normal. A contraction $f$ is \emph{small} if $f$ does not contract any divisor. A birational map $\pi: X \dashrightarrow Y $ is a \emph{birational contraction} if the inverse of $\pi$ does not contract divisors. Note that $\pi$ is not necessarily a morphism unless stated otherwise. 

\noindent  \textbf{Divisors.}
Let $X$ be a normal variety, and let $M$ be a divisor on $X$. 
We denote the coefficient of a prime divisor $D$ in $M$ by $\mult_DM$. %If every non-zero coefficient of $M$ belongs to a set $\Phi\subseteq \R$, we write $M\in \Phi$. 
Writing $M=\sum m_iM_i$ where 
$M_i$ are the distinct irreducible components, the notation $M^{\ge a}$ means 
$\sum_{m_i\ge a} m_iM_i$, that is, we ignore the components with coefficient $<a$. One similarly defines $M^{= a},M^{<a}$, etc.. 

By a $\K$-rational function we mean a formal product of finitely many rational functions with $\K$-exponent, namely $\varphi:=\prod_{i=1}^k \varphi_i^{\alpha_i}$ with $\alpha_i \in \K$ for all $i$. We denote its $\K$-Cartier divisor by $(\varphi):=\sum_{i=1}^k \alpha_i (\varphi_i)$. Given two $\R$-Cartier divisors $D,D'$ on $X$, we say $D,D'$ are $\K$-linearly equivalent, and denote by $D \sim_\K D'$, if there exists a $\K$-rational function $\varphi$ such that $D=D'+(\varphi)$.

Given a proper morphism $f:X \to Z$, we say $D,D'$ are $\K$-linearly equivalent (resp. linearly equivalent, equivalent) over $Z$ and denote by $D \sim_\K D'/Z$ (resp. $D\sim D'/Z$, $D=D'/Z$) if there exists an $\R$-Cartier divisor $D_Z$ on $Z$ such that $D \sim_\K D' + f^* D_Z$ (resp. $D \sim  D' + f^* D_Z$, $D= D' + f^* D_Z$).

\begin{defn}
	Given a birational map $\varphi: X \dashrightarrow X'$ and an $\R$-Cartier divisor $D$, we define its pull-back $$\varphi^* D := \alpha_*\beta^*D,$$
	where $X \overset{\alpha}{\leftarrow} \overline{X} \overset{\beta}{\to} X'$ is a common resolution and $\varphi= \beta \circ \alpha^{-1}$. It is obvious that the definition is independent of the choice of $\overline{X}$.
\end{defn}

\noindent \textbf{Base loci.}
Let $D$ be a pseudo-effective $\R$-Cartier divisor on a normal variety $X$, proper over a quasi-projective variety $Z$. The $\R$-stable base
locus of $D$ is defined as
$$
\mathbf{B}(|D/Z|_\R):= \bigcap \{ \mathrm{Supp}L|L \in |D/Z|_\R \}.
$$ 
If $|D/Z|_\R = \emptyset$, then we put by convention $\mathbf{B}(|D/Z|_\R)= X$. When $D$ is $\Q$-Cartier, we similarly define the $\Q$-stable base locus $\mathbf{B}(|D/Z|_\Q)$. It is obvious that $\mathbf{B}(|D/Z|_\Q)=\mathbf{B}(|D/Z|_\R)$ for any $\Q$-Cartier divisor $D$ (\cite[Lemma 3.5.3]{bchm}), and $\mathbf{B}(|D/Z|_\Q)=\emptyset$  if and only if $D$ is semi-ample$/Z$. When $Z$ is a point or $Z=\Spec A$ is affine, we omit $Z$ and simply denote by $\mathbf{B}(|D|_\R)$.

\noindent \textbf{b-divisors.}
We recall some definitions regarding b-divisors. Let $X$ be a variety. A b-divisor $\mathbf{D}$ of $X$ is a family
$\{\mathbf{D}_{X'}\}_{X'}$ of $\R$-Weil divisors indexed by all birational models $X'$ of $X$,
such that $\mu_*(\mathbf{D}_{X''}) = \mathbf{D}_{X'}$ if $\mu: X'' \to X'$ is a birational contraction.

In most cases we focus on a class of b-divisors but not in full generality. An \emph{$\K$-b-Cartier b-divisor} $\mathbf{M}$ is defined by the choice of  
a projective birational morphism 
$\overline{X} \to X$ from a normal variety and an $\K$-Cartier divisor $\overline{M}$ on $\overline{X}$ in the way that $\mathbf{M}_{X'}=\mu^*\overline{M}$ for any birational model $\mu: X' \to \overline{X}$. In this case we say that $\overline{M}$ \emph{represents} $\mathbf{M}$ or $\mathbf{M}$ \emph{descends} to $\overline{X}$. 

Given an $\K$-b-Cartier b-divisors $\mathbf{M}$ on $X$ represented by $M_Y$ and a surjective proper morphism $f: Y \to X$, we define the pull-back of $\mathbf{M}$ as the $\K$-b-Cartier b-divisors $f^*\mathbf{M}$ represented by $\overline{f}^* \overline{M}$ where $\overline{f}: \overline{Y} \to \overline{X}$ is induced by $f$, $\overline{X} \to X$ and $\overline{Y} \to Y$. 

An $\R$-b-Cartier b-divisor represented by some $\overline{X}\to X$ and $\overline{M}$ is \emph{b-nef} if $\overline{M}$ is 
nef. Similarly we define a \emph{b-nef and abundant} b-divisor if $\overline{M}$ is 
nef and abundant.

We say a few words about the relative case. Let $f : X \to Y$ be a proper surjective morphism of varieties and $\mathbf{D}_1,\mathbf{D}_2$ be $\R$-b-Cartier b-divisors on $X$ represented by $\overline{D_1},\overline{D_2}$ on $\overline{X} \to X$. We say $\mathbf{D}$ is \emph{$\mathbb{K}$-linearly equivalent over $Y$} and denote by $\mathbf{D}_1 \sim_\mathbb{K} \mathbf{D}_2/Y$ if there exists birational models $\phi:X' \to \overline{X}$ and $ Y' \to Y$ such that the induced map $X' \dashrightarrow Y'$ is a morphism and $\phi^* \overline{D_1} \sim_{\mathbb{K}} \phi^* \overline{D_2}/Y'$. The reader may want to check this is well-defined since it is independent of the choice of birational models. In particular, we say $\mathbf{D}$ is \emph{$\mathbb{K}$-linearly trivial over $Y$} if $\mathbf{D} \sim_{\mathbb{K}} 0/Y$.

\noindent \textbf{Pairs.} 
A \emph{sub-pair} $(X/Z,B)$ consists of a normal variety $X$, a proper morphism $X \to Z$ and an $\R$-divisor 
$B$ such that $K_X+B$ is $\R$-Cartier. We say $B$ is a \emph{pre-boundary}.
If the coefficients of $B$ are at most $1$ we say $B$ is a 
\emph{sub-boundary}, and if in addition $B\ge 0$, 
we say $B$ is a \emph{boundary}. A sub-pair $(X/Z,B)$ is called a \emph{pair} if $B$ is a boundary.
When $Z$ is not relevant we usually drop it
and do not mention it: in this case one can just assume $X \to Z$ is the identity. 
When $Z$ is a point we also drop it but say the pair is projective.

Let $\phi\colon W\to X$ be a log resolution of a sub-pair $(X,B)$. Let $K_W+B_W$ be the 
pulback of $K_X+B$. The \emph{log discrepancy} of a prime divisor $D$ on $W$ with respect to $(X,B)$ 
is $1-\mult_DB_W$ and it is denoted by $a(D,X,B)$.
We say $(X,B)$ is a \emph{sub-lc pair} (resp. \emph{sub-klt pair}) 
if $a(D,X,B)$ is $\ge 0$ (resp. $>0$) for every $D$. When $(X,B)$ 
is a pair we remove the sub and say the pair is lc, etc. Note that if $(X,B)$ is an lc pair, then 
the coefficients of $B$ necessarily belong to $[0,1]$.  

Let $(X,B)$ be a sub-pair. A \emph{non-klt place} of $(X,B)$ is a prime divisor $D$ on 
birational models of $X$ such that $a(D,X,B)\le 0$. A \emph{non-klt center} is the image on 
$X$ of a non-klt place. When $(X,B)$ is lc, a non-klt center is also called an 
\emph{lc center}. For definitions and standard results on singularities of pairs
we refer to \cite{kollar-mori}.

\noindent \textbf{Generalised pairs.}
For the basic theory of generalised (polarised) pairs we refer to \cite[Section 4]{birkarzhang}.
Below we recall some of the main notions and discuss some basic properties.

A \emph{generalised sub-pair} (\emph{g-sub-pair} for short) consists of 
\begin{itemize}
	\item a normal variety $X$ equipped with a proper
	morphism $X\to Z$, 
	
	\item an $\R$-divisor $B$ on $X$, and 
	
	\item a b-$\R$-Cartier b-divisor $\mathbf{M}$ represented 
	by some projective birational morphism $\overline{X} \overset{\phi}\to X$ and $\R$-Cartier divisor
	$\overline{M}$ on $X$ such that $\overline{M}$ is nef$/Z$ and $K_{X}+B+M$ is $\R$-Cartier,
	where $M := \phi_*\overline{M}$.
\end{itemize}
A generalised sub-pair is a \emph{generalised pair} (\emph{g-pair} or \emph{pair} for short) if $B$ is effective. 

We usually refer to the sub-pair by saying $(X/Z,B+M)$ is a \emph{g-sub-pair with data} $\overline{M}$ (or $\mathbf{M}$), and we omit the data if it is not important. Since a b-$\R$-Cartier b-divisor is defined birationally, in practice we will often replace $\overline{X}$ with a log resolution (and hence omit it) and replace $\overline{M}$ with its pullback. We say $(\overline{X},\overline{B}+\overline{M})$, where $K_{\overline{X}}+\overline{B}+\overline{M}$ is the pull-back of $K_X+B+M$, is \emph{log smooth} if $(\overline{X},\overline{B}+\overline{M})$ is log smooth (as a sub-pair) and $\mathbf{M}$ descends to $\overline{X}$. In this case, we say $(\overline{X},\overline{B}+\overline{M}) \to X$, is a \emph{log resolution} of $(X,B+M)$.
When $Z$ is not relevant we usually drop it
and do not mention it: in this case one can just assume $X \to Z$ is the identity. 
When $Z$ is a point we also drop it but say the pair is proper. A g-sub-pair naturally defines a b-divisor $\mathbf{K}+\mathbf{B}+\mathbf{M}$ which descends to $X$ so that for any projective birational morphism $X' \overset{\phi'}\to X$ we have $\phi'^*(K_X+B+M)=\mathbf{K}_{X'}+\mathbf{B}_{X'}+\mathbf{M}_{X'}$. We call $\mathbf{B}$ the \emph{pre-boundary b-divisor} and $\mathbf{M}$ the \emph{moduli b-divisor} or the \emph{data}. Similarly, If the coefficients of $\mathbf{B}$ are at most $1$ we say $\mathbf{B}$ is a 
\emph{sub-boundary b-divisor}, and if in addition $B\ge 0$, 
we say $\mathbf{B}$ is a \emph{boundary b-divisor}.

Given a g-sub-pair $(X/Z,B+M)$ with data $\overline{M}$, if $\overline{M}$
is a convex combination of $\Q$-Cartier divisors which are
nef over $Z$, then $\overline{M}$ is \emph{NQC} and the g-pair $(X/Z, B + M)$ is an \emph{NQC g-pair}. Here, NQC stands for \emph{nef $\Q$-Cartier combinations}.  

\noindent \textbf{Generalised singularities.} 
Now we define generalised singularities of a g-pair.
Replacing $X$ we can assume $\phi$ is a log resolution of $(X,B)$. We can write 
$$
K_{\overline{X}}+\overline{B}+\overline{M}=\phi^*(K_{X}+B+M)
$$
for some uniquely determined $B$. For a prime divisor $D$ on $X$ the \emph{generalised log discrepancy} 
$a(D,X,B+M)$ is defined to be $1-\mult_D\overline{B}$. 

A \emph{generalised non-klt center} of a g-sub-pair $(X,B+M)$ is the image of a prime divisor 
$D$ over $X$ with $a(D,X,B+M)\le 0$, and 
the \emph{generalised non-klt locus} of the g-sub-pair is the union of all the generalised non-klt centers. When $(X,B+M)$ is g-lc (resp. g-sub-lc), a generalised non-klt center is also called a 
\emph{g-lc centre} (resp. \emph{g-sub-lc centre}).

We say $(X,B+M)$ is 
\emph{generalised lc} or \emph{g-lc} (resp. \emph{generalised klt} or \emph{g-klt})
if for each $D$ the generalised log discrepancy $a(D,X,B+M)$ is $\ge 0$ (resp. $>0$).
A g-lc pair $(X/Z,B+M)$ with data $\overline{M}$ is \emph{generalised dlt} or \emph{g-dlt} if there is an open subset $U \subset X$ containing the generic points of all g-lc centres as lc centres of $(U,B|_U)$ and $(U,B|_U+M|_U)$ is log smooth. If in addition each connected component of $\rddown{B}$ is irreducible, we say the pair is \emph{generalised plt} or \emph{g-plt} for short. Note that \emph{g-sub-lc, g-sub-klt,} etc. can be defined similarly when we drop the assumption of the effectivity of $B$.

The lemmas below show that the above definition of g-dlt pairs is a natural generalisation of dlt pairs. See also \cite[Remark 2.7]{has-nonvanishing}. This answers a question of J. Han and Z. Li \cite[Remark 2.4]{hanli} for quasi-projective varieties. 

\begin{lem}\label{dlt-resolution}
	Let $(X,B+M)$ be a $\Q$-factorial g-dlt pair with data $\overline{M}$. Then there exists a log resolution $\pi:(\overline{X},\overline{B}+\overline{M}) \to X$ with $a(E,X,B+M)>0$ for every exceptional$/X$ divisor $E$ on $\overline{X}$. In particular, $\pi$ is isomorphic near the generic point of every stratum.
\end{lem}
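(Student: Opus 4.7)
The plan is to construct a log resolution $\pi \colon \overline{X} \to X$ that is an isomorphism over the open subset $U \subset X$ supplied by the g-dlt hypothesis, and then to observe that every $\pi$-exceptional divisor automatically has strictly positive generalised log discrepancy.

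First I would build such a $\pi$. The g-dlt assumption says $(U, B|_U+M|_U)$ is log smooth in the g-pair sense, so in particular $\mathbf{M}_U = M|_U$ is already $\R$-Cartier on $U$, i.e.\ $\mathbf{M}$ descends on $U$. Fix an arbitrary model $\overline{X}_0 \to X$ to which $\mathbf{M}$ descends, and take a log resolution $\pi_1 \colon \overline{X}_1 \to X$ of the underlying pair $(X, B)$ that is an isomorphism over $U$, using functorial resolution of singularities (available because $U$ is smooth and $B|_U$ has snc support). On $\pi_1^{-1}(U) \cong U$ the b-divisor $\mathbf{M}$ is already $\R$-Cartier, so any further blow-ups needed to force $\mathbf{M}$ to descend (obtained by comparing $\overline{X}_1$ with $\overline{X}_0$ on a common resolution and principalising the ideal sheaf controlling the failure of $\mathbf{M}_{\overline{X}_1}$ to be Cartier) can be chosen supported away from $\pi_1^{-1}(U)$. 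Iterating with log resolution of the resulting boundary, I arrive at $\pi \colon \overline{X} \to X$ which is still an isomorphism over $U$, to which $\mathbf{M}$ descends, and for which $(\overline{X}, \overline{B} + \overline{M})$ is log smooth.

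Next I would fix any $\pi$-exceptional prime divisor $E$ on $\overline{X}$ and show $a(E, X, B+M) > 0$. Because $\pi$ is an isomorphism over $U$, necessarily $\pi(E) \subset X \setminus U$. Suppose toward a contradiction that $a(E, X, B+M) \le 0$; since $(X, B+M)$ is g-lc this forces $a(E, X, B+M) = 0$, so $E$ is a generalised non-klt place and $\pi(E)$ is a g-lc centre of $(X, B+M)$. But the g-dlt definition requires the generic point of every g-lc centre to lie in $U$, contradicting $\pi(E) \subset X \setminus U$. Hence $a(E, X, B+M) > 0$ for every $\pi$-exceptional $E$.

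For the final assertion, every stratum of the g-dlt pair is in particular a g-lc centre, whose generic point lies in $U$ by hypothesis; since $\pi$ is an isomorphism over $U$, it is an isomorphism near every such generic point. The main obstacle is the construction step: one must arrange $\pi$ to be simultaneously an isomorphism over $U$ and a model to which $\mathbf{M}$ descends. The crucial observation that unlocks this is that g-dlt already forces $\mathbf{M}_U = M|_U$ to be $\R$-Cartier, so the extra blow-ups needed to principalise $\mathbf{M}$ can all be performed in $X \setminus U$ without disturbing the isomorphism over $U$.
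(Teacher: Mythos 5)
Your reduction of the discrepancy statement to the existence of a log resolution that is an isomorphism over $U$ is fine, and so is the final deduction that any exceptional divisor centred in $X\setminus U$ must have positive generalised log discrepancy. The problem is that the construction of such a resolution is precisely the content of the lemma, and the step where you produce it contains a genuine gap. First, your claim that the g-dlt hypothesis already gives that $\mathbf{M}$ descends over $U$ is a contestable reading of the definition: the lemma is stated as answering the question of Han--Li, whose notion of g-dlt controls only the divisorial pair $(U,B|_U)$ and says nothing about where the nef part descends; the paper's own proof never uses the open set $U$ for this purpose. Second, and more seriously, even granting descent over $U$, the assertion that ``any further blow-ups needed to force $\mathbf{M}$ to descend \ldots{} can be chosen supported away from $\pi_1^{-1}(U)$'' is exactly the hard point and is not justified by ``principalising the ideal sheaf controlling the failure of $\mathbf{M}_{\overline{X}_1}$ to be Cartier''. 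Since $\overline{X}_1$ is smooth, $\mathbf{M}_{\overline{X}_1}$ is automatically $\R$-Cartier; the obstruction to descent is not non-Cartierness but the fact that for a common resolution $g\colon W\to \overline{X}_1$ with $W$ dominating $\overline{X}_0$ one has $\mathbf{M}_W=g^*\mathbf{M}_{\overline{X}_1}-F$ with $F\ge 0$ a nonzero $g$-exceptional divisor (this follows from the negativity lemma, and descent over $U$ only tells you $F$ lies over $X\setminus U$). To conclude you would need a model $\overline{X}\to\overline{X}_1$, isomorphic over $\pi_1^{-1}(U)$, on which $-F$ becomes the pullback of an $\R$-Cartier divisor; a nef-over-$\overline{X}_1$ exceptional divisor need not be relatively semi-ample, so no blow-up of an ideal cosupported in $\overline{X}_1\setminus \pi_1^{-1}(U)$ is known to do the job, and there is no canonical ideal sheaf ``controlling the failure of descent'' to principalise.

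This missing step is what the paper's argument actually supplies, by a completely different mechanism: starting from an arbitrary log resolution it runs a carefully chosen MMP over $X$ on $K_{\overline{X}}+\overline{B}_\epsilon+E+\alpha\overline{M}$, uses \cite[Theorem 1.4]{bhzariski} to see that this MMP is $\overline{M}$-trivial (so that $\mathbf{M}$ descends to the output), and uses the negativity lemma together with the $\Q$-factoriality of $X$ to see that the MMP contracts exactly the exceptional divisors of generalised log discrepancy zero. If you want to salvage your approach, you would need to prove the localisation claim, and the natural proof of it is again a relative MMP; so as written the proposal assumes a statement essentially equivalent to the lemma.
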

\begin{proof}
	We first assume $X$ is quasi-projective. Pick a log resolution $\pi: \overline{X} \to X$, a number $0< \epsilon \ll 1$ and write 
	$$
	K_{\overline{X}} +\overline{B}_\epsilon +\overline{M}=\pi^*(K_X+(1-\epsilon )B +M) +F
	$$
	where $\overline{B}_\epsilon,F \ge 0$ have no common components. Set 
	$$
	N=K_{\overline{X}} +\overline{B}_\epsilon -\overline{M}/X
	$$
	such that $N$ is exceptional$/X$. Pick an exceptional divisor $E\ge 0$ such that 
	\begin{itemize}
		\item $-E$ is ample$/X$;
		
		\item $N+E$ is $\Q$-Cartier;
		
		\item $\|E\| \ll 1$. 
	\end{itemize}
    Now we run an MMP$/X$ on $K_{\overline{X}} +\overline{B}_\epsilon+E +\alpha \overline{M} $. By proof of \cite[Theorem 1.4]{bhzariski}, the above MMP is $\overline{M}$-trivial. Moreover, by the negativity lemma \cite[Theorem 3.5]{birkar-flip} and \cite{bchm}, the MMP terminates and contracts all exceptional divisors with zero log discrepancies, hence $\mathbf{M}$ descends to the canonical model $\overline{X}'$ over $X$. Replacing $\overline{X}$ with a suitable resolution of $\overline{X}'$ we obtain the required log resolution.
    
    Now we treat the general case. Since the canonical model $\overline{X}'$ of $({\overline{X}}/X, \overline{B}_\epsilon+E +\alpha \overline{M})$ still exists, again replacing $\overline{X}$ with a suitable resolution of $\overline{X}'$ we conclude the lemma.
\end{proof}

The following proof is introduced to me by Chen Jiang.

\begin{lem}\label{lem-g-dlt}
	Let $(X,B+M)$ be a quasi-projective g-dlt pair with data $\overline{M}$. Then $X$ admits a small $\Q$-factorialisation which is isomorphic near the generic point of every stratum.  
\end{lem}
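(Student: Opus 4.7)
The plan is to pass to a carefully chosen log resolution of $(X,B+M)$ and then run an MMP over $X$ that contracts precisely the $\pi$-exceptional divisors, relying on the negativity lemma to pin down which divisors get contracted.

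First, apply Lemma \ref{dlt-resolution} to obtain a log resolution $\pi:\overline{X}\to X$ such that every $\pi$-exceptional prime divisor $E$ satisfies $a(E,X,B+M)>0$ and such that $\pi$ is an isomorphism near the generic point of each stratum. Writing $K_{\overline{X}}+\overline{B}+\overline{M}=\pi^*(K_X+B+M)$, every $\pi$-exceptional component of $\overline{B}$ then has coefficient strictly less than $1$, while the non-exceptional part of $\overline{B}$ is the strict transform of $B$ and therefore has coefficients in $[0,1]$. Set $\Delta:=\overline{B}^{\geq 0}$ and let $F$ be the reduced sum of all $\pi$-exceptional prime divisors. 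For any $0<\epsilon\ll 1$ the g-pair $(\overline{X},\Delta+\epsilon F+\overline{M})$ is g-dlt, every exceptional coefficient of $\Delta+\epsilon F$ lies in $[0,1)$, and a short computation gives
$$K_{\overline{X}}+\Delta+\epsilon F+\overline{M}\sim_\R N\ /X,\qquad N:=-\overline{B}^{<0}+\epsilon F,$$
with $N\geq 0$ a $\pi$-exceptional divisor whose support is exactly $F$.

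Next, run a $(K_{\overline{X}}+\Delta+\epsilon F+\overline{M})$-MMP over $X$ with scaling of a general ample$/X$ divisor. By the negativity lemma applied over $X$, each extremal ray contracted in this MMP is covered by curves contained in the strict transform of $\mathrm{Supp}\,N=F$, so the MMP consists of flips and divisorial contractions whose centres lie inside (the image of) $F$. Once the MMP terminates, the resulting log canonical divisor is nef$/X$ and still $\R$-linearly equivalent$/X$ to an effective $\pi$-exceptional divisor, so by the negativity lemma it must vanish; in particular all of $F$ has been contracted. The resulting $f:Y\to X$ is therefore small, $Y$ is $\Q$-factorial since MMP preserves $\Q$-factoriality, and since $\pi$ is already an isomorphism near the generic point of each stratum and the MMP acts trivially away from $F$, the map $f$ is also an isomorphism there.

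The main obstacle is justifying the existence and termination of the MMP above for the g-dlt pair $(\overline{X},\Delta+\epsilon F+\overline{M})$ over $X$. Existence of such an MMP in the NQC g-pair setting is by now standard (see, e.g., \cite{bhzariski}); termination is controlled by the effective and $\pi$-exceptional divisor $N$ via $K_{\overline{X}}+\Delta+\epsilon F+\overline{M}\sim_\R N/X$, in that each divisorial step strictly reduces the number of components of the strict transform of $N$, and the flip sequences in between terminate by a standard special-termination argument applied within the fixed g-dlt pair. Once this is in place, the result follows.
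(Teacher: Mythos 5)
Your overall strategy --- pass to a suitable log resolution, then contract all exceptional divisors by an MMP over $X$ controlled by the negativity lemma --- is the same as the paper's, but two steps have genuine problems. First, you invoke Lemma \ref{dlt-resolution}, whose hypothesis is that $(X,B+M)$ is \emph{$\Q$-factorial} g-dlt; here $X$ is not assumed $\Q$-factorial (producing a $\Q$-factorial model is the whole point of the statement), and the proof of Lemma \ref{dlt-resolution} genuinely uses $\Q$-factoriality in order to pull back $K_X+(1-\epsilon)B+M$. The fix does not need that lemma: the definition of g-dlt provides an open set $U$ containing the generic points of all g-lc centres over which the g-pair is log smooth, and any log resolution which is an isomorphism over $U$ automatically satisfies $a(E,X,B+M)>0$ for every exceptional divisor $E$ (a divisor with $a\le 0$ would have a g-lc centre as its centre, and that centre's generic point lies in $U$, where there are no exceptional divisors). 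This is exactly how the paper's proof begins.

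Second, and more seriously, you run a \emph{generalized} MMP on $K_{\overline{X}}+\Delta+\epsilon F+\overline{M}$ and defer existence and termination to ``standard'' results. The pair is not assumed NQC, so existence of the steps is not a routine citation, and your termination mechanism is wrong: special termination controls flipping loci along the \emph{reduced} part of the boundary, whereas every component of $F$ carries coefficient $\epsilon<1$ in $\Delta+\epsilon F$, so special termination says nothing about flips inside $F$. The correct tool is termination of an MMP with scaling on a log canonical divisor that is $\R$-linearly equivalent over $X$ to an effective, very exceptional divisor (\cite[Theorem 3.5]{birkar-flip}, adapted to g-pairs as in \cite{bhzariski}); this is what the paper cites in the proof of Lemma \ref{dlt-resolution}. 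For the present lemma the paper sidesteps the generalized MMP altogether: it runs a \emph{classical} dlt MMP on $K_{X'}+B'+E'$, using that $K_{X'}+B'+E'=E'-M'/X$ and replacing $M'$ by an effective representative with small multiplicities along the exceptional divisors, so that \cite{bchm} and the classical very-exceptional argument apply directly. Your negativity-lemma bookkeeping (the flipping/contracted loci lie in the strict transform of $F$, and at the end the pushforward of $N$ is nef over $X$, effective and exceptional, hence zero) is fine once existence and termination of the MMP are actually secured.
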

\begin{proof}
	Let $U$ be an open subset $U \subset X$ containing the generic points of all g-lc centres with $(U,B|_U+M|_U)$ log smooth. Then there exists a log smooth sub-pair $\pi:(X',B'+M') \to X$ such that $\pi$ is an isomorphism over $U$. Now run an MMP$/X$ on $K_{X'}+B'+E'$ with scaling of an ample$/X$ divisor, where $E' \ge -B'^{<0}$ contains all exceptional divisors in its support, and $(X',B'+E')$ is dlt. Since $K_{X'}+B'+E'=E'-M'/X$ and $M'$ is the push-down of a nef and big$/X$ divisor, after replacing we can take $M' \ge 0$ and $\mult_{E_i} M'$ sufficiently small for any irreducible component $E_i$ of $E'$. So, this MMP contracts $E'$ after finitely many steps by the negativity lemma \cite[Lemma 3.3]{birkar-flip}, hence reach a $\Q$-factorial model $X''$ which gives a small contraction over $X$. Note that $X'' \to X$ is an isomorphism over $U$. Therefore, by our definition, $(X'',B''+M'')$ is $\Q$-factorial g-dlt.
\end{proof}

%\begin{rem}
%	Lemmas \ref{dlt-resolution} and \ref{lem-g-dlt} 
%\end{rem}

\begin{lem}\label{lem-g-dlt-2}
	Let $(X,B+M)$ be a g-dlt pair. Then, any stratum is normal.
\end{lem}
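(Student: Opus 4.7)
\emph{Proof proposal.} The plan is to proceed in three steps: reduce to $X$ being $\Q$-factorial via Lemma \ref{lem-g-dlt}; prove normality of irreducible components of $\rddown{B}$ by a Kawamata--Viehweg vanishing argument on a thrifty log resolution; and then conclude for deeper strata by induction on the ambient dimension via generalised adjunction. The small $\Q$-factorialisation $X''\to X$ from Lemma \ref{lem-g-dlt} is an isomorphism near the generic point of every stratum, so strata of $(X,B+M)$ and $(X'',B''+M'')$ correspond bijectively and normality transfers; hence I assume $X$ is $\Q$-factorial from now on.

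Let $S$ be an irreducible component of $\rddown{B}$. By Lemma \ref{dlt-resolution}, choose a log resolution $\pi\colon\overline X\to X$ with $a(E,X,B+M)>0$ for every $\pi$-exceptional $E$, isomorphic near the generic point of every stratum---and hence in codimension one on $X$. Write $K_{\overline X}+\overline B+\overline M=\pi^*(K_X+B+M)$ and let $\overline S$ be the strict transform of $S$. The goal is to derive $\pi_*\mathcal{O}_{\overline S}=\mathcal{O}_S$ from the exact sequence
$$
0\to\mathcal{O}_{\overline X}(-\overline S)\to\mathcal{O}_{\overline X}\to\mathcal{O}_{\overline S}\to 0,
$$
which reduces to the vanishing $R^1\pi_*\mathcal{O}_{\overline X}(-\overline S)=0$ together with the identifications $\pi_*\mathcal{O}_{\overline X}=\mathcal{O}_X$ (by normality of $X$) and $\pi_*\mathcal{O}_{\overline X}(-\overline S)=\mathcal{I}_S$ (both are ideal sheaves in $\mathcal{O}_X$ agreeing at the generic point of $S$, and hence everywhere since $S$ is reduced). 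For the vanishing I would perturb $\overline B$ to $\overline B':=\overline B-\delta(\overline B^{=1}-\overline S)$ for $0<\delta\ll 1$, which makes $(\overline X,\overline B'-\overline S+\overline M)$ sub-g-klt, and then invoke relative Kawamata--Viehweg vanishing for g-pairs; bigness over $X$ is automatic because $\pi$ is birational, so only $\pi$-nefness of the residual effective divisor $\delta(\overline B^{=1}-\overline S)$ together with the $\overline M$-contribution needs verification. Once $\pi_*\mathcal{O}_{\overline S}=\mathcal{O}_S$ is in hand, the map $\pi|_{\overline S}$ factors through the normalisation $\nu\colon S^\nu\to S$ and yields $\mathcal{O}_S=\nu_*\mathcal{O}_{S^\nu}$, i.e.\ $S$ is normal.

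For a deeper stratum $W\subsetneq S$, generalised adjunction endows $S$ (now known to be normal) with the structure of a g-dlt pair $(S,B_S+M_S)$ whose strata are precisely the strata of $(X,B+M)$ contained in $S$, using the log smoothness of $(X,B+M)$ near generic points of strata; induction on $\dim X$ then gives normality of $W$ as a stratum of $(S,B_S+M_S)$. The main obstacle is the relative vanishing in the second paragraph: the perturbation must genuinely produce a sub-g-klt g-pair, and one must confirm that relative Kawamata--Viehweg vanishing for generalised pairs applies in this birational setting. A delicate point is the role of the moduli divisor $\overline M$, which is nef over $Z$ and hence over $X$; combined with the effective perturbation and the fractional $\pi$-exceptional contribution absorbed into the boundary, this should supply the required positivity by now-standard techniques in the theory of generalised pairs.
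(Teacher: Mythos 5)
There is a genuine gap in the vanishing step, and it sits exactly where the lemma is nontrivial. You want $R^1\pi_*\mathcal{O}_{\overline X}(-\overline S)=0$ by writing $-\overline S\equiv_{X}K_{\overline X}+\bigl(\overline B'-\overline S\bigr)+\overline M+\delta(\overline B^{=1}-\overline S)$ and applying relative Kawamata--Viehweg to the sub-klt pair $(\overline X,\overline B'-\overline S+\overline M)$. But the residual divisor $\delta(\overline B^{=1}-\overline S)$ is the strict transform $\delta\widetilde T$ of $T:=\rddown{B}-S$, and $\widetilde T=\pi^*T-F$ with $F\ge 0$ exceptional, so $\delta(\overline B^{=1}-\overline S)\equiv_{X}-\delta F$. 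A negative exceptional divisor is not nef over $X$ in general (any contracted curve meeting $F$ but not contained in it gives $-F\cdot C<0$), so the hypothesis of relative KV vanishing fails precisely when some other component of $\rddown{B}$ passes through $S$ --- which is the only case in which normality of $S$ is not already covered by the plt/connectedness argument. This is not a technicality you can wave at ``now-standard techniques'': the obstruction from the other coefficient-one components is the whole difficulty of the dlt normality statement, and it is why the classical proofs do not run a naive vanishing on a resolution. (A vanishing theorem that does apply here is Fujino's relative vanishing for \emph{lc} pairs, with $D-(K_{\overline X}+\Delta)\equiv_\pi 0$ and no perturbation at all; but that is a substantially stronger input than KV, and its g-pair version would itself need justification.)

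The paper's own proof avoids this entirely: it first perturbs the boundary on $X$ itself, following the tie-breaking argument of \cite[Proposition 2.43]{kollar-mori} verbatim, to reduce to the g-plt case (so that locally $S$ is the \emph{whole} round-down), and then concludes by the connectedness lemma. Your overall architecture (reduce to $\Q$-factorial, handle divisorial strata, induct via adjunction for deeper strata) is reasonable and close in spirit, but the divisorial case needs either the tie-breaking reduction to g-plt or an lc-type vanishing theorem; the perturbation on $\overline X$ as written does not supply the required positivity. A minor additional point: your first reduction invokes Lemma \ref{lem-g-dlt}, which is stated for quasi-projective pairs, whereas the lemma at hand has no such hypothesis; since normality is local this is repairable, but it should be said.
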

\begin{proof}
	Since the proof of \cite[Proposition 2.43]{kollar-mori} works verbatim, we can assume $(X,B+M)$ be g-plt. Then the lemma follows from the connectedness lemma.
\end{proof}

\noindent \textbf{Generalised adjunctions.}
Let $(X, B + M)$ be a g-pair with data $\overline{M}$. Assume that $S$ is the normalisation of a component of $\rddown{B}$ and $\overline{S}$ is its birational transform on $\overline{X}$.  Let
$K_{\overline{S}} + B_{\overline{S}} + M_{\overline{S}}^{\psi} := (K_{\overline{X }}+ \overline{B} + \overline{M}+(\psi))|_{\overline{S}}$
where $B_{\overline{S}} = (\overline{B} -\overline{S})|_{\overline{S}}$ and $M_{\overline{S}}^{\psi} = (\overline{M}+(\psi))|_{\overline{S}}$ for some $\K$-rational function $\psi$. Let $B_S,M_S^{\psi}$ be the push forward of $B_{\overline{S}},M_{\overline{S}}^{\psi}$. Then we get the equality
$$
K_S + B_S + M_S^{\psi} = (K_X + B + M+(\psi))|_S
$$
which we refer to as generalised adjunction. Now assume that $(X, B + M)$ is g-lc (resp. g-dlt, g-plt). It is also clear that $(S, B_S + M_S^\psi)$ is also g-lc (resp. g-dlt, g-klt).

We take $(\psi)=0$ if $\overline{M}$ does not contain $\overline{S}$ in its support. In this case, we simply denote $M_S^{\psi}$ by $M_S$.

\noindent \textbf{Minimal models.}
A g-pair $(Y/Z,B_Y+M_Y)$ with data $M_{\overline{Y}}$ is a \emph{log birational model} of a g-pair $(X/Z,B+M)$ with data $\overline{M}$ if we are given a birational map
$\phi\colon X\bir Y$, $B_Y=B^\sim+E$ where $B^\sim$ is the birational transform of $B$ and
$E$ is the reduced exceptional divisor of $\phi^{-1}$, that is, $E=\sum E_j$ where $E_j$ are the
exceptional/$X$ prime divisors on $Y$ and $\overline{M}_Y=\overline{M}$. 

A log birational model $(\overline{X}/Z,\overline{B}+\overline{M})$ is a \emph{log smooth model} of $(X/Z,B+M)$ if it is log smooth with data $\overline{M}$.

A log birational model $(Y/Z,B_Y+M_Y)$ is a \emph{weak log canonical (weak lc for short) model} of $(X/Z,B+M)$ if

$\bullet$ the data $M_{\overline{Y}}=\overline{M}$,

$\bullet$ $K_Y+B_Y+M_Y$ is nef$/Z$, and

$\bullet$ for any prime divisor $D$ on $X$ which is exceptional/$Y$, we have
$$
a(D,X,B+M)\le a(D,Y,B_Y+M_Y).
$$

A weak lc model $(Y/Z,B_Y+M_Y)$ is a \emph{minimal model} of $(X/Z,B+M)$ if

$\bullet$ $Y$ is $\Q$-factorial,

$\bullet$ the above inequality on log discrepancies is strict.

A minimal model $(Y/Z, B_Y+M_Y)$ is a \emph{log minimal model} if

$\bullet$ $(Y/Z,B_Y+M_Y)$ is g-dlt.

A minimal model $(Y/Z, B_Y+M_Y)$ is a \emph{good minimal model} if $K_Y + B_Y +M_Y$ is semi-ample$/Z$. In this case, $K_Y + B_Y +M_Y$ defines a contraction $g:Y \to W$ such that $K_Y + B_Y +M_Y=g^*A_W$ for some ample$/Z$ divisor $A_W$. We say $W$ is the \emph{canonical model} of $(X/Z,B+M)$.  \\

On the other hand, a log birational model $(Y/Z,B_Y+M_Y)$  is called a \emph{weak Mori fibre space} of $(X/Z,B+M)$ if

$\bullet$ there is a $K_Y+B_Y+M_Y$-negative extremal contraction $Y\to T$
with $\dim Y>\dim T$, and

$\bullet$ for any prime divisor $D$ (on birational models of $X$) we have
$$
a(D,X,B+M)\le a(D,Y,B_Y+M_Y)
$$
and strict inequality holds if $D$ is
on $X$ and contracted$/Y$.

A weak Mori fibre space $(Y/Z,B_Y+M_Y)$ is a \emph{Mori fibre space} of $(X/Z,B+M)$ if

$\bullet$ $(Y/Z,B_Y+M_Y)$ is $\Q$-factorial g-dlt.

\subsection{Nef and abundant divisors}\label{sec2-dim}
In this subsection we will introduce the notion of nef and abundant divisor and elementary properties in the setting of $\R$-divisors. Most contents are taken from \cite{hu2}.

\noindent \textbf{Iitaka dimension and numerical dimension.}
Recall the following definitions of Iitaka dimension and numerical dimension. Both integers are birational invariants given by the growth of the quantity of sections.

\begin{defn}[Invariant Iitaka dimension]\label{defn--inv-iitaka-dim}
	Let $X$ be a normal complete variety, and $D$ be an $\mathbb{R}$-Cartier divisor $D$ on $X$. 
	We define the {\em invariant  Iitaka dimension} of $D$, denoted by $\kappa_{\iota}(X,D)$, as follows (see also \cite[Definition 2.5.5]{fujino-book}):  
	If there is an $\mathbb{R}$-divisor $E\geq 0$ such that $D\sim_{\mathbb{R}}E$, set $\kappa_{\iota}(X,D)=\kappa(X,E)$. 
	Here, the right hand side is the usual Iitaka dimension of $E$. 
	Otherwise, we set $\kappa_{\iota}(X,D)=-\infty$. 
	We can check that $\kappa_{\iota}(X,D)$ is well-defined, i.e., when there is $E\geq 0$ such that $D\sim_{\mathbb{R}}E$, the invariant Iitaka dimension $\kappa_{\iota}(X,D)$ does not depend on the choice of $E$. 
	By definition, we have $\kappa_{\iota}(X,D)\geq0$ if and only if $D$ is $\mathbb{R}$-linearly equivalent to an effective $\mathbb{R}$-divisor. 
	
	Let $X\to Z$ be a proper morphism from a normal variety to a variety, and let $D$ be an $\mathbb{R}$-Cartier divisor on $X$. 
	Then the {\em relative invariant Iitaka dimension} of $D$, denoted by $\kappa_{\iota}(X/Z,D)$, is defined by $\kappa_{\iota}(X/Z,D)=\kappa_{\iota}(X,D|_{F})$, where $F$ is a very general fibre of the Stein factorisation of $X\to Z$.
	Note that the value $\kappa_{\iota}(X,D|_{F})$ does not depend on the choice of $F$ (see \cite[Lemma 2.10]{hashizumehu}). 
\end{defn}

\begin{defn}[Numerical dimension]\label{defn--num-dim}
	Let $X$ be a normal projective variety, and $D$ be an $\mathbb{R}$-Cartier divisor $D$ on $X$. 
	We define the {\em numerical dimension} of $D$, denoted by $\kappa_{\sigma}(X,D)$, as follows (see also \cite[V, 2.5 Definition]{nakayama}): 
	For any Cartier divisor $A$ on $X$, we set
	$$
	\sigma(D;A)={\rm max}\left\{k\in \mathbb{Z}_{\geq0}\middle|\, \underset{m\to \infty}{\rm lim}{\rm sup}\frac{{\rm dim}H^{0}(X,\mathcal{O}_{X}(\llcorner mD \lrcorner+A))}{m^{k}}>0\right\}
	$$
	if ${\rm dim}H^{0}(X,\mathcal{O}_{X}(\llcorner mD \lrcorner+A))>0$ for infinitely many $m\in \mathbb{Z}_{>0}$, and otherwise we set $\sigma(D;A):=-\infty$. 
	Then, we define 
	$$\kappa_{\sigma}(X,D):={\rm max}\{\sigma(D;A)\,|\,A{\rm\; is\; a\;Cartier\;divisor\;on\;}X\}.$$
	
	Let $X\to Z$ be a projective morphism from a normal variety to a variety, and let $D$ be an $\mathbb{R}$-Cartier divisor on $X$. 
	Then, the {\em relative numerical dimension} of $D$ over $Z$, denoted by $\kappa_{\sigma}(X/Z,D)$, is defined by $\kappa_{\sigma}(F,D|_{F})$, where $F$ is a very general fibre of the Stein factorisation of $X\to Z$.  
	We note that the value $\kappa_{\sigma}(F,D|_{F})$ does not depend on the choice of $F$, so the relative numerical dimension is well-defined. 
\end{defn}

For a collection of basic properties of the invariant Iitaka dimension and the numerical dimension, we refer to \cite[Remark 2.8]{hashizumehu}. 

\begin{defn}[Relatively abundant divisor and relatively log abundant divisor]\label{defn--abund}
	Let $f\colon X\to Z$ be a projective morphism from a normal variety to a variety, and $D$ be an $\mathbb{R}$-Cartier divisor on $X$. 
	We say that $D$ is {\em abundant over} $Z$ if the equality $\kappa_{\iota}(X/Z,D)=\kappa_{\sigma}(X/Z,D)$ holds. %,  where $F$ is a sufficiently general fiber of the Stein factorization of $\pi$. Note that $\kappa_{\iota}(F,D|_{F})$ does not depend on the choice of $F$. 
	When $Z$ is a point, we simply say $D$ is {\em abundant}. 
	
	Let $f\colon X\to Z$ and $D$ be as above, and $(X,B+M)$ be a g-sub-lc sub-pair. 
	We say that $D$ is $\pi$-{\em log abundant} (or {\em log abundant over} $Z$) with respect to $(X,B+M)$ if $D$ is abundant over $Z$ and for any g-lc center $S$ of $(X,B+M)$ with the normalization $S^{\nu}\to S$, the pullback $D|_{S^{\nu}}$ is abundant over $Z$. 
\end{defn}

\noindent \textbf{Nef and abundant divisors.}
Recall that, given a a proper morphism $f\colon X\to Z$ from a normal variety to a variety, an $\R$-Cartier divisor $D$ is \emph{semi-ample} over $Z$ if there exist a proper surjective morphism $h: X \to Y$ over $Z$ and an ample divisor $D_Y$ of $Y$ such that $D \sim_\R h^*D_Y$.

\begin{lem}[\text{\cite[Lemma 2.7]{hu2}}]\label{lem-semi-ample-divisor}
	Notation as above, let $D$ be an $\R$-Cartier divisor. 
	\begin{enumerate}
		\item $D$ is semi-ample$/Z$ if and only if $D$ is a convex combination of semi-ample$/Z$ $\Q$-divisors.
		
		\item Let $D'$ be another $\R$-Cartier divisor. If $D,D'$ are semi-ample$/Z$, then so is $D+D'$.
		
		\item Let $g:W \to X$ be a proper surjective morphism. Then, $D$ is semi-ample$/Z$ if and only if $g^*D$ is semi-ample$/Z$.
	\end{enumerate}
	 
\end{lem}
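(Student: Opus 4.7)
For part (1), the nontrivial direction is $(\Rightarrow)$. Writing $D\sim_{\R}h^{\ast}D_Y$ with $h\colon X\to Y/Z$ and $D_Y$ ample $\R$-Cartier, I first use that the ample cone is open and contains $D_Y$ to find an ample $\Q$-Cartier divisor $A$ on $Y$ with $A\sim_{\R}D_Y$, so that $D_0:=h^{\ast}A$ is a semi-ample $\Q$-Cartier divisor with $D-D_0=\sum_{k=1}^{r}a_k(\psi_k)$ for some $a_k\in\R$ and rational functions $\psi_k$ on $X$. Each divisor $D_0+\sum c_k(\psi_k)$ with $c_k\in\Q$ is $\Q$-linearly equivalent to $D_0$ and therefore semi-ample$/Z$, so the idea is to enclose the real vector $(a_1,\ldots,a_r)\in\R^r$ in a simplex with rational vertices and express $D$ as the corresponding convex combination of such semi-ample $\Q$-divisors. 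For the direction $(\Leftarrow)$, given $D_i\sim_{\Q}h_i^{\ast}A_i$ with $A_i$ ample $\Q$-Cartier, take the Stein factorisation $h\colon X\to Y$ of the joint morphism $(h_i)\colon X\to\prod_{Z}Y_i$; then $Y$ embeds finitely in the fibre product, $\sum b_i q_i^{\ast}A_i$ is an ample $\R$-Cartier divisor on $Y$ (its analogue on the fibre product is ample as a positive combination of pull-backs of amples along distinct projections, and pullback by a finite morphism preserves ampleness), and its pullback is $\R$-linearly equivalent to $D$.

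Part (2) is then immediate: apply (1) to write $D=\sum b_iD_i$ and $D'=\sum c_jD'_j$ as convex combinations of semi-ample $\Q$-divisors, observe that $\tfrac{1}{2}(D+D')$ is also a convex combination of semi-ample $\Q$-divisors, and apply (1) in reverse, then rescale by $2$. The forward direction of (3) is automatic since $g^{\ast}D\sim_{\R}g^{\ast}h^{\ast}D_Y=(h\circ g)^{\ast}D_Y$ and $D_Y$ remains ample on the same target.

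The main obstacle is the reverse of (3). My plan is to factor $g=g_1\circ g_2$ via Stein factorisation, with $g_1$ finite surjective and $g_2$ a contraction, and treat the two cases separately. In the contraction case, writing $g_2^{\ast}D\sim_{\R}h^{\ast}D_Y$ with $D_Y$ ample, the rigidity lemma forces $h$ to factor as $h'\circ g_2$: on a general fibre $F$ of $g_2$ one has $h^{\ast}D_Y|_{F}\sim_{\R}0$ while $D_Y$ restricts to an ample divisor on $h(F)$, so $h(F)$ must be a point. Descending the $\R$-linear equivalence through the contraction then requires a projection-formula argument; this is the delicate point, and I would handle it by reducing to the $\Q$-Cartier case via (1), where $g_{2\ast}\mathcal{O}_W=\mathcal{O}_{\tilde W}$ directly gives injectivity of $g_2^{\ast}$ on $\Q$-linear equivalence. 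The finite case reduces to the standard descent of semi-ampleness of $\Q$-divisors through finite surjective morphisms combined with (1). Throughout, the key technical point is that the convex-decomposition characterisation in (1) lets us transfer $\R$-statements about semi-ampleness to the $\Q$-setting where classical descent arguments apply.
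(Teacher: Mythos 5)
The paper itself gives no proof of this lemma --- it is imported verbatim from \cite[Lemma 2.7]{hu2} --- so your proposal must be judged on its own. The first step of (1), direction $(\Rightarrow)$, is wrong as stated: openness of the ample cone lets you find ample $\Q$-Cartier divisors \emph{numerically} close to $D_Y$, but it does not produce an ample $\Q$-Cartier $A$ with $A\sim_\R D_Y$; the $\R$-linear equivalence class of an ample $\R$-divisor may contain no $\Q$-divisor at all (on an elliptic curve, $D_Y=\sqrt{2}\,P$ is ample, but everything $\R$-linearly equivalent to it has degree $\sqrt{2}$). The fix is the rational-simplex device you already invoke for the coefficients $a_k$, applied to \emph{all} the data at once: write $D=\sum_j b_j h^*A_j+\sum_k a_k(\psi_k)$ with $A_j$ Cartier on $Y$, observe that ampleness over $Z$ of $\sum_j c_jA_j$ is an open condition on $(c_j)$, and take a rational simplex around the point $(b,a)$ inside that open region; its vertices give the semi-ample $\Q$-divisors whose convex hull contains $D$. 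Your $(\Leftarrow)$ via the Stein factorisation of the map to the fibre product, and part (2), are correct.

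In (3) the reverse implication is the substantive one, and your outline stops exactly at the step that needs proof. The proposed reduction of the contraction case ``to the $\Q$-Cartier case via (1)'' does not go through as described: applying (1) to $g_2^*D$ writes it as a convex combination of semi-ample $\Q$-divisors on $W$, but these need not be pullbacks from $\widetilde W$, so $g_{2*}\mathcal{O}_W=\mathcal{O}_{\widetilde W}$ cannot be applied term by term. What does work, once rigidity gives $h=h'\circ g_2$, is to take the relation $g_2^*(D-h'^*A)=\sum_k a_k(\psi_k)$ and note that the space of linear relations among the relevant prime divisors and principal divisors is defined over $\Q$, so this real relation is a convex combination of rational ones; each rational relation reads $g_2^*F\sim_\Q 0$ for a $\Q$-divisor $F$ on $\widetilde W$ and descends to $F\sim_\Q 0$ by the projection formula. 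Likewise the finite case is only asserted: it does hold (pass to a Galois closure $W'\to X$ with group $G$ and descend the $G$-invariant free subsystem obtained by multiplying a free system over its $G$-orbit), but it must be written out or cited precisely. As it stands the proposal is a reasonable plan with one false step in (1) and the crux of (3) left open.
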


\begin{lem}[Nef and abundant divisor, \text{\cite[Lemma 2.8]{hu2}}]\label{lem-nef-abundant-divisor}
	Let $f\colon X\to Z$ be a projective morphism from a normal variety to a variety, and let $D$ be a nef$/Z$ $\R$-Cartier divisor on $X$. Then, the following conditions are equivalent:
	\begin{enumerate}
		\item $D$ is abundant over $Z$.
		
		\item There exist a birational model $\pi: X' \to X$, a surjective morphism $g : X' \to Y$ of smooth quasi-projective varieties over $Z$,
		and a nef and big$/Z$ divisor $B$ of $Y$ such that $\pi^*D \sim_\R g^*B$.
		
		\item Given a sufficiently general fibre $F$, the asymptotic vanishing order $o_\Gamma(D|_F)=0$ for every prime divisor $\Gamma$ over $F$.
	\end{enumerate} 
\end{lem}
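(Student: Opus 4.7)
\emph{Proof plan.} Each of the three conditions is defined through a very general fibre of the Stein factorisation of $f$, so I would first reduce to the absolute case in which $Z$ is a point and, after a log resolution, $X$ is smooth and projective. In this setting the implication $(2) \Rightarrow (1)$ is straightforward: pulling back the nef and big divisor $B$ under the surjective morphism $g$ preserves both Iitaka and numerical dimensions, so $\kappa_\iota(X',\pi^*D)=\kappa_\iota(Y,B)=\dim Y=\kappa_\sigma(Y,B)=\kappa_\sigma(X',\pi^*D)$, and birational invariance carries the equality down to $X$.

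For $(1) \Rightarrow (2)$ I would first use that nefness gives $\kappa_\sigma(X,D) \ge 0$, so the abundance hypothesis forces $\kappa_\iota(X,D) \ge 0$ and hence provides an effective representative $D_0 \sim_\R D$. Take a log resolution together with the Iitaka fibration $g : X' \to Y$ attached to $D_0$, so that $\dim Y = \kappa_\iota(X,D)$, the general fibre $F$ is connected, and $\kappa_\iota(F,\pi^*D|_F)=0$. Combining additivity of $\kappa_\sigma$ along this fibration with the hypothesis $\kappa_\iota = \kappa_\sigma$ one gets $\kappa_\sigma(F,\pi^*D|_F)=0$, and since $\pi^*D|_F$ is nef this forces numerical triviality on general fibres. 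The plan is then to invoke the descent theorem—a nef $\R$-divisor that is numerically trivial on the general fibres of a surjective morphism is, after possibly replacing $X'$ and $Y$ by suitable birational models, $\R$-linearly equivalent to the pull-back of a divisor $B$ on $Y$—to produce $B$ with $\pi^*D \sim_\R g^*B$; bigness of $B$ then follows from $\kappa_\iota(Y,B) = \dim Y$ together with nefness inherited from $\pi^*D$.

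For the equivalence with $(3)$, starting from the presentation in $(2)$ the restriction $\pi^*D|_F$ to a general fibre $F$ of $f$ is pulled back along the induced morphism to (a fibre of) $Y$, on whose general fibres $\pi^*D$ is even $\R$-linearly trivial; combined with nefness this gives vanishing of $o_\Gamma(D|_F)$ for every prime divisor $\Gamma$ over $F$. Conversely, $(3)$ combined with nefness implies that the asymptotic behaviour of $D|_F$ is controlled entirely by its positive part in the Nakayama $\sigma$-decomposition, from which $\kappa_\iota(D|_F) = \kappa_\sigma(D|_F)$ follows by a standard argument with asymptotic linear series.

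The main obstacle is the $\R$-coefficient descent step in $(1) \Rightarrow (2)$: the classical Kawamata theorem is formulated for $\Q$-divisors, and extending it to $\R$-divisors requires either an approximation argument, writing $D$ as a suitable $\R$-linear combination of $\Q$-Cartier divisors whose Iitaka fibrations can be matched up, or a direct appeal to Nakayama's theory of numerical dimension; in either case, care is needed to ensure that the Iitaka fibration built from the effective representative $D_0$ is genuinely compatible with the numerical structure of $D$ itself.
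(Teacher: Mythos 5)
First, a point of comparison: the paper does not prove this lemma at all --- it is imported verbatim from \cite[Lemma 2.8]{hu2} --- so there is no in-paper argument to measure yours against. Your outline is the standard Kawamata-type characterisation of nef and abundant divisors, and the skeleton (reduce to $Z$ a point, prove $(2)\Rightarrow(1)$ by invariance of $\kappa_\iota$ and $\kappa_\sigma$ under surjective pull-back, prove $(1)\Rightarrow(2)$ via the Iitaka fibration of an effective representative) is the right one.

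Two steps do not close as written. In $(1)\Rightarrow(2)$, the ``descent theorem'' you invoke --- a nef $\R$-divisor numerically trivial on the general fibres of a fibration becomes, after birational modification, $\R$-linearly equivalent to a pull-back --- is false as a general statement: a non-torsion element of $\Pic^0$ of an abelian variety is nef and numerically trivial on the unique fibre of the constant map to a point, yet is not $\R$-linearly equivalent to $0$. Numerical triviality on fibres can at best give numerical descent; the $\R$-linear statement needs the effectivity you already extracted from $\kappa_\iota\ge 0$. The correct route is exactly Lemma \ref{lem-iitaka-fib}: write $\pi^*D\sim_\R g^*D_Y+R$ with $D_Y$ big, $\kappa(\overline{X}/Y,R^h)=0$ and $R^v$ very exceptional. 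The fibrewise computation (which, for the inequality $\kappa_\sigma(X,D)\ge\kappa_\sigma(F,D|_F)+\dim Y$, uses that $\kappa_\sigma$ coincides with the intersection-theoretic numerical dimension for nef divisors) gives $\pi^*D|_F\equiv 0$, hence $R^h|_F=0$ and so $R^h=0$ since $R\ge 0$; then $R^v$ is an effective very exceptional divisor that is nef over $Y$ (being $\sim_\R\pi^*D$ over $Y$), so $R^v=0$ by the negativity lemma for very exceptional divisors \cite[Theorem 3.5]{birkar-flip}. Second, $(3)\Rightarrow(1)$ is asserted rather than argued: vanishing of $o_\Gamma(D|_F)$ for divisors $\Gamma$ \emph{on} $F$ only says $N_\sigma(D|_F)=0$, i.e.\ that $D|_F$ is movable, which is far weaker than abundance; the content of $(3)$ is that it holds for all $\Gamma$ \emph{over} $F$. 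The standard way to exploit this is again Lemma \ref{lem-iitaka-fib}: if the residual divisor $R$ were nonzero, every member of $|D|_F|_\R$ would contain $R$ with multiplicity bounded below (because $\kappa(R^h)=0$ relatively and $R^v$ is very exceptional), so a component $\Gamma$ of $R$ on a suitable model would have $o_\Gamma(D|_F)>0$; hence $R=0$ and one lands in case $(2)$. Without some version of these two arguments the proof does not close.
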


\begin{lem}\label{lem-nef-abund-1}
	Let $f:X \to Z$ be a proper morphism from a normal variety to a variety, $g:Y \to X$ be a proper surjective morphism of normal varieties, and $D$ be an $\R$-Cartier divisor on $X$. Then, $D$ is nef and abundant over $Z$ if and only if $f^*D$ is nef and abundant over $Z$.
\end{lem}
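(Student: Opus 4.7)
The plan is to separate the statement into the nefness assertion and the abundance assertion and verify each passes under pullback by $g$ (reading $f^*D$ as $g^*D$, since $D$ lives on $X$).

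For nefness, both directions are routine for a proper surjective morphism $g$. Given a curve $C \subset X$ in a fibre of $f$, surjectivity of $g$ produces a curve $C' \subset Y$ in a fibre of $f \circ g$ with $g_*C' = d\cdot C$ for some $d > 0$, so $g^*D\cdot C' = d(D\cdot C)$, and the two signs are tied together.

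For abundance, I would first reduce to the absolute case by passing to Stein factorisations. A very general fibre $F'$ of the Stein factorisation of $f \circ g$ maps, via $g$, into a very general fibre $F$ of that of $f$ (after adjusting the notion of ``very general'' compatibly), and the induced morphism $g|_{F'}\colon F'\to F$ is proper surjective of normal projective varieties. Thus it suffices to prove that $D|_F$ is nef and abundant on $F$ iff $(g^*D)|_{F'}$ is on $F'$. I would then invoke Lemma \ref{lem-nef-abundant-divisor}(2) for the forward direction: a birational model $\pi\colon X'\to F$ together with a surjective $h\colon X'\to W$ and nef and big $B$ on $W$ satisfying $\pi^*(D|_F)\sim_\R h^*B$ pulls back through a resolution $\widetilde{F}'$ of the main component of $F'\times_F X'$ to produce $\widetilde\pi^*(g^*D)|_{F'}\sim_\R (h\circ \widetilde g)^*B$, confirming condition (2) for the pullback.

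For the converse, I would switch to characterisation (3) of Lemma \ref{lem-nef-abundant-divisor} and compare asymptotic vanishing orders: any prime divisor $\Gamma$ over $F$ lifts to some prime divisor $\Gamma'$ over $F'$ mapping generically finitely to $\Gamma$, and the ramification formula $o_{\Gamma'}((g^*D)|_{F'}) = e\cdot o_\Gamma(D|_F)$ (with $e$ the ramification index) shows that vanishing for all $\Gamma'$ forces vanishing for all $\Gamma$. The main obstacle will be this converse step: descending information from $Y$ to $X$ along a mere proper surjective morphism is strictly less formal than pushforward-style arguments, with extra care needed for $\Gamma'$ exceptional for $g|_{F'}$. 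I would absorb this into Nakayama's theory \cite{nakayama} of $\sigma$-invariants, which controls how numerical and invariant Iitaka dimensions transform under proper surjective pullback.
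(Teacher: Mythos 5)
The paper states this lemma with no proof at all (it is treated as standard, in the spirit of the neighbouring results quoted from \cite{hu2} and from Nakayama's book \cite{nakayama}), so there is no official argument to compare yours against; I can only assess your sketch on its merits. It is essentially sound. You correctly read the statement's ``$f^*D$'' as ``$g^*D$'' (a typo in the paper), the nefness half via the projection formula and lifting of curves is routine, the reduction to very general fibres of the two Stein factorisations is consistent with how the relative invariants are defined in Definitions \ref{defn--inv-iitaka-dim} and \ref{defn--num-dim}, and the forward direction for abundance via characterisation (2) of Lemma \ref{lem-nef-abundant-divisor} works exactly as you describe.

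The one step that is wrong as literally written is the ``ramification formula'' $o_{\Gamma'}((g^*D)|_{F'}) = e\cdot o_\Gamma(D|_F)$ in the converse. When $g|_{F'}$ has positive-dimensional fibres, a prime divisor $\Gamma'$ over $F'$ dominating $\Gamma$ is \emph{not} generically finite over $\Gamma$, so there is no ramification index, and one should not expect an equality in general. What you actually need, and what suffices, is an inequality $o_{\Gamma'}((g^*D)|_{F'}) \ge c\cdot o_\Gamma(D|_F)$ with $c>0$ equal to the multiplicity of $\Gamma'$ in the pullback of $\Gamma$; this is obtained by Stein-factorising $g|_{F'}$ and treating the connected-fibre part (where sections of multiples of the pullback are exactly pullbacks of sections, so multiplicities along $\Gamma'$ are bounded below by $c$ times multiplicities along $\Gamma$) and the finite part (norm or trace) separately. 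You did flag this as the delicate point and proposed to absorb it into Nakayama's theory; that is the right instinct, and indeed Nakayama's invariance of both $\kappa_\sigma$ and the Iitaka dimension under pullback by proper surjective morphisms gives both implications of the abundance half directly, which is presumably why the author felt free to omit the proof altogether.
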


\begin{lem}\label{lem-nef-abund-2}
		Let $f\colon X\to Z$ be a projective morphism from a normal variety to a variety, and let $D$ be an $\mathbb{R}$-Cartier divisor on $X$. Suppose $D$ is a non-negative $\R$-linear combination of finitely many $\Q$-Cartier divisors which are nef and abundant over $Z$. Then, $D$ is nef and abundant over $Z$. 
\end{lem}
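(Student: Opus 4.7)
My plan is to dispose of nefness and abundance separately, the first being trivial and the second reduced to subadditivity of Nakayama's asymptotic vanishing order via the characterisation in Lemma \ref{lem-nef-abundant-divisor}. Write $D=\sum_{i=1}^k a_i D_i$ with $a_i\ge 0$ and each $D_i$ a $\Q$-Cartier divisor that is nef and abundant over $Z$. For any curve $C$ contracted by $f$ we have $D\cdot C=\sum a_i(D_i\cdot C)\ge 0$, so $D$ is nef$/Z$.

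For abundance, I would invoke the equivalence (1)$\Leftrightarrow$(3) in Lemma \ref{lem-nef-abundant-divisor}. Since there are only finitely many $D_i$, I can choose a single sufficiently general fibre $F$ of the Stein factorisation of $f$ such that condition (3) of that lemma is satisfied by each $D_i$ on $F$. Then for every prime divisor $\Gamma$ over $F$ and every $i$ we have $\sigma_\Gamma(D_i|_F)=0$, where $\sigma_\Gamma$ denotes Nakayama's asymptotic vanishing order.

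The key step is now the subadditivity of $\sigma_\Gamma$ for $\R$-divisors, which gives
$$\sigma_\Gamma(D|_F)=\sigma_\Gamma\Bigl(\sum_{i=1}^k a_i D_i|_F\Bigr)\le \sum_{i=1}^k a_i \sigma_\Gamma(D_i|_F)=0.$$
Since $D$ is already nef$/Z$, the restriction $D|_F$ is nef, hence pseudo-effective, forcing $\sigma_\Gamma(D|_F)\ge 0$; combining, $\sigma_\Gamma(D|_F)=0$ for every $\Gamma$ over $F$. Applying (3) in the reverse direction to $D$ itself yields that $D$ is abundant over $Z$, completing the proof.

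The only points that genuinely need care are the existence of a common sufficiently general fibre $F$ (immediate from the finiteness of the $D_i$) and the $\R$-version of subadditivity of $\sigma_\Gamma$ (a standard feature of Nakayama's theory once one passes from $\Q$- to $\R$-divisors by taking limits of the defining sections), so I do not expect any substantial obstacle.
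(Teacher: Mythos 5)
Your argument is correct. Note first that the paper itself gives no proof of this lemma: it sits in the preliminaries on nef and abundant divisors, where the surrounding material is imported from the reference cited for Lemma \ref{lem-nef-abundant-divisor}, so there is no in-paper argument to compare against. Your reduction is the natural one: nefness is linear, and abundance follows from the equivalence $(1)\Leftrightarrow(3)$ of Lemma \ref{lem-nef-abundant-divisor} once one knows $o_\Gamma(D|_F)=0$ for all $\Gamma$ over a sufficiently general fibre $F$. Two small points are worth making precise. First, the invariant in Lemma \ref{lem-nef-abundant-divisor}(3) must be the asymptotic vanishing order computed from $|{\cdot}|_\R$, i.e.\ $o_\Gamma(N)=\inf\{\mult_\Gamma N' : 0\le N'\sim_\R N\}$, not the numerical $\sigma$-function obtained by ample perturbation (which vanishes for every nef divisor and would make the criterion vacuous); you should use the former consistently. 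With that definition, subadditivity and positive homogeneity need no limiting process at all: each $D_i|_F$ is $\R$-linearly equivalent to an effective divisor (nef and abundant over $Z$ forces $\kappa_\iota(X/Z,D_i)=\kappa_\sigma(X/Z,D_i)\ge 0$), and summing effective representatives gives $D|_F\sim_\R\sum a_iE_i\ge 0$ with $\mult_\Gamma$ additive, whence $o_\Gamma(D|_F)\le\sum a_i\,o_\Gamma(D_i|_F)=0$; the reverse inequality $o_\Gamma\ge 0$ is built into the definition, so the appeal to pseudo-effectivity is superfluous. Second, your choice of a common sufficiently general fibre $F$ for the finitely many $D_i$ is indeed harmless, since each condition excludes only a countable union of proper closed subsets of the base of the Stein factorisation.
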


\begin{defn}[b-nef and log abundant divisors]\label{defn-b-nef-log-abundant-divisor}
	Let $(X/Z,B+M)$ be a g-sub-lc sub-pair with data $\mathbf{M}$, and $\mathbf{D}$ be an $\mathbb{R}$-b-Cartier b-divisor on $X$. We say $\mathbf{D}$ is b-nef and log abundant if for any log resolution $(\overline{X},\overline{B}+\overline{M}) \to X$ to which $\mathbf{D},\mathbf{M}$ descends, we have $\mathbf{D}_{\overline{X}}$ is nef and log abundant. Note that the definition is independent of the choice of $(\overline{X},\overline{B}+\overline{M})$.
\end{defn}

\subsection{Generalised semi-log canonical pairs}
We generalise the notion of \emph{slc pairs} and \emph{sdlt pairs} to the setting of generalised pairs.

\begin{defn}[\text{G-slc pairs, cf.\cite[Definitions 2.1-2.3]{fujino-slc}\cite[Definition 2.5]{fujino-gongyo}}]\label{defn-g-slc-pair}
	A \emph{generalised semi-log canonical pair} (\emph{g-slc pair} for short) consists of 
	\begin{itemize}
		\item a reduced $S_2$ scheme $X$, proper over $Z$. We assume that it is pure dimensional and normal crossing in codimension one,
		
		\item an effective Weil divisor $B$ and a Weil divisor $M$, both containing no components of the conductor on $X$, such that $K_X+B+M$ is $\R$-Cartier, and
		
		\item a b-nef and b-$\R$-Cartier divisor $\mathbf{M}$ descends to some projective birational log smooth model $\overline{X} \overset{\pi}{\to} X^\nu \overset{\nu}{\to} X$ where $ X^\nu=\coprod_i X_i \overset{\nu}{\to} X$ is the normalisation, and $\pi_* \overline{M}_i= M_i$ where $\overline{M}_i=\mathbf{M}_{\overline{X}}|_{\overline{X}_i}$ and $K_{X_i}+B_i+M_i=\nu^*(K_X+B+M)|_{X_i}$, 
	\end{itemize}
    which satisfies:
    \begin{enumerate}
    	\item $(X_i,B_i+M_i)$ is g-lc with data $\overline{M}_i$ for every $i$; %In particular, $(\overline{X},\overline{B}+\overline{M})$ is g-sub-dlt;
    	
    	%\item $\overline{M}'=\overline{M}+(\psi)$ does not contain any strata of the double locus in its support;
    	
    	\item  For any pair of strata $\overline{V}_i,\overline{V}_j \subset \overline{X}$ with their image $V$ a component of the conductor of $X$, and a common resolution such that the diagram commutes,
    	$$
    	\xymatrix{	
    		&  W  \ar[dl]_{\alpha}\ar[dr]^{\beta}&\\
    		\overline{V}_i \ar[dr]_{\nu \circ \pi} &   &  \overline{V}_j \ar[dl]^{\nu \circ \pi} &\\
    		&   V &  &\\
    	} 
    	$$
    	we have $\alpha^*(\overline{M}'|_{\overline{V}_i})=\beta^*(\overline{M}'|_{\overline{V}_j})$.
    \end{enumerate}
\end{defn}
    
    Similarly, we usually refer to the sub-pair by saying $(X/Z,B+M)$ is a g-slc-pair with data $\mathbf{M}$ or $\overline{M}$, and we omit the data if it is not important. When $Z$ is not relevant we usually drop it
    and do not mention it.
    When $Z$ is a point we also drop it but say the pair is proper. We say it is \emph{NQC} if its data $\overline{M}$ is a convex combination of $\Q$-Cartier nef$/Z$ divisors.
 
 Although Definition \ref{defn-g-slc-pair} seems a bit technical, its prototype appears naturally as the following example.
 
\begin{exa}[\text{cf.\cite[Example 2.6]{fujino-gongyo}}]
	Let $(X, B+M)$ be a g-lc pair where $X$ is quasi-projective $\Q$-factorial klt. We put $S = \rddown{B}$. Then $(S, B_S+M_S)$ is g-slc where $K_{S^\nu} + B_{S^\nu}+M_{S^\nu} = (K_X + B+M)|_{S^\nu}$ given by the generalised adjunction. See Lemma \ref{lem-slc-contraction} for a proof.
\end{exa}
    
\begin{defn}[\text{G-sdlt pairs}]
	Notation as above, a g-slc pair $(X,B+M)$ is a \emph{generalised semi-divisorial log terminal pair} (\emph{g-sdlt pair}, for short) if 
	\begin{enumerate}
		\item $X_i$ is normal, that is, $X_i^\nu$ is isomorphic to $X_i$;
		
		\item $(X_i, B_i+M_i)$ is g-dlt.
	\end{enumerate}
\end{defn}

%\begin{rem}
%	Given a g-sdlt pair $(X, B+M)$, notation as in Definition \ref{defn-g-slc-pair}, since $\pi$ can be chosen to be isomorphic at the generic point of every stratum by Lemma \ref{lem-g-dlt}, it suffices to check the condition (3) for birational modifications $\alpha,\beta$. 
%\end{rem}

\begin{lem}\label{lem-g-sdlt}
	Notation as in Definition \ref{defn-g-slc-pair}, we have:
	\begin{enumerate}
		\item There exists a Weil divisor $M' \sim_\R M$ such that the corresponding datum $\overline{M}'$ is log smooth and contains no strata. If $n\overline{M}$ is Cartier, then we can further require $nM \sim nM'$ with the coefficients of $\overline{M}' \le 1/n$.
		
		\item Suppose $(X,B+M)$ is quasi-projective g-sdlt. Then, there exist data $\mathbf{M}' \sim_\R \mathbf{M}$ and $\overline{X} \to X$ such that Condition (2) holds for every pair of strata with the same image on $X$.
		
		\item Suppose $(X,B+M)$ is quasi-projective g-sdlt and $S \le \rddown{B}$ is a reduced divisor. Then, $(S,B_S+M_S)$ is well-defined and g-sdlt. 
	\end{enumerate} 
\end{lem}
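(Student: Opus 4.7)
The plan is to handle the three parts in order, with part (1) providing a technical tool that feeds directly into parts (2) and (3).

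For part (1), I would work on a higher log resolution of $(X, B+M)$ and perform a Bertini-type construction. Replace $\overline{X}$ with a quasi-projective log resolution on which $(\overline{X}, \overline{B})$ is log smooth, and fix a very ample Cartier divisor $A$. Since $\overline{M}$ is nef, for every integer $r \gg 0$ the divisor $n\overline{M} + rA$ is very ample, so Bertini produces a general smooth reduced member $D \in |n\overline{M} + rA|$ whose support contains no stratum of $(\overline{X}, \overline{B})$ and is snc with $\overline{B}$; similarly I choose a general $A' \in |A|$ in general position. Setting $n\overline{M}' := D - rA'$ gives $n\overline{M}' \sim n\overline{M}$ on $\overline{X}$, with the support of $\overline{M}'$ disjoint from every stratum, log smooth with $\overline{B}$, and coefficients of $\overline{M}'$ bounded above by $1/n$. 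Pushing down to $X$ yields $nM' \sim nM$. In the general $\R$-Cartier case (no fixed $n$), I write $\overline{M}$ as a non-negative $\R$-combination of nef Cartier divisors and apply the Cartier construction summand by summand, summing the resulting representatives.

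For part (2), the key observation is that once part (1) is applied the resulting representative $\mathbf{M}' \sim_\R \mathbf{M}$ has $\overline{M}'$ supported off every stratum of $(\overline{X}, \overline{B})$. Consequently, for every pair of strata $\overline{V}_i, \overline{V}_j \subset \overline{X}$ sharing the same image on $X$, both restrictions $\overline{M}'|_{\overline{V}_i}$ and $\overline{M}'|_{\overline{V}_j}$ vanish as divisors, so the compatibility equality $\alpha^*(\overline{M}'|_{\overline{V}_i})=\beta^*(\overline{M}'|_{\overline{V}_j})=0$ holds trivially on any common resolution. A further blow-up of $\overline{X}$ guarantees that a single log resolution serves for all pairs of strata simultaneously, so that $\mathbf{M}'$ descends to the required log smooth model.

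For part (3), I would apply generalised adjunction componentwise. Each irreducible component $S^{(i)}$ of $S$ is a codimension-one stratum of some $(X_k, B_k + M_k)$, and the g-sdlt hypothesis asserts $(X_k, B_k + M_k)$ is g-dlt with $X_k$ normal; by Lemma \ref{lem-g-dlt-2} each $S^{(i)}$ is normal, and the generalised adjunction formula produces a g-dlt pair $(S^{(i)}, B_{S^{(i)}} + M_{S^{(i)}})$ with data the restriction of $\mathbf{M}$. Hence the normalisation of $S$ is $\coprod_i S^{(i)}$. The properties of being reduced, $S_2$, pure dimensional, and normal crossing in codimension one descend from the corresponding properties of the g-dlt structure on each $X_k$ and the fact that $S \le \rddown{B}$. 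Finally, the compatibility condition (2) for the datum of $(S, B_S + M_S)$ along pairs of strata of $\overline{S}$ with the same image is inherited from condition (2) for $(X, B+M)$ via parts (1) and (2) applied to $X$, after restricting to $S$.

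The main obstacle is the bookkeeping in part (3): one must carefully identify the conductor of $S$ with the induced trace from $X$ (distinguishing the case where two components of $S$ lie in a single $X_k$ from the case where they lie in different $X_k$ and meet along the conductor of $X$), and verify that the datum compatibility along this conductor is exactly what part (2) applied to $X$ provides after restriction; this is ultimately where the quasi-projectivity and the normality of each $X_k$ are essential.
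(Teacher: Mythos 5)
Your part (1) is essentially the paper's argument: the paper writes $\overline{M}=H-L$ with $H,L$ supported by general hyperplane sections on a quasi-projective $\overline{X}$ and pushes down using that $X$ is $S_2$; your $n\overline{M}'=D-rA'$ construction is the same Bertini device (modulo the minor point that $rA'$ should be a sum of $r$ distinct general members so that the coefficient bound $1/n$ actually holds). Part (3) is also structurally the paper's route: componentwise generalised adjunction plus Lemma \ref{lem-g-dlt-2} for normality of strata, with (2) supplying the datum compatibility.

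The genuine gap is in your part (2). You argue that because $\Supp\overline{M}'$ contains no stratum, the restrictions $\overline{M}'|_{\overline{V}_i}$ and $\overline{M}'|_{\overline{V}_j}$ ``vanish as divisors,'' so the compatibility is trivial. That is false: ``contains no strata'' means no stratum is a component of (or contained in) $\Supp\overline{M}'$, which makes the restriction to a stratum \emph{well-defined}, not zero. A general member of a very ample system necessarily meets every positive-dimensional stratum, so $\overline{M}'|_{\overline{V}_i}$ is typically a nonzero divisor, and the required equality $\alpha^*(\overline{M}'|_{\overline{V}_i})=\beta^*(\overline{M}'|_{\overline{V}_j})$ is a real condition. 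The paper handles this by building the compatibility into the Bertini choice itself — the hyperplane sections $H,L$ are chosen so that $H|_{\overline{V}_i}=H|_{\overline{V}_j}$ and $L|_{\overline{V}_i}=L|_{\overline{V}_j}$ for strata over the same component of the conductor (e.g.\ by cutting with hyperplanes coming from $X$) — and then, for the upgrade in (2) to \emph{all} pairs of strata with the same image, it invokes Lemmas \ref{dlt-resolution} and \ref{lem-g-dlt} to choose $\pi:\overline{X}\to X^\nu$ an isomorphism at the generic point of every stratum, which is where the g-sdlt and quasi-projectivity hypotheses enter. Your proposal never uses these two lemmas and has no mechanism forcing the two restrictions to agree, so as written the compatibility condition is not established.
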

\begin{proof}
    Since we can assume $\overline{X}$ is quasi-projective, write $\overline{M}=H-L$ with $H,L$ supported by general hyperplane sections. Note that, by a theorem of Bertini, we may require:
	\begin{itemize}
		\item The support $H$ and $L$ contains no strata;
		
		\item $H|_{\overline{V}_i} = H|_{\overline{V}_j},L|_{\overline{V}_i} = L|_{\overline{V}_j}$ for any pair of strata $\overline{V}_i,\overline{V}_j \subset \overline{X}$ with their image $V$ a component of the conductor. 
	\end{itemize}
    Since $X$ is $S_2$, $H-L$ descend to a divisor on $X$. Hence we prove (1). Since $\pi$ can be chosen to be isomorphic at the generic point of every stratum by Lemmas \ref{dlt-resolution} and \ref{lem-g-dlt}, one immediately obtains (2). Finally, (3) follows from (2) and Lemma \ref{lem-g-dlt-2}.
\end{proof}

\begin{defn}[G-slc pairs with abundant data]\label{defn-g-pair-abundant}
	A g-slc NQC pair $(X/Z,B+M)$ with data $\overline{M}$ is said to be \emph{with (log) abundant data} if  $\overline{M}$ is abundant (resp. log abundant with respect to $(\overline{X},\overline{B}+\overline{M})$) over $Z$. 
\end{defn}

\begin{lem}\label{lem-convex-combination}
	Notation as above, let $\overline{M}=\sum_i \alpha_i \overline{M}_i$ be the given convex combination and $\mathcal{M}$ be the rational polytope given by the convex combination. Then, there exists a sub-polytope such that each vertex is nef and (log) abundant$/Z$ for every $j$. Moreover, there exists a sub-polytope $\mathcal{M}'$ such that every divisor $\overline{N} \in \mathcal{M}'$ is supported by $\Supp \overline{M}$.
\end{lem}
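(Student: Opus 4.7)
The plan is to reduce the abundance question to a perturbation question about big divisors on the base of an Iitaka-type fibration, where smallness of perturbations is controlled by openness of the big cone. First I would apply Lemma~\ref{lem-nef-abundant-divisor}(2) to $\overline{M}$ restricted to each g-lc stratum $V$ of $(\overline{X},\overline{B}+\overline{M})$: after passing to a common log resolution $\pi: X' \to \overline{X}$, this yields, for each such stratum, a contraction $g_V: X'_V \to Y_V$ of smooth quasi-projective varieties together with a nef and big $\R$-Cartier divisor $B_V$ on $Y_V$ such that $\pi^*\overline{M}|_{X'_V} \sim_\R g_V^* B_V$.

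Next I would show that each vertex $\overline{M}_i$ of $\mathcal{M}$ is itself numerically descended through each such fibration. On a very general fibre $F$ of $g_V$, the identity $\sum_i \alpha_i\, \pi^*(\overline{M}_i|_V)|_F \sim_\R 0$, combined with the nefness of each summand, forces $\pi^*(\overline{M}_i|_V)|_F \equiv 0$ (a non-negative combination of nef classes that vanishes in $N^1$ must vanish term by term). By the standard descent for nef divisors numerically trivial on general fibres of a fibration, this produces nef $\Q$-Cartier divisors $B_{V,i}$ on $Y_V$ (possibly after a further birational modification) satisfying $\pi^*(\overline{M}_i|_V) \equiv g_V^* B_{V,i}$ and $\sum_i \alpha_i B_{V,i} \equiv B_V$.

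Since nef-and-bigness is an open condition in $N^1(Y_V)_\R$, there is a neighborhood $U_V$ of $(\alpha_i)$ in the coefficient simplex such that for every $(\beta_i) \in U_V$, $\sum_i \beta_i B_{V,i}$ remains nef and big on $Y_V$. Then $\sum_i \beta_i\, \pi^*(\overline{M}_i|_V) \equiv g_V^*(\sum_i \beta_i B_{V,i})$ has numerical dimension at most $\dim Y_V$ (from fibrewise triviality) and invariant Iitaka dimension at least $\dim Y_V$ (from the big pullback on the base), hence is abundant; Lemma~\ref{lem-nef-abund-2} then repackages this as nefness and abundance of the $\R$-combination on $X'_V$. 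Intersecting the neighborhoods $U_V$ over the finitely many strata $V$ yields the first sub-polytope $\mathcal{M}_0 \subset \mathcal{M}$ whose vertices are nef and log abundant$/Z$. For the moreover, the sub-polytope $\mathcal{M}'$ is obtained by further intersecting $\mathcal{M}_0$ with the linear subspace defined by the vanishing of coefficients on all prime components not appearing in $\Supp \overline{M}$; this is a closed rational condition, it contains $\overline{M}$, and the intersection is the desired rational sub-polytope.

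The main obstacle I anticipate is the passage from fibrewise numerical triviality to the genuine $\R$-linear descent needed to bound $\kappa_\iota$ from below: numerical triviality of a nef divisor on general fibres yields only numerical (not $\R$-linear) descent in general, so to conclude $\kappa_\iota\bigl(\sum_i \beta_i \pi^*\overline{M}_i\bigr) \ge \dim Y_V$ one must use the $\Q$-Cartier rationality of the $\overline{M}_i$ together with the explicit $\R$-linear equivalence $\pi^*\overline{M} \sim_\R g_V^* B_V$ to produce enough effective sections of perturbed combinations. A convex-combination argument around the presentation $\overline{M}=\sum_i \alpha_i \overline{M}_i$ handles this uniformly on the sub-polytope.
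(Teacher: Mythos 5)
Your overall skeleton --- restrict to the finitely many strata, apply Lemma \ref{lem-nef-abundant-divisor}(2) to obtain fibrations $g_V\colon X'_V\to Y_V$ with $\pi^*\overline{M}|_{X'_V}\sim_\R g_V^*B_V$ and $B_V$ nef and big$/Z$, exploit openness of bigness, intersect over the strata, and cut by a rational linear support condition for the ``moreover'' --- is the right one and is essentially what the paper does. But the central step, abundance of the perturbed rational combinations, has a genuine gap which you yourself flag and do not close. First, the ``standard descent'' you invoke --- that a nef $\Q$-Cartier divisor numerically trivial on general fibres of a fibration becomes numerically equivalent to a pullback after birational modification --- is not a standard fact for merely nef divisors; it is precisely the kind of descent that requires abundance (this is the content of Lemma \ref{lem-nef-abundant-divisor}), and you cannot apply that lemma to the individual $\overline{M}_i$ because their abundance is what you are trying to prove. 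Second, and more fundamentally, even granting a numerical descent $\pi^*\overline{M}_i\equiv g_V^*B_{V,i}$, numerical equivalence to a big pullback produces no sections, so it cannot bound $\kappa_{\iota}$ from below: abundance is not a numerical condition, as you correctly observe in your last paragraph.

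The fix is to drop the term-by-term descent entirely and treat the $\R$-linear equivalence itself as an affine-rational condition; this is the ``standard argument of convex polytopes'' in the paper's proof. Write $\pi^*\overline{M}=g_V^*B_V+\sum_j r_j(\varphi_j)$ with $r_j\in\R$ and $\varphi_j$ rational functions. The set of triples $(\overline{N},B',(r'_j))$ with $\overline{N}\in\mathcal{M}$, $B'$ supported on the components of $B_V$, and $\pi^*\overline{N}=g_V^*B'+\sum_j r'_j(\varphi_j)$ is cut out by rational affine equations and contains $(\overline{M},B_V,(r_j))$; its projection to $\mathcal{M}$ is therefore a rational sub-polytope containing $\overline{M}$, and over any rational point $\overline{N}$ of it the (nonempty, rational affine) fibre contains a rational point, so $\pi^*\overline{N}\sim_\Q g_V^*B'$ with $B'$ a $\Q$-divisor. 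Shrinking the sub-polytope so that $B'$ remains big$/Z$ (bigness is open in the finite-dimensional space spanned by the components of $B_V$) gives $\kappa_{\iota}=\kappa_{\sigma}=\dim Y_V$ relative to $Z$ for every vertex, hence abundance; nefness is free since every point of $\mathcal{M}$ is a non-negative combination of the nef $\overline{M}_i$, and Lemma \ref{lem-nef-abund-2} upgrades the vertices to the whole sub-polytope. With this replacement your argument closes; the stratumwise reduction via Lemma \ref{lem-nef-abund-1} and the linear support slice defining $\mathcal{M}'$ are fine as you have them.
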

\begin{proof}
	We only prove the abundance case, since the log abundance case can be argued analogously (using Lemma \ref{lem-nef-abund-1}). Since we are free to pull back $\overline{M}$, by Lemma \ref{lem-nef-abundant-divisor}, we can assume there exists a surjective morphism $g: \overline{X} \to Y$ such that $\overline{M} \sim_\R g^*B$ for some nef and big$/Z$ divisor $B$. By a standard argument of convex polytopes, we can shrink $\mathcal{M}$ to obtain the required combination. The last assertion is obvious.
\end{proof}

\subsection{Iitaka fibrations and parabolic fibrations}\label{subsec-iitaka}
In this subsection, we discuss (invariant) Iitaka fibrations for $\K$-divisors ($\K$=$\R$ or $\Q$). Some parts are taken from \cite{hu3}.

\begin{defn}[Invariant Iitaka fibration, \text{\cite[Definition 2.2]{hu3}}]
	Let $X$ be a normal variety, $f:X \to Z$ be a proper morphism, and $D$ be an $\mathbb{R}$-Cartier divisor on $X$ with $\kappa_{\iota}(X/Z,D) \ge 0$. Pick an $\R$-Cartier divisor $E \ge 0$ such that $D\sim_\R E/Z$. Then there exists a contraction $\phi: X' \to Y$ of smooth varieties such that for all sufficiently large integers $m > 0$, the rational maps $\phi_{m} : X \dashrightarrow Y_m$ given by $f ^*f_*\mathcal{O}_X(\rddown{mE})$ are birationally equivalent to $\phi$, that is, there exists a commutative diagram
	$$
	\xymatrix{
		X \ar@{-->}[d]_{\phi_m}   &  X' \ar[d]^{\phi} \ar[l]_{\pi} &\\
		Y_m  &    Y  \ar@{-->}[l]^{\varphi_m}} 
	$$  
	of rational maps $\phi_m,\varphi_m$ and a contraction $\pi$, where the horizontal maps
	are birational, $\dim Y = \kappa_\iota(X,D)$, and $\kappa(X'/Y,f^*E ) = 0$. Such a fibration is called an \emph{invariant Iitaka fibration} or simply an \emph{Iitaka fibration} of $D$. It is unique up to birational equivalence.
\end{defn}

\begin{rem}[\text{\cite[Lemma 2.3]{hu3}}]
	The definition above is well-defined and independent of the choice of $E$. Note that if $D$ is $\Q$-Cartier, then it is the usual Iitaka fibration.
\end{rem}

We prove the following lemma which is a result of weak toroidal reduction \cite{adk} and equidimensional reduction \cite[Proposition 4.4]{ak}.

\begin{lem}\label{lem-iitaka-fib}
	Let $X$ be a normal variety proper over $Z$, and $D$ be a $\mathbb{K}$-Cartier divisor on $X$ with $\kappa_{\iota}(X/Z,D) \ge 0$. Then there exists a log resolution $\pi:\overline{X} \to X$ of $(X,\Supp D)$ and a projective morphism $f: \overline{X} \to Y$ of smooth quasi-projective varieties over $Z$ such that 
	\begin{enumerate}
		\item $\pi^*D\sim_\K f^*D_Y+ R$ where $D_Y,R \ge 0$ are snc $\K$-divisors and $D_Y$ is big$/Z$;
		
		\item $\kappa(\overline{X}/Y,R^h)=0$ and $R^v$ is very exceptional$/Y$, where $R^h$ (resp. $R^v$) denotes the horizontal (resp. vertical) part.
	\end{enumerate}
\end{lem}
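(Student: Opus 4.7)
The strategy is to combine the invariant Iitaka fibration of $D$ with the weak toroidal and equidimensional reduction theorems of Abramovich--Denef--Karu and Abramovich--Karu cited in the statement, then to adjust the residue by subtracting vertical $\K$-divisors pulled back from $Y$.

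First I would use $\kappa_\iota(X/Z,D)\ge 0$ to pick an effective $E$ with $D\sim_\K E/Z$, and replace $X$ by a log resolution of $(X,\Supp D+\Supp E)$. The invariant Iitaka fibration of $D$ then produces, after a further blow-up of the source, a projective contraction $f_0\colon X_0\to Y_0$ of smooth quasi-projective varieties over $Z$ with $\dim Y_0=\kappa_\iota(X/Z,D)$ and $\kappa(X_0/Y_0,\pi_0^*E)=0$. For sufficiently large and divisible $m$ the linear system $|\rddown{m\pi_0^*E}|/Z$ factors through a big linear system on $Y_0$, giving a relation $\rddown{m\pi_0^*E}\sim f_0^*H_m+F_m$ with $H_m$ big Cartier on $Y_0$ and $F_m\ge 0$ the fixed part. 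Dividing by $m$ and absorbing the fractional part $\{m\pi_0^*E\}/m$ into the residue yields $\pi_0^*E\sim_\K f_0^*D_{Y_0}^{(0)}+R_0$ with $D_{Y_0}^{(0)}$ big$/Z$ and $R_0\ge 0$.

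Next I would apply \cite{adk} to replace $f_0$ by a toroidal morphism, and then \cite[Proposition 4.4]{ak} to make it equidimensional; a final log resolution produces $f\colon\overline{X}\to Y$, a log resolution $\pi\colon\overline{X}\to X$ of $(X,\Supp D+\Supp E+\Supp R_0)$, and an equidimensional $f$, with everything in an snc divisor. Pulling the previous decomposition back gives $\pi^*D\sim_\K f^*D_Y^{(0)}+R^{(0)}$, and $\kappa(\overline{X}/Y,(R^{(0)})^h)=0$ is inherited from the Iitaka fibration property $\kappa(X_0/Y_0,\pi_0^*E)=0$.

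To force $R^v$ to be very exceptional$/Y$ I would exploit the equidimensionality: every vertical prime on $\overline{X}$ dominates a prime divisor of $Y$, and for each $Q\subset Y$ one has $f^*Q=\sum_i m_i P_i$ where the $P_i$ are the components of $f^{-1}(Q)$. Whenever every $P_i$ appears in $R^{(0)}$ with positive coefficient, I set $\lambda_Q=\min_i(\mathrm{coeff}_{P_i}R^{(0)}/m_i)>0$, subtract $\lambda_Q f^*Q$ from $R^{(0)}$ and add $\lambda_Q Q$ to $D_Y^{(0)}$; this preserves both the $\K$-linear equivalence and the bigness of $D_Y$, while strictly lowering the coefficient of at least one $P_i$ to zero. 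Iterating over the finitely many relevant $Q$ yields the required $D_Y$ and $R$ satisfying (1) and (2). The main obstacle is achieving equidimensionality, the snc structure, and the Iitaka property simultaneously on a single model, which is precisely what the combined use of \cite{adk} and \cite[Proposition 4.4]{ak} supplies.
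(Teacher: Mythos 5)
Your proposal is correct and follows essentially the same route as the paper: pass to the invariant Iitaka fibration of $E\sim_\K D$, apply the weak toroidalization of \cite{adk} together with the equidimensional reduction \cite[Proposition 4.4]{ak}, and then adjust the vertical residue by subtracting pullbacks of $\K$-divisors from $Y$ until it becomes very exceptional. In fact your write-up makes explicit two points the paper leaves implicit — that the bigness of $D_Y$ comes from the moving part of $|\rddown{m\pi_0^*E}|$ descending to a big system on $Y_0$, and the precise coefficient-subtraction procedure $\lambda_Q=\min_i(\mathrm{coeff}_{P_i}R^{(0)}/m_i)$ enabled by equidimensionality — so no gap to report.
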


\begin{proof}
	Let $f:\overline{X} \to Y$ be an Iitaka fibration of $0 \le E \sim_\K D$. By making minor changes to the proof of Lemma \ref{lem-iitaka-fib} with an aid of \cite{adk}, we can construct a commutative diagram
	$$
	\xymatrix{
		\overline{X}' \ar[d]_{f'} \ar[r]^{\pi'}   &  \overline{X}\ar[d]^{f}\  &\\
		Y' \ar[r]^{\phi} &    Y } 
	$$
	satisfying (for notations and basic properties of toroidal morphisms, we refer to \cite{hu2}):
	\begin{enumerate}
		\item $\pi'$ and $\phi$ are birational morphisms from quasi-projective varieties. 
		
		\item $f':(\overline{X}',\overline{\Delta}') \to (Y',\Delta_{Y'})$ is toroidal from a quasi-smooth toroidal variety to a smooth variety, and $f'^{-1}(Y' \setminus \Delta_{Y'})$ is smooth.
		
		\item $f'$ is flat.
		
		\item $\pi^{-1}(\Supp \overline{E} \bigcup \Supp \overline{D} \bigcup \ex(\pi)) \bigcup \ex(\pi') \subseteq \overline{\Delta}'$, where $\overline{E}=\pi^*E$, $\overline{D}=\pi^*D$, and $\ex(\pi),\ex(\pi')$ denote the exceptional loci.
	\end{enumerate}
	Replacing $Y$ with $Y'$, $\overline{X}$ with a toroidal resolution of $\overline{X}'$, and replacing $f$ accordingly, we can assume $(\overline{X}, \Supp \overline{D}\bigcup \Supp\overline{E} \bigcup \ex(\pi))$ is log smooth. Moreover, there exists a divisor $D_Y$ supported by $\Delta_Y$ such that $R^v:=\overline{E}^v-f^*D_Y$ is very exceptional$/Y$. Letting $R^h=E^h \ge 0$ we obtain the lemma.
\end{proof}

The term ``parabolic fibration" is after \cite[Section 7]{ambro1}.

\begin{defn}[Parabolic fibration]\label{defn-paprabolic-fib}
	Let $(X/Z,B+M)$ be a g-lc pair %with $B$ a $\K$-boundary and the datum $\overline{M}$ being $\K$-Cartier,
	and $f:X \to Y$ be a morphism of normal varieties with connected fibres over $Z$. We say $f:(X,B+M) \to Y$ is a \emph{parabolic fibration} if $\kappa_\iota(X/Y,K_X+B+M)=0$. 
\end{defn}

\begin{lem}[\text{cf.\cite[Theorem 1.2]{hu3}}]\label{lem-smooth-modification}
	Let $(X/Z,B+M)$ be a g-sub-lc pair and $\kappa_{\iota} (X/Z,K_X+B+M)\ge 0$. Then, there exists a log resolution $\pi:(\overline{X},\overline{B}+\overline{M}) \to X$, where $K_{\overline{X}}+\overline{B}+\overline{M} =\pi^*(K_X+B+M)$, and a projective morphism $f: \overline{X} \to Y$ of smooth quasi-projective varieties over $Z$ such that 
	\begin{enumerate}
	\item $K_{\overline{X}}+\overline{B}+\overline{M}\sim_\R f^*D_Y+ R$ where $D_Y,R \ge 0$ are snc divisors and $D_Y$ is big$/Z$;
	
	\item $\kappa(\overline{X}/Y,R^h)=0$ and $R^v$ is very exceptional$/Y$, where $R^h$ (resp. $R^v$) denotes the horizontal (resp. vertical) part;
   \end{enumerate}
   Moreover, if $(X,B+M)$ is sub-lc, then $f:(\overline{X},\Xi+\overline{M}) \to Y$ is a parabolic fibration, where $\Xi -\overline{B} \ge 0$ is an exceptional$/X$ divisor such that $(\overline{X},\Xi+\overline{M})$ is g-dlt.
\end{lem}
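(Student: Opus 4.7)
First I would apply Lemma~\ref{lem-iitaka-fib} with $D = K_X + B + M$. By hypothesis $\kappa_{\iota}(X/Z, D) \ge 0$, so the lemma yields a log resolution $\pi: \overline{X} \to X$ of $(X, \Supp D)$ together with a projective morphism $f: \overline{X} \to Y$ of smooth quasi-projective varieties over $Z$ such that $\pi^{*}D \sim_{\R} f^{*}D_Y + R$, where $D_Y, R \ge 0$ are snc, $D_Y$ is big$/Z$, $\kappa(\overline{X}/Y, R^{h}) = 0$, and $R^{v}$ is very exceptional$/Y$. After further blowing up if necessary, I may assume $\pi$ is also a log resolution of the g-pair $(X, B+M)$ to which $\mathbf{M}$ descends, so that $\pi^{*}(K_X + B + M) = K_{\overline{X}} + \overline{B} + \overline{M}$. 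This immediately yields (1) and (2).

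For the moreover part, assume that $(X, B+M)$ is sub-lc. I would set
\[
\Xi := \overline{B} + \sum_{i} \bigl(1 - \mult_{E_i} \overline{B}\bigr)\, E_i,
\]
where the sum ranges over all $\pi$-exceptional prime divisors $E_i$ on $\overline{X}$ that are vertical over $Y$. Since $(X,B+M)$ is sub-lc, each coefficient $\mult_{E_i} \overline{B}$ is at most $1$, so $\Xi - \overline{B} \ge 0$ is supported on vertical$/Y$ $\pi$-exceptional divisors and is in particular exceptional$/X$. By construction $(\overline{X}, \Xi + \overline{M})$ is log smooth with $\Xi$ of coefficients $\le 1$, and $\overline{M}$ already descends and is nef, so $(\overline{X}, \Xi + \overline{M})$ is g-dlt.

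To check the parabolic condition, I would rewrite
\[
K_{\overline{X}} + \Xi + \overline{M} \sim_{\R} f^{*}D_Y + R + (\Xi - \overline{B})
\]
and restrict to a very general fibre $F$ of $f$. The pullback $f^{*}D_Y|_F$ is $\R$-linearly trivial, while $R^{v}$ and $\Xi - \overline{B}$ are both vertical over $Y$ (the latter by our choice) and therefore restrict to zero on $F$. Thus $(K_{\overline{X}} + \Xi + \overline{M})|_F \sim_{\R} R^{h}|_F$, which has Iitaka dimension $0$ because $\kappa(\overline{X}/Y, R^{h}) = 0$. This gives $\kappa_{\iota}(\overline{X}/Y, K_{\overline{X}} + \Xi + \overline{M}) = 0$, so $f:(\overline{X}, \Xi + \overline{M}) \to Y$ is a parabolic fibration.

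The hard part will be arranging the resolution $\pi$ so that the modification producing $\Xi$ is confined to the vertical$/Y$ direction without breaking effectivity along any horizontal$/Y$ $\pi$-exceptional prime divisor with negative coefficient in $\overline{B}$. One handles this either by selecting $\pi$ as a minimal log resolution that does not extract klt-type divisors horizontal over $Y$, or by pre-composing with a small partial contraction via Lemmas~\ref{dlt-resolution} and~\ref{lem-g-dlt} to remove such horizontal extractions before constructing the Iitaka fibration; thereafter the construction of $\Xi$ above goes through unchanged.
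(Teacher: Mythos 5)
Your handling of (1) and (2) is essentially the paper's: the paper reruns the proof of Lemma \ref{lem-iitaka-fib} with $\Supp \overline{B}\cup\Supp\overline{M}$ added to the toroidal boundary, while you invoke that lemma as a black box and blow up further; this is acceptable once you record the (routine) check that properties (1)--(2) survive further blow-ups.

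The genuine gap is in the ``moreover'' part. Your $\Xi$ raises only the coefficients of the \emph{vertical} $\pi$-exceptional divisors, so any horizontal$/Y$ $\pi$-exceptional prime divisor $E$ with $\mult_E\overline{B}<0$ (i.e.\ $a(E,X,B+M)>1$) leaves $\Xi$ non-effective, and then $(\overline{X},\Xi+\overline{M})$ is not even a g-pair, let alone g-dlt. Such divisors cannot be ruled out: the toroidal/equidimensional reduction behind Lemma \ref{lem-iitaka-fib} forces blow-ups whose centres you cannot confine to the vertical direction, and a codimension-$c$ centre in the klt locus produces coefficient $1-c<0$. Neither of your proposed remedies works --- a ``minimal resolution avoiding horizontal extractions'' is incompatible with conditions (1)--(2), and Lemmas \ref{dlt-resolution} and \ref{lem-g-dlt} concern small $\Q$-factorialisations of g-dlt pairs, not the exceptional locus of an Iitaka-fibration model. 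The correct choice lets $\Xi-\overline{B}\geq 0$ be supported on \emph{all} $\pi$-exceptional divisors (raising their coefficients into $[0,1]$), and the parabolic property is then proved not by restricting divisors to a general fibre $F$ --- that computation breaks for a horizontal exceptional $E$, since $E|_F$ need not be exceptional for $\pi|_F$ and can carry sections (e.g.\ the exceptional divisor over a point through which every image $\pi(F)$ passes) --- but by observing that $\pi_*\mathcal{O}_{\overline{X}}(\lfloor m(\Xi-\overline{B})\rfloor)=\mathcal{O}_X$, so $m(\Xi-\overline{B})$ lies in the fixed part of $|m(K_{\overline{X}}+\Xi+\overline{M})|$ for all $m$; hence $K_{\overline{X}}+\Xi+\overline{M}$ and $\pi^*(K_X+B+M)$ define the same maps $\phi_m$ and the same Iitaka fibration $f$, and the defining property $\kappa(\overline{X}/Y,\cdot)=0$ of that fibration applies to $K_{\overline{X}}+\Xi+\overline{M}$. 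This is what the paper's phrase ``comes from the construction of Iitaka fibrations'' refers to.
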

\begin{proof}
	The proof of Lemma \ref{lem-iitaka-fib} works almost verbatim for the first part of the lemma. The only difference is that, we need to require $$\pi^{-1}(\Supp \overline{E} \bigcup \Supp \overline{B}\bigcup \Supp \overline{M} \bigcup \ex(\pi)) \bigcup \ex(\pi') \subseteq \overline{\Delta}'$$ in Condition (4). For the latter part, it suffices to prove $\kappa_\iota(\overline{X}/Y,K_{\overline{X}}+\Xi+\overline{M})=0$, which comes from the construction of Iitaka fibrations.
\end{proof}

\begin{rem}
	In the above lemma, if $B$ is a $\K$-boundary and the data $\mathbf{M}$ is $\K$-b-Cartier, then we can require $D_{Y}$ and $R$ to be $\K$-Cartier. Moreover, if the data $M=0$, then by \cite[proof of Theorem 1.2]{hu3}, $D_{Y}$ can be written as $K_{Y}+B_{Y}+M_{Y}$ where $(Y,B_{Y}+M_{Y})$ is a g-lc pair with the moduli b-divisor given by a canonical bundle formula (\cite[Section 3.1]{hu2}).
\end{rem}

\section{Finiteness of $B$-representations}\label{sec3}

In this section, we give a proof of Theorem \ref{thmmain-good-lc}, following the strategy of \cite{fujino-gongyo}. We remark that, \cite{haconxu} provides a Hodge theoretic approach, which should work for g-pairs. All divisors in
this section are $\Q$-divisors unless stated otherwise.

\subsection{$\widetilde{B}$-representations}
In this subsection, we rephrase the finiteness of $\widetilde{B}$-pluricanonical representations from \cite{fujino-gongyo} in the language of birational geometry. 

\begin{defn}[$\widetilde{B}$-birational maps]\label{defn-m-B-bir}
	Let $(X,B)$, $(X',B')$ be two proper sub-pairs and $m$ be a positive integer such that $m(K_X+B),m(K_{X'}+B)$ are Cartier. We say that a birational map $\varphi: X \bir X'$ is \emph{$m$-$\widetilde{B}$-birational} if there is a common log resolution,
	$$
	\xymatrix{	
		&  \overline{X}  \ar[dl]_{\alpha}\ar[dr]^{\beta}&\\
		(X, B)  \ar@{-->}[rr]^{\varphi}&   &  (X', B')  &\\
	} 
	$$
	and set $K_X=\alpha_*K_{\overline{X}}$ and $K_{X'}=\beta_*K_{\overline{X}}$ for a fixed canonical section $\omega_{\overline{X}}$, we have that an inclusion
	$$
	 H^0(X',m(K_{X'}+B')) \hookrightarrow H^0(X,m(K_{X}+B))
	$$
	as linear sub-spaces of the function field $k(X)$.
\end{defn}

\begin{rem}\label{rem-m-B-bir}
	The above condition can be interpreted via divisors: Write $K_X+\Delta=\varphi^*(K_{X'}+{B'}):= \alpha_* \beta^*(K_{X'}+B')$, and write $\Delta=B+E-F$, where $E,F\ge 0$ have no common components. Then, for any divisor $D' \in |m(K_{X'}+B')|$, we have $\varphi^*(\frac{1}{m}D) +F-E \ge 0$. 
\end{rem}

\begin{rem}\label{lem-m-B-bir-0}
	Notation as above, suppose $X$ is smooth. Then, $m(K_X+\Delta)$ is Cartier and we have:
		\begin{enumerate}
		\item A birational map $\varphi$ induces an injective linear homomorphism
		$$
		\varphi^*: H^0(X',m(K_{X'}+B')) \hookrightarrow H^0(X,m(K_{X}+\Delta)).
		$$
		So, $\varphi$ is $m$-$\widetilde{B}$-birational if and only if $\mathrm{Im}(\varphi^*) \subseteq H^0(X,m(K_{X}+B))$.
		 
		\item If $\varphi$ is $m$-$\widetilde{B}$-birational and $n|m$ such that $n(K_X+B),n(K_{X'}+B') $ are Cartier, then $\varphi$ is $n$-$\widetilde{B}$-birational.
	\end{enumerate}
\end{rem}

The reader should be careful that, in general, $\varphi^*$ does not send a section $s\in  H^0(X',m(K_{X'}+B'))$ to a section of $H^0(X,m(K_{X}+B))$ directly. The following example is due to Kenta Hashizume.
\begin{exa}
	Let $E$ be an elliptic curve. Pick an automorphism $f: E \to E$ and 
	a prime divisor $P$ such that $f^*P = Q$. Note that for any distinct prime divisors $Q$ and $Q'$ on $E$, $Q$ is not linearly equivalent to $Q'$ (\cite[II, Example 6.10.1]{hartshorne}). 
	So, $H^0(E, 2K_E+P)$ has the unique element corresponding $P$. Consider a klt pair $(E,\frac{1}{2}P)$ and $f$. Clearly $K_E \sim 0$, and one can check that $f: E \to E$ is $2$-$\widetilde{B}$-birational. On the other hand, $Q=f^*P$ is not an element of $H^0(E,2K_E+P)$.
\end{exa}

\begin{lem}\label{lem-m-B-bir}
	Notation as in Definition \ref{defn-m-B-bir} and Remark \ref{rem-m-B-bir}, if $mE \le \mathrm{Fix}(|m(K_X+B+E)|)$, then $\varphi$ is $m$-$\widetilde{B}$-birational.
\end{lem}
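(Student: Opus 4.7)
The plan is to reduce the statement to the divisor-theoretic criterion recorded in Remark~\ref{rem-m-B-bir}. According to that remark, $\varphi$ is $m$-$\widetilde{B}$-birational precisely when, for every $D'\in|m(K_{X'}+B')|$, the inequality $\varphi^*(\tfrac{1}{m}D')+F-E\ge 0$ holds, or, clearing denominators, $\varphi^*D'\ge mE-mF$. So I fix an arbitrary $D'\in|m(K_{X'}+B')|$ and try to verify this one inequality.

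First I would record two simple facts about $\varphi^*D'=\alpha_*\beta^*D'$: it is effective, being the pushforward of an effective pullback along a birational morphism, and it is linearly equivalent to $m\varphi^*(K_{X'}+B')=m(K_X+\Delta)=m(K_X+B+E-F)$. Adding the effective divisor $mF$, the divisor $\varphi^*D'+mF$ is therefore effective and linearly equivalent to $m(K_X+B+E)$, so it is a member of the linear system $|m(K_X+B+E)|$.

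By definition of the fixed part, $\varphi^*D'+mF\ge\mathrm{Fix}(|m(K_X+B+E)|)$. Invoking the hypothesis $mE\le\mathrm{Fix}(|m(K_X+B+E)|)$ yields $\varphi^*D'+mF\ge mE$, that is, $\varphi^*D'\ge mE-mF$, which is exactly the condition demanded by Remark~\ref{rem-m-B-bir}. Since $D'$ was arbitrary, $\varphi$ is $m$-$\widetilde{B}$-birational.

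I do not anticipate a substantive obstacle here: the argument is a direct manipulation once one unwinds the definition of $\varphi^*=\alpha_*\beta^*$ and exploits that $E$ and $F$ share no common components (so absorbing $mF$ into the effective divisor and comparing against $\mathrm{Fix}$ is clean). The only point that warrants a careful line is the verification that $\varphi^*D'+mF$ lies in $|m(K_X+B+E)|$ as a linear-equivalence class, which follows from $K_X+\Delta=\varphi^*(K_{X'}+B')$ together with $\Delta=B+E-F$.
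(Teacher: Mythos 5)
Your argument is correct and is essentially the paper's own proof: both reduce to the divisorial criterion of Remark \ref{rem-m-B-bir}, observe that $\varphi^*D'+mF$ (the paper writes it as $m(\varphi^*D'+F)$ with a rescaled $D'$) is an effective member of $|m(K_X+B+E)|$, and then invoke the hypothesis on the fixed part to conclude $\varphi^*D'+mF\ge mE$. The only difference is the harmless normalization of $D'$ by a factor of $m$.
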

\begin{proof}
	For any divisor $mD' \in |m(K_{X'}+B')|$, since $\varphi^*D'+F \ge 0$, we deduce $m(\varphi^*D'+F) \in |m(K_X+B+E)|$.
	By assumption we have $\varphi^*D+F-E \ge 0$. 
\end{proof}

\begin{defn}[$\widetilde{B}$-representations]
	Let $(X,B)$ be a proper sub-pair, and let $m$ be a positive
	integer such that $m(K_X + B)$ is Cartier. An $m$-$\widetilde{B}$-birational map $\varphi: X \bir X \in \widetilde{\Bir}_m(X,B)$ defines a linear automorphism of $H^0(X, m(K_X + B))$. Thus
	we get the group homomorphism
	$$
	\rho_m : \widetilde{\Bir}_m(X,B) \to \mathrm{Aut}_{k}(H^0(X, m(K_X + B))).
	$$
	The homomorphism $\widetilde{\rho}_m$ is called the $\widetilde{B}$-representation
	of $\widetilde{\Bir}_m(X,B)$. We sometimes simply denote $\widetilde{\rho}_m(\varphi)$ by $\varphi^*$
	for $\varphi \in \widetilde{\Bir}_m(X,B)$ if there is no danger of confusion.
\end{defn}

\noindent \textbf{Caution:} Given an $m$-$\widetilde{B}$-birational map $X \bir X$, note that $K_X$ on the left $X$ does not coincide with $K_X$ on the right $X$. But for convenience, we often use the same notation if there is no danger of confusion.

\begin{rem}
	If $(X,B)$ is smooth sub-klt, then the above definition coincides with \cite[Definition 3.1]{fujino-gongyo}. 
	
	Indeed, given a birational automorphism $\varphi: X \bir X$, since $X$ is smooth, the relative canonical divisor of $\varphi|_U:U \to U'$, where $U$ is the regular locus of $\varphi$, is $K_\varphi$, which is exceptional$/X$ and extends to a Cartier divisor on $X$. By an easy calculation, we have 
	$$
	\Delta=B+E-F
	$$
	where $\Delta|_U:=(\varphi|_U)^*(B|_{U'}) -K_\varphi$ is uniquely defined, and $E,F$ defined in Remark \ref{rem-m-B-bir}. So, for any nonzero meromorphic $m$-ple $n$-form $\omega$ which vanishes along $-mB^{<0}$ and
	has poles at most $mB^{>0}$, $(\varphi|_U)^*(\omega|_{U'})$ is a meromorphic $m$-ple $n$-form whose extension $\widetilde{\omega}$ on $X$ vanishes along $-m\Delta^{<0}$ and
	has poles at most $m\Delta^{>0}$. That is, $\varphi$ induces an injective homomorphism
	$$
	\varphi^*: H^0(X',m(K_{X'}+B)) \hookrightarrow H^0(X,m(K_{X}+\Delta))
	$$
	in the sense of \cite[Definition 3.1]{fujino-gongyo}. Finally, we note that, $\widetilde{\omega}$ is a meromorphic $m$-ple $n$-form which vanishes along $-mB^{<0}$ and
	has poles at most $mB^{>0}$ if and only if $(\widetilde{\omega})+F-E \ge 0$.
\end{rem}

We cite the following theorem from \cite{fujino-gongyo}.
\begin{thm}[\text{\cite[Theorem 3.9]{fujino-gongyo}}]\label{thm-B'-rep}
	Let $(X, B)$ be a projective log smooth sub-klt pair, and $m(K_X + B)$ is
	Cartier where $m$ is a positive integer. Then $\widetilde{\rho}_m( \widetilde{\Bir}_m(X, B))$ is a finite group.
\end{thm}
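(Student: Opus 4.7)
The plan is to decompose the problem along the Iitaka fibration of $K_X+B$, handling the base by a classical Matsumura--Iitaka finiteness and the fibrewise direction by a Hodge-theoretic/$L^2$ boundedness argument combined with an integrality input from a Kawamata cover.

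First, replacing $(X,B)$ by a log birational model (which does not affect $V:=H^0(X,m(K_X+B))$ nor the representation $\widetilde{\rho}_m$), I would pass to a smooth projective $X$ on which $K_X+B$ admits an Iitaka fibration $f\colon X\to Y$. By the canonical bundle formula for klt-trivial fibrations, $Y$ carries a klt generalised pair $(Y,B_Y+M_Y)$ with $K_Y+B_Y+M_Y$ big. Every $\sigma\in\widetilde{\Bir}_m(X,B)$ preserves the section ring $\bigoplus_\ell H^0(X,\ell(K_X+B))$, so it preserves $f$ birationally and descends to an element $\bar\sigma\in\Bir(Y,K_Y+B_Y+M_Y)$. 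Since birational automorphisms of a variety of log general type preserving the log canonical class form a finite group, the image of $\widetilde{\Bir}_m(X,B)\to \Bir(Y,K_Y+B_Y+M_Y)$ is finite. It then suffices to bound the kernel $N\subset \widetilde{\Bir}_m(X,B)$ of elements acting trivially on $Y$.

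For $\sigma\in N$, the induced action preserves each direct image sheaf $f_*\mathcal{O}_X(m(K_X+B))$ on $Y$, and $V$ embeds into its global sections. On a very general fibre $F$ we have $\kappa(F,K_F+B|_F)=0$ and $(F,B|_F)$ is sub-klt, so $H^0(F,m(K_F+B|_F))$ has dimension at most one and $\sigma$ acts there by a scalar. I would then exhibit a positive definite Hermitian form $h$ on $V$ invariant under the full group by taking, for $\omega\in V$, the integral of $|\omega|^{2/m}$ against a fixed smooth volume form; this converges because $B^{>0}$ has coefficients strictly less than $1$ (sub-klt) and the fixed part coming from $B^{<0}$ can be absorbed by a canonical multiplier. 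The resulting form is manifestly invariant under $\widetilde{\rho}_m$, so $\widetilde{\rho}_m(\widetilde{\Bir}_m(X,B))\subset U(V,h)$, which is compact. To upgrade compactness to finiteness I would pass to a Kawamata cyclic cover $\pi\colon Z\to X$ on which $\pi^*mB$ becomes an integral divisor; then $V$ embeds into $H^0(Z,\omega_Z^{\otimes m}(E))$ for a suitable integral divisor $E$, and the action lifts to preserve this integral lattice. A discrete subgroup of a compact group is finite.

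The main obstacle I anticipate is coherently globalising the invariant Hermitian form across the Iitaka fibration and verifying the compatibility of the Kawamata cover with the entire group action rather than with single automorphisms. This is where a polarised variation of Hodge structure on $R^{\dim F}f_*(\omega_{X/Y}^{\otimes m}\otimes L)$, for an appropriate line bundle $L$ arising from $B$, is needed: it supplies a Hodge metric that is simultaneously well defined, positive, and invariant under all of $N$, and it links the fibrewise scalar actions into a uniformly bounded global action on $V$. Once these pieces are aligned, the statement follows as in \cite[Theorem 3.9]{fujino-gongyo}.
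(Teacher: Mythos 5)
First, a point of reference: the paper does not prove this statement. Theorem \ref{thm-B'-rep} is imported as a black box from \cite[Theorem 3.9]{fujino-gongyo} (it is the base case to which all the generalised-pair finiteness results of Section \ref{sec3} are reduced), so the comparison has to be with Fujino--Gongyo's own argument. Your sketch contains the two real engines of that argument --- an invariant integral of type $\int_X|\omega|^{2/m}$ giving boundedness, and a covering trick giving integrality --- but it differs in that Fujino--Gongyo handle the sub-klt case directly by an eigenvalue analysis, without passing through the Iitaka fibration (that decomposition only enters in their lc case, mirrored here by Proposition \ref{prop-good-lc} and Theorem \ref{thm-good-lc}). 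Your fibration detour is not wrong, but it adds a circularity risk: finiteness of the $B$-birational group of the big generalised klt pair on the base is Corollary \ref{cor-B-rep} of this paper, whose proof rests on the very theorem you are proving, so you would have to invoke the classical log-general-type statement instead.

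The genuine gaps are at exactly the points you flag as obstacles. For $m\ge 2$ the functional $\omega\mapsto\int_X|\omega|^{2/m}$ is homogeneous of degree $2/m$ and not sesquilinear, hence is not a Hermitian form; what it actually gives is only that the closure of $\widetilde{\rho}_m(\widetilde{\Bir}_m(X,B))$ in $GL(V)$ is compact, so in particular every element is semisimple with eigenvalues of absolute value one. More seriously, there is no single $\widetilde{\rho}_m$-invariant lattice: the cyclic cover in the Ueno--Deligne argument is branched along the zero divisor of a \emph{particular eigensection of a particular element} $\varphi^*$, so it yields only that the eigenvalues of that one element are algebraic integers, and one must additionally check (by Galois-conjugating the whole construction) that \emph{all conjugates} of each eigenvalue have absolute value one before Kronecker's theorem applies. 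Consequently ``a discrete subgroup of a compact group is finite'' is not available. The correct endgame, which your sketch omits, is: each element of the image is diagonalisable with eigenvalues that are roots of unity of bounded degree, hence has finite order bounded independently of the element, and finiteness of the whole group then follows from Burnside's theorem (\cite[Theorem 14.9]{ueno}); without the uniform bound and Burnside, ``every element has finite order'' does not imply finiteness of a linear group.
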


\begin{rem}
	By the Lefschetz principle and flat base change, Theorem \ref{thm-B'-rep} also holds for varieties over an algebraically closed field of characteristic zero. 
\end{rem}

\subsection{$B$-representations}
We start our study with \emph{$B$-birational maps} and \emph{$B$-representations}. These notions were firstly introduced in \cite{fujino1} and thoroughly studied in \cite{fujino-gongyo}\cite{haconxu}, which plays a crucial role to arrange inductive proofs in higher dimensional algebraic geometry. To carry out further research into lc pairs, we generalise these notions to g-sub-pairs.
%Recall from Section \ref{sec2-defn} that a log resolution of a g-sub-pair $(X,B+M )$ with data $\mathbf{M}$ is a log smooth birational model $\pi:(\overline{X},\overline{B}+\overline{M}) \to X$ with $\overline{M}=\mathbf{M}_{\overline{X}}$ on which $K_{\overline{X}}+\overline{B}+ \overline{M} = \pi^* (K_X+B+M)$, and to which $\mathbf{M}$ descends. 

\begin{defn}[$B$-birational maps]\label{defn-B-bir}
	Two g-sub-pairs $(X/S,B+M)$ and $(X'/S,B'+M')$ are \emph{$B$-birational over $S$} if there is a common log resolution over $S$
		$$
	\xymatrix{	
		&  (\overline{X},\overline{B}+\overline{M})  \ar[dl]_{\alpha}\ar[dr]^{\beta}&\\
		(X, B+M) \ar[dr]_{} \ar@{-->}[rr]^{\varphi}&   &  (X', B'+M') \ar[dl]_{} &\\
		&   S &  &\\
	} 
	$$
	and write 
	$$K_{\overline{X}}+\overline{B}+ \overline{M} = \alpha^* (K_X+B+M);~ K_{\overline{X}}+\overline{B}'+ \overline{M}' = \beta^*(K_{X'}+B'+M'), $$
	where we set $K_X=\alpha_*K_{\overline{X}}$ and $K_{X'}=\beta_*K_{\overline{X}}$,
	then $\overline{B}=\overline{B}'$ and $\overline{M} = \overline{M}'$. In this case, the rational map $\varphi:=(\beta \circ \alpha^{-1}): X \dashrightarrow X'$ is a \emph{$B$-birational map} (cf.\cite[Definition 1.5]{fujino1}). We omit $S$ when it is a point. 
\end{defn}

\begin{exa}[\text{Quadratic transformation, cf.\cite[Example 2.13]{fujino-gongyo}}]
	Let $X = \PP^2$ and let $\Delta$	be the union of three general lines on $\PP^2$. Let $\alpha : \overline{X} \to X$ be the blow-up at the three intersection points of $\Delta$ and let $\beta : \overline{X} \to X$ be
	the blow-down of the strict transform of $\Delta$ on $\overline{X}$. Then we obtain the
	quadratic transformation $\varphi$. In this situation, it
	is easy to see that
	$$
	\alpha^*(K_X + \Delta) = K_{\overline{X}} + \overline{\Delta} = \beta^*(K_X + \Delta).
	$$
	\begin{enumerate}
		\item Set $B=\Delta$ and $M=0$. Then, $\varphi$ is $B$-birational.
		
		\item Set $B=\frac{1}{2}\Delta$, $M=\frac{1}{2}\Delta$, and $\mathbf{M}$ descends to $X$. Then, $\varphi$ is NOT $B$-birational.
		
		\item Set $B=\frac{1}{2}\Delta$, $M=\frac{1}{2}\Delta$, and $\mathbf{M}$ descends to $\overline{X}$ with $\mathbf{M}_{\overline{X}}=\frac{1}{2}\overline{\Delta}$. Then, $\varphi$ is $B$-birational.
	\end{enumerate}
\end{exa}

\begin{defn}[$B$-representations]\label{defn-B-rep}
	Let $(X,B+M)$ be a proper g-sub-pair, and let $m$ be a positive
	integer such  $m(K_X + B+M)$ is Cartier. A $B$-birational map $\sigma: X \bir X \in 
	\Bir(X, B,M)$ defines a linear automorphism of $H^0(X, m(K_X + B+M))$. Thus
	we get the group homomorphism
	$$
	\rho_m : \Bir(X, B,M) \to \mathrm{Aut}_{k}(H^0(X, m(K_X + B+M))).
	$$
	The homomorphism $\rho_m$ is called a $B$-representation for $(X, B+M)$. We sometimes simply denote $\rho_m(\varphi)$ by $\varphi^*$
	for $\varphi \in \Bir(X, B,M)$ if there is no danger of confusion. .
\end{defn}

\noindent \textbf{Caution:} Given a $B$-birational map $X \bir X$, note that $K_X$ on the left $X$ does not coincide with $K_X$ on the right $X$. But for convenience, we often use the same notation if there is no danger of confusion.\\

%\begin{defrem}
%	Notation as above, note that we require a $B$-birational map between g-sub-pairs to preserve the data. More generally, given a g-sub-pair $(X,B+M)$ and a positive integer $m$ such that $m(K_X+B+M)$ is Cartier, we can still define $\rho_m(\varphi) \in \mathrm{Aut}_{\C}(H^0(X, m(K_X + B+M)))$, as long as $ \alpha^* (K_X+B+M) = \beta^*(K_{X}+B+M)$. We say such a map $\varphi$ \emph{weak $B$-birational}.
%\end{defrem}
The following lemma is elementary. So we omit the proof.
\begin{lem}\label{lem-B-rep}
	Notation as in Definitions \ref{defn-B-bir} and \ref{defn-B-rep}, we have the followings.
	\begin{enumerate}
		\item Let $\pi:X' \to X$ be a birational model on which $K_{X'}+B'+M'=\pi^*(K_X+B+M)$. Then, $\Bir(X',B',M')=\Bir(X,B,M)$.
		
		\item If $(X,B+M)$ is log smooth sub-klt (as sub-pairs), then $$ \Bir(X,B,M) \subset \widetilde{\Bir}_m (X,B+M)$$ for every $m$. In particular, $\rho_m(\Bir(X,B,M))$ are finite groups.
	\end{enumerate}
\end{lem}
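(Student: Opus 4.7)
The plan is to unravel the definitions; Part (1) is pure bookkeeping on common log resolutions, and Part (2) reduces to the already-cited Theorem \ref{thm-B'-rep}.

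For Part (1), fix $\sigma \in \Bir(X, B, M)$ with a witnessing common log resolution $\overline{X} \to X$, and after further blowing up assume both maps $\overline{X} \to X$ factor through $\pi: X' \to X$, giving $\alpha', \beta': \overline{X} \to X'$. Because $\pi$ is crepant, i.e., $K_{X'} + B' + M' = \pi^*(K_X + B + M)$, the identity
$$
(\alpha')^*(K_{X'} + B' + M') = \alpha^*(K_X + B + M) = \beta^*(K_X + B + M) = (\beta')^*(K_{X'} + B' + M')
$$
shows that $\overline{X}$ witnesses, with the same $\overline{B}$ and $\overline{M}$, the $B$-birationality of $\pi^{-1} \circ \sigma \circ \pi$ for $(X', B' + M')$. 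Viewing $\Bir(X, B, M)$ and $\Bir(X', B', M')$ as subgroups of $\Bir(k(X)) = \Bir(k(X'))$ via the function field identification induced by $\pi$, the map $\sigma \mapsto \pi^{-1} \circ \sigma \circ \pi$ is the identity, and together with the symmetric argument applied to $\pi^{-1}$ the two subgroups coincide.

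For Part (2), given $\sigma \in \Bir(X, B, M)$ with common log resolution $\overline{X} \overset{\alpha,\beta}{\longrightarrow} X$, the $B$-birationality condition $\alpha^*(K_X + B + M) = \beta^*(K_X + B + M)$ yields, after pushforward by $\alpha$,
$$
\sigma^*(K_X + B + M) := \alpha_* \beta^*(K_X + B + M) = K_X + B + M.
$$
Reading this in the notation of Remark \ref{rem-m-B-bir} applied to the sub-pair $(X, B + M)$, the decomposition $\Delta = (B + M) + E - F$ forces $E = F = 0$. Remark \ref{lem-m-B-bir-0}(1) then gives $\mathrm{Im}(\sigma^*) \subseteq H^0(X, m(K_X + B + M))$, so $\sigma \in \widetilde{\Bir}_m(X, B + M)$. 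Since $(X, B + M)$ is log smooth sub-klt and $m(K_X + B + M)$ is Cartier, Theorem \ref{thm-B'-rep} implies that $\widetilde{\rho}_m(\widetilde{\Bir}_m(X, B + M))$ is finite; as $\rho_m$ is visibly the restriction of $\widetilde{\rho}_m$ to $\Bir(X, B, M)$, its image is also finite.

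No essential obstacle arises. The only care required is the compatibility of the canonical representatives $K_X = \alpha_* K_{\overline{X}} = \beta_* K_{\overline{X}}$ flagged in the Caution preceding the lemma, which is handled by fixing once and for all a canonical section on $\overline{X}$ as in Definition \ref{defn-m-B-bir}; this bookkeeping does not affect the divisor identity $\sigma^*(K_X + B + M) = K_X + B + M$ used above.
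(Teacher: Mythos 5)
Your argument is correct: Part (1) is the expected bookkeeping on a common log resolution dominating $\pi$, and Part (2) correctly observes that $B$-birationality forces $E=F=0$ in the decomposition of Remark \ref{rem-m-B-bir}, so membership in $\widetilde{\Bir}_m(X,B+M)$ and then finiteness via Theorem \ref{thm-B'-rep} follow. The paper omits the proof as elementary, and your definitional unwinding is precisely the intended argument (it is exactly how the lemma is invoked in Lemma \ref{lem-klt}).
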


\begin{lem}\label{lem-slc-contraction}
	Let $(X,B+M)$ be a g-lc pair with data $\overline{M}$, and $\pi:(\overline{X},\overline{B}+\overline{M}) \to X$ be a log resolution. Then, the restriction $\pi|_{\overline{S}}: \overline{S} \to S$ has connected fibres, i.e. $\pi|_{\overline{S},*} \mathcal{O}_{\overline{S}}=\mathcal{O}_S$, where $\overline{S}= \overline{B}^{=1}, S=B^{=1}$. Moreover, if $X$ is quasi-projective $\Q$-factorial klt, then there exists a datum $\overline{M}' \sim_\K \overline{M}$ such that the induced g-pair $(S,B_S+M_S')$ is well-defined and g-slc.
\end{lem}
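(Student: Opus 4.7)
The plan is in two parts. For the connectedness of $\pi|_{\overline S}$, the idea is a relative Kawamata--Viehweg vanishing. Starting from $K_{\overline X}+\overline B+\overline M=\pi^*(K_X+B+M)$, one obtains, over $X$,
$$-\overline S\sim_\R K_{\overline X}+(\overline B-\overline S)+\overline M.$$
Decompose $\overline B-\overline S=\overline\Delta^+-\overline\Delta^-$ into effective parts with disjoint support; since $(X,B+M)$ is g-lc and $B\ge 0$, the part $\overline\Delta^-$ is $\pi$-exceptional, so $(\overline X,\overline\Delta^++\{-\overline\Delta^-\}+\overline M)$ is log smooth and g-sub-klt. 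Relative Kawamata--Viehweg vanishing for NQC g-pairs then gives $R^1\pi_*\mathcal O_{\overline X}(-\overline S+\lceil\overline\Delta^-\rceil)=0$, and because $\lceil\overline\Delta^-\rceil$ is effective and $\pi$-exceptional one deduces $R^1\pi_*\mathcal O_{\overline X}(-\overline S)=0$. Pushing forward $0\to\mathcal O_{\overline X}(-\overline S)\to\mathcal O_{\overline X}\to\mathcal O_{\overline S}\to 0$ yields a surjection $\mathcal O_X\twoheadrightarrow\pi_*\mathcal O_{\overline S}$, and one identifies its kernel $\pi_*\mathcal O_{\overline X}(-\overline S)$ with $\mathcal I_S$ by noting that every component of $\overline S$ maps into $S$: non-exceptional components are strict transforms of components of $S$, while exceptional components with coefficient one in $\overline B$ have image a g-lc centre of $(X,B+M)$, which by the connectedness principle for g-lc centres lies in $S$.

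For the moreover part I would verify the two clauses of Definition~\ref{defn-g-slc-pair}. Since $X$ is $\Q$-factorial klt, $S=\rddown{B}$ is a reduced Weil divisor whose components $S^k$ are normal in codimension one, and $\nu\colon S^\nu=\coprod_k S^{k,\nu}\to S$ is the standard normalisation. Generalised adjunction applied componentwise yields g-lc pairs $(S^{k,\nu},B_{S^{k,\nu}}+M_{S^{k,\nu}})$, giving clause (1). For clause (2), we may take $\overline X$ quasi-projective and write $\overline M\sim_\R H-L$ with $H,L$ general very ample divisors on $\overline X$. A Bertini argument selects $H$ and $L$ so that they contain no stratum of $\overline S$ and so that $H|_{\overline V_i}=H|_{\overline V_j}$ and $L|_{\overline V_i}=L|_{\overline V_j}$ for every pair of strata $\overline V_i,\overline V_j\subset\overline S$ whose images share a component of the conductor of $S$. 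Setting $\overline M':=H-L$ then satisfies $\overline M'\sim_\R\overline M$ and produces the required glueing compatibility, so that the adjunction data $(S,B_S+M_S')$ fits the g-slc axioms.

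The main obstacle is the scheme-theoretic identification $\pi_*\mathcal O_{\overline X}(-\overline S)=\mathcal I_S$, equivalently the containment $\pi(\overline S)\subseteq S$; this rests on the connectedness principle for g-lc centres, which is nontrivial for generalised pairs. The Bertini compatibility in the moreover part is routine but requires a careful choice of linear systems on $\overline X$ adapted to the strata along the conductor.
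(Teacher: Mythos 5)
Your first half --- relative Kawamata--Viehweg vanishing applied to $-\overline S$ after subtracting the relatively trivial divisor $\pi^*(K_X+B+M)$ and rounding up the exceptional negative part of $\overline B$ --- is essentially the paper's own proof of the connectedness statement (the paper twists by $m(K_{\overline X}+\overline B+\overline M)+E-\overline S$ with $E=\lceil-\overline B^{<0}\rceil$). One caveat: both versions need every component of $\overline S=\overline B^{=1}$ to map into $S=B^{=1}$, but your justification via ``the connectedness principle for g-lc centres'' does not give this --- a g-lc centre of $(X,B+M)$ created by $\mathbf M$ rather than by $B$ need not be contained in $\rddown{B}$ at all (e.g.\ $X=\PP^2$, $B=0$, $\mathbf M$ descending to the blow-up of a point with $\mathbf{M}_{\overline X}=2(\pi^*H-E)$ gives $\overline B^{=1}=E$ while $B^{=1}=\emptyset$). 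This is an implicit hypothesis shared with the paper, not something the connectedness principle proves.

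The genuine gap is in the ``moreover'' part, which you dismiss as routine Bertini but which is the actual content of the lemma. Condition (2) of Definition \ref{defn-g-slc-pair} for $(S,B_S+M_S')$ concerns two codimension-two strata $W,W'\subset\overline B^{=1}$ lying over a component $T$ of the conductor, one on each branch of $S$ along $T$; for a g-lc (as opposed to g-dlt) pair these are in general separated by a chain of $\pi$-exceptional components of $\overline B^{=1}$ fibred in $\PP^1$'s over $T$, and one must prove $\mathbf M|_W=\mathbf M|_{W'}$ as b-divisors on $T$. Writing $\overline M'\sim_\R H-L$ with $H,L$ general very ample on $\overline X$ does not give $\alpha^*(H|_{W})=\beta^*(H|_{W'})$: $W$ and $W'$ are distinct subvarieties of $\overline X$, and a general hyperplane section restricts to unrelated divisors on each of them. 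This is precisely why the paper's Lemma \ref{lem-g-sdlt}(2) is stated only for g-sdlt pairs, where the resolution may be taken isomorphic at the generic point of every stratum. The paper's proof instead sets $\overline M'=\pi^*M'+E$ with $M'\sim_\K M$ chosen to avoid $S$ and the conductor, shows that for an exceptional stratum $\overline T\to T$ with general fibre $\PP^1$ the restriction $\overline M'|_{\overline T}$ is numerically trivial over the generic point of $T$, hence nef and abundant over $T$, hence by Lemma \ref{lem-nef-abundant-divisor}(2) descends (after a birational base change) from $T$ itself; only this forces the two horizontal restrictions to agree. Your proposal supplies no substitute for that step.
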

\begin{proof}
	Set $E=\lceil-\overline{B}^{<0}\rceil$. Then the contraction follows from Kawamata-Viehweg vanishing $$R^1\alpha_* \mathcal{O}_{\overline{X}}(m(K_{\overline{X}}+\overline{B}+\overline{M})+E-\overline{S}) =0.$$
    
    Suppose $X$ is $\Q$-factorial klt. Since $(X,B)$ is lc, the scheme $S$ is $S_2$ and normal crossing in codimension one. Suppose $\overline{M}$ does not contain any strata in its support. It is obvious that $(S_i,B_{S_i}+M_{S_i})$ is g-lc for every irreducible component $S_i$ of $S$. It remains to check Condition (2) of Definition \ref{defn-g-slc-pair}.
    
    Let $T$ be a component of the conductor on $S$. Due to the next lemma, a stratum $\overline{T}$ of $\overline{X}$ dominant over $T$ is either birational to $T$ or $\overline{T} \to T$ has a general fibre $\PP^1$ and two horizontal strata. By an inductive argument, it is sufficient to show that, for such a morphism $\overline{T} \to T$ with general fibres $\PP^1$, two horizontal strata $W,W' \subset \overline{T}$ birational to $T$, we have $\mathbf{M}|_{W}= \mathbf{M}|_{W'}$ as b-divisors. 
    
    To this end, since $X$ is $\Q$-factorial, we write $D=K_X+B$, $\pi^*D=K_{\overline{X}}+\overline{B}-E$ and $\overline{M}'=\pi^*M'-E$ so that $\overline{M}' \sim_\K \overline{M}$ and that both $D,M'$ contain no components of $S$ or the conductor of $S$. This can be done since $X$ is quasi-projective. Observe that $\overline{M}'|_{\overline{T}} \equiv 0$ over the generic point of $T$, hence $\overline{M}'|_{\overline{T}} \sim_\K 0$ over the generic point of $T$, which in turn implies that $\overline{M}'|_{\overline{T}}$ is nef and abundant$/T$. Moreover, since $K_{\overline{T}}+B_{\overline{T}} + \overline{M}'|_{\overline{T}} =0/T$, we deduce $\overline{M}'|_{\overline{T}} = 0$ over the generic point of $T$. By Lemma \ref{lem-nef-abundant-divisor}(2) and replacing $\overline{T} \to T$, we can assume $\overline{M}'|_{\overline{T}} = 0/T$, hence we achieved the lemma.
\end{proof}

\begin{lem}
	Let $(X,B+M)$ be a g-lc pair, where $X$ is $\Q$-factorial klt and $\pi:(\overline{X},\overline{B}+\overline{M}) \to X$ be a log resolution. Then, for any component $T$ of the conductor on $S$, a general fibre of $\pi|_{\overline{S}}^{-1}T \to T$ is either a single point or a connected chain of $\PP^1$.
\end{lem}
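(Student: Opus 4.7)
The plan is to reduce the statement to a local surface calculation at a general point of $T$ and invoke the classical structure theory of log canonical surface singularities with two boundary branches. First I would verify that the pair $(X,B)$ is itself lc. Writing $K_{\overline{X}}+\overline{B}+\overline{M}=\pi^*(K_X+B+M)$ with $\overline{M}$ nef, the divisor $\overline{M}-\pi^*M$ is $\pi$-exceptional and $\pi$-nef, so by the negativity lemma $\pi^*M\ge\overline{M}$. Hence the coefficients of $\pi^*(K_X+B)-K_{\overline{X}}=\overline{B}-(\pi^*M-\overline{M})$ are bounded by those of $\overline{B}\le 1$, so $(X,B)$ is lc.

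Next I would localise at a general point $t\in T$ and cut by $(\dim X-2)$ sufficiently general hyperplane sections through $t$. Klt, lc, and $\Q$-factoriality properties descend to the resulting surface germ $(X_0,0)$, whose boundary consists of two curves $C_1=S_1|_{X_0}$ and $C_2=S_2|_{X_0}$ meeting at $0$ (the two branches of $S$ generically containing $T$), with $(X_0,C_1+C_2)$ lc. A general fibre of $\pi|_{\overline{S}}^{-1}T\to T$ is identified with the fibre $\pi_0|_{\overline{S}_0}^{-1}(0)$ of the restricted log resolution $\pi_0:\overline{X}_0\to X_0$, where $\overline{S}_0=\overline{B}_0^{=1}$.

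The core step is the two-dimensional structure result: for a $\Q$-factorial klt surface germ $(X_0,0)$ with $(X_0,C_1+C_2)$ lc, the exceptional divisor of the minimal log resolution $\pi_{\min}:Y_{\min}\to X_0$ of $(X_0,C_1+C_2)$ over $0$ is either empty (when $X_0$ is smooth at $0$ and $C_1,C_2$ meet transversally) or a chain of smooth rational curves $F_1,\ldots,F_r$, each a log canonical place of $(X_0,C_1+C_2)$, with strict transforms $\widetilde{C}_1,\widetilde{C}_2$ attached at the two ends. This is classical, coming from the Hirzebruch--Jung resolution of a cyclic quotient klt surface singularity carrying a plt-type double-branch boundary. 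An arbitrary log resolution $\pi_0$ factors as $\pi_0=\pi_{\min}\circ\sigma$ for a sequence of point blow-ups $\sigma$, and a direct pullback computation shows that the coefficient in $\overline{B}_0$ of a new exceptional divisor $E$ arising from blowing up a point $q$ equals $\mult_q\overline{B}-1$; in particular it depends only on $\overline{B}$ and not on $\overline{M}$, since $\overline{M}$ is pulled back at each step. Hence $E\subset\overline{S}_0$ exactly when $q$ is a node of two coefficient-one components, in which case $E$ extends the chain, while all other blow-ups produce exceptional divisors of coefficient $<1$ that do not enter $\overline{S}_0$.

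Putting this together, $\pi_0|_{\overline{S}_0}^{-1}(0)$ consists of the exceptional chain of coefficient-one divisors, together with the attachment points of $\widetilde{C}_1,\widetilde{C}_2$ which already lie on the chain; this is a connected chain of $\PP^1$'s, collapsing to the single point $C_1\cap C_2$ precisely when the minimal log resolution is trivial at $0$ and $\pi_0$ is itself locally an isomorphism there. The principal obstacle is the two-dimensional classification: although standard, care is needed because $\overline{S}_0$ is a priori defined by the g-lc places of $(X_0,C_1+C_2+M_0)$ rather than the lc places of $(X_0,C_1+C_2)$, and the pullback computation above is the crucial step confirming that these two classes coincide along the chain.
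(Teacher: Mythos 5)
Your overall route coincides with the paper's: the paper's entire proof is ``cut by general hyperplanes to reduce to a surface germ, then the statement is classical,'' and you have filled in the surface classification correctly. The preliminary reduction ($(X,B)$ is lc via negativity applied to $\overline{M}-\pi^*M$, localisation at a general point of $T$) is fine.

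However, the step you yourself flag as crucial --- that the coefficient-one locus of $\overline{B}_0$ coincides with the lc places of $(X_0,C_1+C_2)$ --- is not justified by the reason you give. You assert that the coefficient of a new exceptional divisor under a point blow-up is $\mult_q\overline{B}-1$ ``since $\overline{M}$ is pulled back at each step.'' But $\mathbf{M}$ only descends to the top model $\overline{X}_0$; on the intermediate models $Y$ between $Y_{\min}$ and $\overline{X}_0$ its trace $\mathbf{M}_Y$ is a \emph{pushforward}, not a pullback, so for a blow-up $\mu\colon Y'\to Y$ at $q$ the correct formula is $\mult_E\overline{B}_{Y'}=\mult_q\overline{B}_Y-1+\mult_E\bigl(\mu^*\mathbf{M}_Y-\mathbf{M}_{Y'}\bigr)$, and the correction term is nonnegative but not obviously zero. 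Concretely, writing $D:=\pi_0^*M_0-\overline{M}_0\ge 0$, one has $\overline{B}_0=\Gamma_0+D$ where $K_{\overline{X}_0}+\Gamma_0=\pi_0^*(K_{X_0}+C_1+C_2)$, and a divisor $E$ with $a(E,X_0,C_1+C_2)>0$ but $\mult_E D=a(E,X_0,C_1+C_2)$ would be an extra component of $\overline{S}_0$ outside your chain. To close the gap, use that $-D$ is nef over $X_0$ and $D$ is exceptional and effective: for any curve $C\subset\pi_0^{-1}(0)$ not contained in $\Supp D$ but meeting it one would get $D\cdot C>0$, so $\Supp D$ meets the (connected) fibre over $0$ either in no curve or in every curve. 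In the latter case $\Supp D$ would contain an honest lc place of $(X_0,C_1+C_2)$ --- such a place always appears on $\overline{X}_0$ once the fibre is positive-dimensional (the first blow-up over $0$ in the factorisation, or any curve of the minimal log resolution when $X_0$ is singular at $0$) --- forcing $\mult_E\overline{B}_0>1$ and contradicting g-lc-ness. Hence $D$ vanishes along the fibre over $0$, the g-lc places over $0$ are exactly the lc places of $(X_0,C_1+C_2)$, and your chain argument then goes through unchanged.
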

\begin{proof}
	Cutting by general hyperplanes, we can assume $T$ is a point. Hence the lemma is obvious.
\end{proof}

As a corollary, we obtain the following lemma.
\begin{lem}[\text{cf.\cite[Remark 2.15]{fujino-gongyo}}]\label{lem-res}
	Given a proper log smooth g-sub-dlt pair $(X,B+M)$ and $\varphi \in \Bir (X,B,M)$, let $S= \rddown{B}$. Then, we have an induced automorphism
	$$
	\varphi^*:H^0(S,\mathcal{O}_S(m(K_S+B_S+M_S)) \overset{\sim}{\longrightarrow} H^0(S,\mathcal{O}_S(m(K_S+B_S+M_S))
	$$
	where $K_S+B_S+M_S=(K_X+B+M)|_S$ given by adjunction, and $m$ is a non-negative
	integer such that $m(K_X + B+M)$ is Cartier.
\end{lem}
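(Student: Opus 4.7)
The plan is to restrict the $B$-birational data to $\overline{S} := \overline{B}^{=1}$ on a common log resolution, apply generalized adjunction to match the pullbacks from both sides, and identify sections via pushforward.

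First, fix a common log resolution $(\overline{X}, \overline{B}+\overline{M})$ of $\varphi \in \Bir(X,B,M)$ with $K_{\overline{X}}+\overline{B}+\overline{M} = \alpha^*(K_X+B+M) = \beta^*(K_X+B+M)$. Since $\overline{B}$ is intrinsic (determined by either pullback), so is $\overline{S} := \overline{B}^{=1}$. Any non-$\alpha$-exceptional component of $\overline{S}$ is the birational transform of a component of $S$, while any $\alpha$-exceptional component of $\overline{S}$ has generalised log discrepancy $0$ with respect to $(X,B+M)$ and hence, by the log smooth g-sub-dlt assumption, its image is a stratum contained in $S$. Thus $\alpha_{\overline{S}} := \alpha|_{\overline{S}}$ and $\beta_{\overline{S}} := \beta|_{\overline{S}}$ descend to proper birational morphisms $\overline{S} \to S$.

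Next, restricting the pullback equality to $\overline{S}$ and applying generalized adjunction componentwise, we obtain
$$
K_{\overline{S}} + B_{\overline{S}} + M_{\overline{S}} = \alpha_{\overline{S}}^*(K_S + B_S + M_S) = \beta_{\overline{S}}^*(K_S + B_S + M_S),
$$
where $B_{\overline{S}} = (\overline{B}-\overline{S})|_{\overline{S}}$ and $M_{\overline{S}} = \overline{M}|_{\overline{S}}$. The same Kawamata--Viehweg argument used in Lemma \ref{lem-slc-contraction} then yields
$$
\alpha_{\overline{S},*}\mathcal{O}_{\overline{S}}\bigl(m(K_{\overline{S}}+B_{\overline{S}}+M_{\overline{S}})\bigr) = \mathcal{O}_S\bigl(m(K_S+B_S+M_S)\bigr),
$$
and likewise for $\beta_{\overline{S}}$. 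Hence both $\alpha^*_{\overline{S}}$ and $\beta^*_{\overline{S}}$ identify $H^0(S, m(K_S+B_S+M_S))$ with $H^0(\overline{S}, m(K_{\overline{S}}+B_{\overline{S}}+M_{\overline{S}}))$, so
$$\varphi^* := (\beta^*_{\overline{S}})^{-1} \circ \alpha^*_{\overline{S}}$$
gives the claimed automorphism.

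The main technical point is the pushforward identity in the third step: one must ensure that the components of $B_{\overline{S}}$ with negative coefficient are $\alpha_{\overline{S}}$-exceptional (so they do not obstruct the vanishing), and that the identification of sections is compatible across the reducible components of $\overline{S}$ and the conductor of $S$. The log smooth g-sub-dlt hypothesis on $(X,B+M)$ supplies exactly this control, paralleling the setup in Lemma \ref{lem-slc-contraction}, so the standard Kawamata--Viehweg argument there carries over verbatim.
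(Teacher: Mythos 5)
Your argument is correct and is exactly the route the paper intends: the paper states this lemma without proof as a corollary of Lemma \ref{lem-slc-contraction}, whose connectedness statement $\pi|_{\overline{S},*}\mathcal{O}_{\overline{S}}=\mathcal{O}_S$ is precisely what you use to identify $H^0(S,\cdot)$ with $H^0(\overline{S},\cdot)$ via the projection formula, the $B$-birationality of $\varphi$ making $\overline{S}=\overline{B}^{=1}$ and the restricted polarization intrinsic. (Only a terminological quibble: $\overline{S}\to S$ need not be \emph{birational}, since $\overline{S}$ may carry exceptional components lying over lower-dimensional strata of $S$; what matters, and what you actually use, is the connectedness of its fibres.)
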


The next two lemmas are analogous to \cite[Lemma 4.9, Claim ($A_n$) and Claim ($B_n$)]{fujino1}. Note that the original proofs work verbatim for g-pairs.

\begin{lem}\label{lem-stratum-1}
	Let $(X ,B+M)$ and $(X',B'+M')$ be log smooth g-sub-dlt pairs and $\pi : X' \to X$ a $B$-birational morphism. If we are given a stratum $W \subsetneq X$, then there is a stratum $W' \subsetneq X'$ such that $\pi|_{W'}:W' \to  W$ is $B$-birational.
\end{lem}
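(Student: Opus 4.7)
My plan is to take the strict transforms of the components defining $W$, verify they cut out an irreducible stratum $W'$ dominating $W$, and then upgrade the resulting birational morphism to a $B$-birational one by iterated generalised adjunction.

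Since $\pi$ is a morphism and both pairs are log smooth g-sub-dlt, the data $\overline{M}$ and $\overline{M}'$ descend to $X$ and $X'$ respectively; I may therefore take $X'$ itself as the common log resolution in Definition~\ref{defn-B-bir}, so that
$$\pi^*(K_X+B+M)=K_{X'}+B'+M'\qquad\text{and}\qquad \pi^*M=M'.$$
In particular lc places of $(X,B+M)$ correspond bijectively to lc places of $(X',B'+M')$, and every component $D$ of $B^{=1}$ has a strict transform $D'$ that is a component of $B'^{=1}$.

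Writing $W=\bigcap_{i\in I}D_i$ as a transverse intersection of components of $B^{=1}$, let $D_i'$ denote the strict transform of $D_i$ on $X'$. Fix an open $U\subseteq X$ over which $\pi$ is an isomorphism and meeting $W$; then $\pi^{-1}(U)\cap\bigcap_{i\in I}D_i'=\pi^{-1}(U\cap W)$, so $\bigcap_{i\in I}D_i'$ contains a unique irreducible component $W'$ dominating $W$. Being a component of a transverse intersection of components of $B'^{=1}$, $W'$ is a stratum of $(X',B'+M')$, and $\pi|_{W'}\colon W'\to W$ is birational.

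I promote birationality to $B$-birationality by induction on $|I|=\codim_X W$. The base case $|I|=1$ is generalised adjunction: restricting the identity $\pi^*(K_X+B+M)=K_{X'}+B'+M'$ to $D_1'$ yields
$$(\pi|_{D_1'})^*(K_{D_1}+B_{D_1}+M_{D_1})=K_{D_1'}+B'_{D_1'}+M'_{D_1'},$$
which together with the equality of the adjoined moduli data on any common log resolution of $D_1$ and $D_1'$ is precisely the $B$-birationality of $\pi|_{D_1'}$. The inductive step picks some $i_0\in I$, applies the base case to $\pi|_{D_{i_0}'}\colon D_{i_0}'\to D_{i_0}$ (which is again a $B$-birational morphism of log smooth g-sub-dlt pairs), notes that $\{D_i\cap D_{i_0}\}_{i\ne i_0}$ and $\{D_i'\cap D_{i_0}'\}_{i\ne i_0}$ are transverse components of $(B_{D_{i_0}})^{=1}$ and $(B'_{D_{i_0}'})^{=1}$ cutting out $W$ and $W'$ respectively, and invokes the inductive hypothesis. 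The main technical point — the compatibility of the moduli data with each adjunction — is guaranteed by $\overline{M}=\overline{M}'$ on the common resolution; if $\overline{M}$ contains a stratum in its support, one first passes to a $\K$-linearly equivalent representative containing no strata via Lemma~\ref{lem-g-sdlt}(1), so that the adjunction term $M^\psi$ is simply the ordinary restriction.
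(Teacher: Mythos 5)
There is a genuine gap in your construction of $W'$. You claim that $\bigcap_{i\in I}D_i'$ contains a component dominating $W$, and you justify this by choosing an open $U\subseteq X$ meeting $W$ over which $\pi$ is an isomorphism. But no such $U$ need exist: a $B$-birational morphism between log smooth g-sub-dlt pairs can fail to be an isomorphism at every point of $W$. Concretely, take $X$ smooth, $B=D_1+D_2$ simple normal crossing, $M=0$, let $W$ be a component of $D_1\cap D_2$, and let $\pi\colon X'\to X$ be the blow-up along $W$ with exceptional divisor $E$. Then $K_{X'}+D_1'+D_2'+E=\pi^*(K_X+B)$, so $\pi$ is $B$-birational with $B'=D_1'+D_2'+E$; but the strict transforms $D_1'$ and $D_2'$ are disjoint over $W$, so $D_1'\cap D_2'$ has no component dominating $W$ and your candidate $W'$ does not exist. (The correct stratum here is $E\cap D_1'$, which involves an exceptional component of $B'^{=1}$, not only strict transforms.) The same defect infects your inductive step, which asserts that the divisors $D_i'\cap D_{i_0}'$ cut out $W'$.

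The underlying induction is nevertheless the right idea, and it is what the paper (following Fujino's Claim $(A_n)$) relies on: for a divisorial stratum $W=D_1$ the strict transform $D_1'$ always maps birationally onto $D_1$, and adjunction upgrades this to $B$-birationality; for deeper strata one restricts $\pi$ to the strict transform $S'\to S$ of a single component $S$ of $B^{=1}$ containing $W$, observes that this is again a $B$-birational morphism of log smooth g-sub-dlt pairs of smaller dimension with $W$ a stratum of the adjoint pair on $S$, and invokes the inductive hypothesis \emph{existentially} --- accepting that the resulting stratum $W'$ may be cut out partly by exceptional components of $B'^{=1}$ rather than by strict transforms. Your remarks on compatibility of the moduli data (twisting by $\psi$ so that $\overline{M}$ contains no strata) are the right way to handle the adjunction of the nef part, but the argument needs this existential induction in place of the explicit intersection of strict transforms.
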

\begin{proof}
	By induction on the dimension of $X$.
\end{proof}

\begin{lem}\label{lem-stratum-2}
	Let $(X ,B+M)$ and $(X',B'+M')$ be proper log smooth g-sub-dlt pairs and $\pi : X' \to X$ a $B$-birational morphism. Let $W' \subsetneq X'$ be a stratum. If $W' \to W:=\pi(W' )$ is not $B$-birational, then there is a stratum $V' \subsetneq W'$ such that $V' \to  W$ is a $B$-birational morphism and the inclusion $V' \hookrightarrow  W'$ induces the isomorphism 
	$$
	H^0(W',m(K_{W'} +B_{W'}+M_{W'}) ) \overset{\sim}{\longrightarrow}  H^0(V',m(K_{V'} +B_{V'}+M_{V'}) ),
	$$
	where $m$ is a positive integer such that $m(K_X + B+M)$ is Cartier.
\end{lem}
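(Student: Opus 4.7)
The plan is to follow Fujino's strategy: locate a chain of strata inside $W'$ descending to a stratum $V'$ that is $B$-birational to $W$, and then identify each of the relevant $H^0$ spaces with $H^0(W, m(K_W+B_W+M_W))$ via pullback along~$\pi$.

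\textbf{Step 1 (constructing $V'$).} Since $\pi\colon X'\to X$ is $B$-birational, we have $\pi^*(K_X+B+M)=K_{X'}+B'+M'$ on $X'$. For the conclusion $V'\to W$ to make sense as a $B$-birational morphism, $W=\pi(W')$ must itself be a stratum of $X$; granting this, generalised adjunction on $W'\subseteq X'$ and on $W\subseteq X$ restricts the above identity to
\[
K_{W'}+B_{W'}+M_{W'} \;=\; (\pi|_{W'})^*(K_W+B_W+M_W).
\]
The failure of $\pi|_{W'}$ to be $B$-birational then forces $\dim W'>\dim W$, so a general fibre $F$ of $\pi|_{W'}$ is positive-dimensional and inherits a log smooth g-sub-dlt structure $(F,(B_{W'}+M_{W'})|_F)$ with numerically trivial log canonical divisor. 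The dlt condition then forces an irreducible component $T'$ of $\rddown{B_{W'}}$ to meet $F$ and dominate $W$; this $T'$ is a stratum of $X'$ strictly contained in $W'$. Replacing $W'$ by $T'$ and iterating, after finitely many steps we reach a stratum $V'\subsetneq W'$ of $X'$ with $\pi|_{V'}\colon V'\to W$ birational. Chasing the adjunction identities through the chain yields $(\pi|_{V'})^*(K_W+B_W+M_W)=K_{V'}+B_{V'}+M_{V'}$, so $V'\to W$ is indeed $B$-birational.

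\textbf{Step 2 (restriction isomorphism).} The log smoothness of $(X',B'+M')$ together with the g-sub-dlt condition imply that the restriction $\pi|_{W'}$ is again a contraction, i.e.\ $(\pi|_{W'})_*\mathcal{O}_{W'}=\mathcal{O}_W$ (a Kawamata--Viehweg argument along the lines of Lemma~\ref{lem-slc-contraction}). Combined with the pullback identity of Step 1 and the projection formula, this gives
\[
H^0(W',m(K_{W'}+B_{W'}+M_{W'})) \;\cong\; H^0(W,m(K_W+B_W+M_W)),
\]
and the analogous isomorphism holds for $V'$ in place of $W'$, using that $\pi|_{V'}$ is birational and hence induces equality on global sections of pulled-back line bundles between normal proper varieties. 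The composite $V'\hookrightarrow W'\to W$ coincides with $\pi|_{V'}$, so the inclusion-induced restriction $H^0(W',m(K_{W'}+B_{W'}+M_{W'}))\to H^0(V',m(K_{V'}+B_{V'}+M_{V'}))$ intertwines these two identifications and is therefore an isomorphism.

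\textbf{Main obstacle.} The delicate point is the existence claim in Step 1: from the failure of $B$-birationality alone, one must exhibit a component $T'$ of $\rddown{B_{W'}}$ that dominates $W$. In the classical dlt setting, this is Fujino's Claim~$(B_n)$ of \cite[Lemma 4.9]{fujino1}, relying on connectedness and Kawamata--Viehweg vanishing; in the g-sub-dlt setting one must verify that the moduli part $M_{W'}$ does not disrupt the argument. This is where the log smoothness and the compatibility of $\overline{M}$ under restriction to strata become essential—they guarantee that $M_{W'}$ restricts smoothly down the chain and contributes nothing to the coefficient-one locus, so the classical connectedness argument carries over verbatim. Once Step 1 is secured, Step 2 is a formal consequence of the projection formula.
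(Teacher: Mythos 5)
Your overall strategy---descend through a chain of strata inside $W'$ and identify each $H^0$ with $H^0(W,m(K_W+B_W+M_W))$---is reasonable in spirit, but Step 1 has a genuine gap at its central claim. From ``the general fibre $F$ is positive-dimensional with $K_F+B_F+M_F\sim 0$'' you assert that ``the dlt condition forces'' a component of $\rddown{B_{W'}}$ to meet $F$ and dominate $W$. This does not follow from the stated hypotheses: for a sub-pair the triviality of the log canonical class on $F$ by itself puts no constraint on $B_F^{=1}$, and nothing you invoke rules out a fibre that is sub-klt with trivial log canonical divisor. What actually makes the claim true is the combinatorial structure of lc places of a \emph{log smooth} pair: after a further modification which is an isomorphism at the generic point of $W'$, the morphism $\pi$ factors through a chain of blow-ups of strata of $X$, each exceptional divisor of which is a projective-space bundle over its centre whose boundary contains the relative hyperplanes and whose minimal strata are \emph{sections} of that bundle. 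This is exactly the reduction the paper performs (blow up so that $W'$ becomes a divisor, factor $\pi$ through stratum blow-ups, reduce to a single blow-up, and run a double induction on dimension and on the length of the chain), and it is also what secures a point your iteration silently assumes: that the terminal stratum $V'$ maps \emph{birationally} onto $W$, rather than merely generically finite and dominant of degree $>1$.

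Step 2 is a clean route to the $H^0$-isomorphism once $V'$ is in hand, but it also rests on an unproved assertion, namely $(\pi|_{W'})_*\mathcal{O}_{W'}=\mathcal{O}_W$: Lemma \ref{lem-slc-contraction} gives connectedness only for the full boundary $\overline{B}^{=1}\to B^{=1}$, not for an individual stratum over its image, so this needs its own vanishing argument. In the paper's reduction the isomorphism comes for free, since after the reduction $W'\to W$ is (a composition of maps already handled by induction with) a $\PP^{c-1}$-bundle, on which the relevant sheaf is pulled back from the base. In short, your proposal runs on the same mechanism as Fujino's Claim $(B_n)$, which the paper invokes, but as written it asserts rather than proves the two statements---domination of $W$ by a boundary stratum, and degree one of the terminal map---that constitute the actual content of the lemma; the missing ingredient is the factorisation of $\pi$ through blow-ups of strata.
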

\begin{proof}
	Suppose Lemma \ref{lem-stratum-2} holds for dimension $\le d-1$ where $d=\dim X$. Blowing-up $W'$ we can assume it is a divisor. By repeatedly blowing-up strata of $X$, there exists a a sequence of blow-ups $X_0 \to X_1 \to \cdots \to X_k = X$ whose centres are the centres associated to the valuation $W'$ such that the rational map $X' \bir X_0$ is an isomorphism at the generic point of $W'$. Hence, by induction, $V' \bir W_0$ is $B$-birational. Replacing $X'$ we can assume $\pi$ is a sequence of blow-ups of strata. By induction on $k$ we can assume $\pi$ is a single blow-up, hence we conclude the lemma by the inductive assumption in dimension $d-1$.
\end{proof}

Combining Lemmas \ref{lem-stratum-1} and \ref{lem-stratum-2}, we obtain the next lemma.

\begin{lem}[\text{cf.\cite[Lemma 2.16]{fujino-gongyo}}]\label{lem-induction}
	Let $\varphi : (X, B+M) \bir (X', B'+M')$ be a $B$-birational map between proper log smooth g-sub-dlt pairs, and $S$ be a stratum of $(X, B+M)$ with
	$K_S+B_S+M_S=(K_X+B+M)|_S$. We take a suitable common log resolution as
		$$
	\xymatrix{	
		&  (\overline{X},\overline{B}+\overline{M})  \ar[dl]_{\alpha}\ar[dr]^{\beta}&\\
		(X, B+M)  \ar@{-->}[rr]^{\varphi}&   &  (X', B'+M')  &\\
	    } 
	$$
	Then we can find a stratum $V$ of $(X, B+M)$ contained in $S$ with $K_V + B_V+M_V = (K_X +B+M)|_V$, a stratum $\overline{V}$ of $(\overline{X},\overline{B}+\overline{M})$ with $K_{\overline{V}} +B_{\overline{V}}+M_{\overline{V}} = (K_{\overline{X}} +\overline{B} +\overline{M})|_{\overline{V}}$,
	and a stratum $V'$ of $(X', B'+M')$ with $K_{V'} + B_{V'}+M_{V'} = (K_{X'} + B'+M')|_{V'}$ such
	that the following conditions hold:
	\begin{enumerate}
		\item $\alpha|_{\overline{V}}$ and $\beta|_{\overline{V}}$ are $B$-birational morphisms. Therefore, $\varphi|_V=(\beta|_{\overline{V}} ) \circ (\alpha|_{\overline{V}} )^{-1} : (V, B_V+M_V ) \bir (V', B_{V'}+M_{V'})$ is a $B$-birational map.
		
		\item $H^0(S, m(K_S + B_S+M_S)) \simeq H^0(V, m(K_V + B_V+M_V ))$ by the natural restriction map where $m$ is a positive integer such that $m(K_X + B+M)$ is Cartier.
	\end{enumerate}
\end{lem}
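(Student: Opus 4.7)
My plan is to combine Lemmas \ref{lem-stratum-1} and \ref{lem-stratum-2} via an inductive descent on the dimension of strata of the common log resolution $(\overline{X}, \overline{B}+\overline{M})$. The goal is to locate a stratum $\overline{V} \subseteq \overline{X}$ on which both $\alpha$ and $\beta$ restrict to $B$-birational morphisms onto strata of $X$ and $X'$ respectively, while the chain of restrictions produces the required isomorphism on global sections.

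First I would apply Lemma \ref{lem-stratum-1} to the $B$-birational morphism $\alpha: \overline{X} \to X$ and the stratum $S$, obtaining a stratum $\overline{S}_0 \subseteq \overline{X}$ with $\alpha|_{\overline{S}_0}: \overline{S}_0 \to S$ a $B$-birational morphism; by definition of $B$-birationality this induces a natural isomorphism $H^0(S, m(K_S+B_S+M_S)) \simeq H^0(\overline{S}_0, m(K_{\overline{S}_0}+B_{\overline{S}_0}+M_{\overline{S}_0}))$ via pullback and restriction. Next, consider $\beta|_{\overline{S}_0}: \overline{S}_0 \to \beta(\overline{S}_0)$. If it is already $B$-birational onto a stratum $V'$ of $X'$, we stop and set $V = S$, $\overline{V} = \overline{S}_0$. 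Otherwise, invoke Lemma \ref{lem-stratum-2} applied to $\beta$ and $\overline{S}_0$ to produce a proper sub-stratum $\overline{S}_1 \subsetneq \overline{S}_0$ such that $\beta|_{\overline{S}_1}$ is a $B$-birational morphism and the inclusion $\overline{S}_1 \hookrightarrow \overline{S}_0$ induces an isomorphism $H^0(\overline{S}_0, \cdot) \simeq H^0(\overline{S}_1, \cdot)$.

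At this point $\alpha|_{\overline{S}_1}$ may fail to be $B$-birational onto its image. If it is, we stop; otherwise apply Lemma \ref{lem-stratum-2} once more, now with respect to $\alpha$ and $\overline{S}_1$, to obtain $\overline{S}_2 \subsetneq \overline{S}_1$ with $\alpha|_{\overline{S}_2}$ $B$-birational and a further $H^0$-isomorphism. This may break $\beta$-birationality, so we iterate, alternately appealing to Lemma \ref{lem-stratum-2} for $\alpha$ and $\beta$. Because each application strictly decreases the dimension of the ambient stratum, the procedure terminates after finitely many steps at a stratum $\overline{V}$ for which both $\alpha|_{\overline{V}}$ and $\beta|_{\overline{V}}$ are $B$-birational morphisms onto strata $V := \alpha(\overline{V}) \subseteq S$ and $V' := \beta(\overline{V})$. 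Composing the $H^0$-isomorphisms produced at each step yields the natural-restriction isomorphism asserted by (2), and (1) is immediate from the terminating configuration.

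The main subtlety I foresee is verifying that the inductive hypotheses of Lemmas \ref{lem-stratum-1} and \ref{lem-stratum-2} remain valid as we descend: each restricted pair must itself be log smooth g-sub-dlt so that generalised adjunction and those two lemmas keep applying, and the moduli part must restrict compatibly so that $B$-birationality of the restriction is intrinsic and does not depend on whether we pull back via $\alpha$ or $\beta$. This is guaranteed by the log smoothness of $(\overline{X}, \overline{B}+\overline{M})$ together with the fact that on the common log resolution we have $\overline{B}=\overline{B}'$ and $\overline{M}=\overline{M}'$ (Definition \ref{defn-B-bir}), so the g-sub-dlt data on any stratum are unambiguously defined and the analysis in the preceding two lemmas transfers intact to each iterative step.
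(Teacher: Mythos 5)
Your proposal is correct and is essentially the paper's argument: the paper's proof is literally the one-line remark that the lemma follows by combining Lemmas \ref{lem-stratum-1} and \ref{lem-stratum-2}, and your alternating descent (first locating $\overline{S}_0$ over $S$ via Lemma \ref{lem-stratum-1}, then repeatedly invoking Lemma \ref{lem-stratum-2} for $\beta$ and $\alpha$ in turn until both restrictions are $B$-birational, with termination by dimension count) is exactly the intended combination, as in \cite[Lemma 4.9]{fujino1} and \cite[Lemma 2.16]{fujino-gongyo}. The composite of the $H^0$-isomorphisms along the chain is indeed the natural restriction map, since each intermediate map is a pullback or restriction of sections inside the function field.
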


Now we turn to proving the finiteness of $B$-representations. We remark that neither the positivity (of log canonical divisors or data) nor the effectiveness of boundaries is needed. Before we state the finiteness theorem, we introduce the following definition which is crucial to the arguments in Subsections \ref{subsec-vertical} and \ref{subsec-horizontal}.

\begin{defn}[General data]\label{defn-g-data}
	Let $(X/Z, B+M)$ be a g-sub-lc pair or a g-slc pair with data $\overline{M}$ and $\overline{X} \to X$. We say $\overline{M}$ is a \emph{general datum} if
	\begin{enumerate}
		\item $(\overline{X},\overline{B}+\overline{M})$ is a quasi-projective log smooth resolution;
		
		\item The support of $\overline{M}$ contains no strata;
		
		\item  $(\overline{X},\overline{B} - \epsilon \overline{B}^{=1} +\overline{M})$ is log smooth and sub-klt as a sub-pair for every number $\epsilon >0$.
	\end{enumerate}
    If further $(X,B+M)$ is g-dlt or g-sdlt, then we say a general datum $\overline{M}$ is \emph{very general} provided that
    \begin{enumerate}
    	\item[(4)] $a(E,X,B+M)>0$ for every exceptional divisor $E$ on $\overline{X}$. 
    \end{enumerate}
\end{defn}

\begin{rem}\label{rem-g-data}
	By saying $(X, B+M)$ is g-sub-lc pair or a g-slc pair \emph{with general data} $\overline{M}$, we mean $\overline{M}$ is chosen to be sufficiently general on a sufficiently high resolution $\overline{X}$.
	\begin{enumerate}
		\item By the proof of Lemma \ref{lem-g-sdlt}(1), given a g-sub-lc pair or a g-slc pair $(X,B+M)$ with data $\overline{M}$ such that $m\overline{M}$ is Cartier for some positive integer $m \ge 2$, we can always find a general data $\overline{M'}$ with $m\overline{M}' \sim m\overline{M}$ and the coefficients of $\overline{M} \le \frac{1}{m}$.
		
		\item If $(X,B+M)$ is further quasi-projective g-sdlt, then by Lemmas \ref{lem-g-dlt} and \ref{dlt-resolution}, we can always find a very general data $\overline{M'}$ with $m\overline{M}' \sim m\overline{M}$ and the coefficients of $\overline{M} \le \frac{1}{m}$.
	\end{enumerate}
\end{rem}

\begin{thm}[\text{cf.\cite[Theorem 3.15]{fujino-gongyo}}]\label{thm-good-lc}
	Let $(X, B+M)$ be a proper g-sub-lc pair with general data $\overline{M}$. Let $m \ge 2$ be an integer such that $m(K_X+B+M), m\overline{M}$ are Cartier.
	Then $\rho_m(\Bir(X, B,M))$ is a finite group. 
\end{thm}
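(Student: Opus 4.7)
\textbf{Proof strategy for Theorem \ref{thm-good-lc}.} The plan is to adapt \cite[Theorem 3.15]{fujino-gongyo} to g-pairs by using the general datum condition to separate a sub-klt part from the g-lc boundary, and then reduce everything to the finiteness of $\widetilde{B}$-pluricanonical representations (Theorem \ref{thm-B'-rep}). By Lemma \ref{lem-B-rep}(1) I may replace $X$ by the log smooth resolution $\overline{X}$, after which $(X,B+M)$ is log smooth and $\overline{M}$ is a general datum on $X$ itself. Set $S:=B^{=1}$ and $B':=B-\tfrac{1}{m}S$. Condition (3) of Definition \ref{defn-g-data} (applied with $\epsilon=1/m$) says precisely that $(X,B'+M)$ is log smooth sub-klt as a sub-pair, and $m(K_X+B'+M)=m(K_X+B+M)-S$ is Cartier.

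Next, consider the short exact sequence
\[
0\to\mathcal{O}_X(m(K_X+B+M)-S)\to\mathcal{O}_X(m(K_X+B+M))\to\mathcal{O}_S(m(K_X+B+M)|_S)\to 0.
\]
Its cohomology gives a $\Bir(X,B,M)$-stable subspace $W:=H^0(X,m(K_X+B'+M))$ inside $V:=H^0(X,m(K_X+B+M))$, together with an injection $V/W\hookrightarrow V':=H^0(S,m(K_S+B_S+M_S))$ via adjunction. Any $\varphi\in\Bir(X,B,M)$ preserves the pre-boundary b-divisor and hence its part of coefficient one, so $\varphi\in\Bir(X,B',M)$. Since $(X,B'+M)$ is log smooth sub-klt, Lemma \ref{lem-B-rep}(2) places $\Bir(X,B',M)$ inside $\widetilde{\Bir}_m(X,B'+M)$, and Theorem \ref{thm-B'-rep} shows that the action on $W$ factors through a finite group.

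For the action on $V/W\hookrightarrow V'$, Lemma \ref{lem-res} identifies it with the induced $B$-birational action of $\varphi|_S$ on $(S,B_S+M_S)$. I then iterate Lemma \ref{lem-induction}, starting from a component of $S$ and descending through lower-dimensional strata, until reaching a minimal stratum $V^{*}\subsetneq X$ for which the restriction $H^0(S,\cdots)\to H^0(V^{*},m(K_{V^{*}}+B_{V^{*}}+M_{V^{*}}))$ becomes an equivariant isomorphism onto a subspace. Because $V^{*}$ is minimal and $\overline{M}$ is a general datum, the sub-pair $(V^{*},B_{V^{*}}+M_{V^{*}})$ is log smooth sub-klt: no further component of $S$ meets $V^{*}$, and the non-$S$ components of $B+M$ have coefficients strictly less than $1$ at $V^{*}$ by conditions (2)--(3) of Definition \ref{defn-g-data}. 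Applying Lemma \ref{lem-B-rep}(2) and Theorem \ref{thm-B'-rep} once more gives finiteness of the action on $V/W$.

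It remains to combine the two finitenesses. The image of $\rho_m(\Bir(X,B,M))$ in $\mathrm{GL}(W)\times\mathrm{GL}(V/W)$ is finite, so one must control the extension living in the unipotent radical of the parabolic preserving the filtration $W\subset V$. Over a field of characteristic zero no non-identity unipotent automorphism has finite order, so the argument is completed once one observes that every $\rho_m(\varphi)$ has finite order (e.g.\ a sufficiently high power of $\rho_m(\varphi)$ lies in the unipotent radical and also preserves the integral lattice of sections of the Cartier line bundle $m(K_X+B+M)$, forcing it to be the identity). I expect this final combining step, together with the bookkeeping needed to make Lemma \ref{lem-induction} and the inheritance of the general datum under adjunction go through cleanly for g-sub-lc rather than usual lc pairs, to be the main obstacle. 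The genuinely new ingredient over \cite[Theorem 3.15]{fujino-gongyo} is the general datum hypothesis, whose role is precisely to make both the perturbation $B'=B-\tfrac{1}{m}S$ on $X$ and its restriction to any minimal stratum $V^{*}$ sub-klt, so that Theorem \ref{thm-B'-rep} is applicable on both levels.
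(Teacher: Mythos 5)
Your reduction of the action on $W=H^0(X,m(K_X+B+M)-S)$ to Theorem \ref{thm-B'-rep} is essentially sound, although the justification is off: a $B$-birational self-map of $(X,B+M)$ is \emph{not} in $\Bir(X,B',M)$ for $B'=B-\tfrac{1}{m}S$, because $\varphi^*S=\alpha_*\beta^*S$ differs from $S$ in general; what saves you is that $\varphi^*S\ge S$, so $\varphi$ is still $m$-$\widetilde{B}$-birational for $(X,B'+M)$ in the sense of Definition \ref{defn-m-B-bir} (this is the role of Lemma \ref{lem-m-B-bir}, not of Lemma \ref{lem-B-rep}(2)). The two serious gaps are elsewhere. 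First, your treatment of $V/W\hookrightarrow H^0(S,\cdots)$ via a ``minimal stratum $V^*$'' does not work: the stratum produced by Lemma \ref{lem-induction} depends on the individual map $\varphi$ and is carried to a \emph{different} stratum $V'^*$, so the various $\varphi$ do not assemble into a single representation on $H^0(V^*,\cdots)$ to which Theorem \ref{thm-B'-rep} could be applied; moreover the intermediate strata are only g-sub-lc, not sub-klt. The paper instead runs induction on dimension (the full theorem for the disjoint union of all $i$-dimensional strata), follows the chain $S^0_j\dashrightarrow S^1_j\dashrightarrow\cdots$ of strata under iterates of $\varphi$ until it becomes periodic, deduces that each $\varphi^*$ has uniformly bounded order on $H^0(T,\cdots)$, and concludes with Burnside's theorem. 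Some version of this mechanism is unavoidable in your argument as well.

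Second, your final ``combining'' step is unjustified and is exactly the crux of the theorem. There is no natural $\rho_m(\varphi)$-invariant integral lattice in $H^0(X,m(K_X+B+M))$ — this is a complex vector space of sections of a line bundle, with no integral structure — so you cannot conclude that a power of $\rho_m(\varphi)$ landing in the unipotent radical of the parabolic preserving $W\subset V$ is the identity. The paper avoids the extension problem altogether by a case division along the Iitaka fibration $f:X\to Y$ of $K_X+B+M$ (via Lemma \ref{lem-iitaka-fib}): when $S$ is vertical over $Y$, or horizontal with $T^h\le mR^h$, bigness of $D_Y$ yields $|m'(K_X+B+M)-S|\ne\emptyset$ and an invariant section $s$, and multiplication by $s$ embeds all of $V$ equivariantly into a single $\widetilde{B}$-representation space $H^0(X,(m+am')(K_X+B+M)-aS)$; in the remaining case $\kappa(X/Y,R^h)=0$ forces $H^0(X,m(K_X+B+M)-T)=0$, i.e.\ $W=0$, and the restriction to $T$ is injective, so only the quotient piece survives. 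In either case one never has to control a nontrivial extension of two finite representations. Without this (or a genuine substitute for the finite-order claim), your filtration argument does not close.
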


Subsections \ref{subsec-vertical} and \ref{subsec-horizontal}  will be devoted to proving the above theorem.

\subsection{Proof of Theorem \ref{thm-good-lc}: the vertical case}\label{subsec-vertical}
    We begin with the g-sub-klt case.
\begin{lem}[\text{cf.\cite[Corollary 3.10]{fujino-gongyo}}]\label{lem-klt}
	Let $(X, B+M)$ be a proper g-sub-klt pair with general data $\overline{M}$. Let $m \ge 2$ be a positive integer such that $m(K_X + B+M),m\overline{M}$ are Cartier.
	Then $\rho_m(\Bir(X, B,M))$ is a finite group.
\end{lem}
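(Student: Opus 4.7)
The plan is to reduce to the finiteness theorem for $\widetilde{B}$-pluricanonical representations of log smooth sub-klt pairs, namely Theorem \ref{thm-B'-rep}, by passing to the general log resolution. The key observation is that for a g-sub-klt pair, the coefficients of $\overline{B}$ are strictly less than $1$, so condition (3) of Definition \ref{defn-g-data} (with $\overline{B}^{=1}=0$) says that $(\overline{X},\overline{B}+\overline{M})$ is log smooth sub-klt when viewed merely as a sub-pair; this is precisely the situation where Theorem \ref{thm-B'-rep} applies.

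First, I would choose the log resolution $\alpha\colon(\overline{X},\overline{B}+\overline{M})\to X$ supplied by the general data, so that $K_{\overline{X}}+\overline{B}+\overline{M}=\alpha^*(K_X+B+M)$ and $m(K_{\overline{X}}+\overline{B}+\overline{M})$ is Cartier. By Lemma \ref{lem-B-rep}(1) (applied with $B'=\overline{B}$, $M'=\overline{M}$) we have
\[
\Bir(X,B,M)=\Bir(\overline{X},\overline{B},\overline{M}).
\]
Next, using the projection formula together with $\alpha^*(K_X+B+M)=K_{\overline{X}}+\overline{B}+\overline{M}$, one has a canonical identification
\[
\alpha_*\mathcal{O}_{\overline{X}}\bigl(m(K_{\overline{X}}+\overline{B}+\overline{M})\bigr)\;\cong\;\mathcal{O}_X\bigl(m(K_X+B+M)\bigr),
\]
so the spaces of global sections agree. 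Under this identification, the $B$-representation $\rho_m$ on $H^0(X,m(K_X+B+M))$ is the same as the $B$-representation of $\Bir(\overline{X},\overline{B},\overline{M})$ on $H^0(\overline{X},m(K_{\overline{X}}+\overline{B}+\overline{M}))$.

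Since $(\overline{X},\overline{B}+\overline{M})$ is log smooth sub-klt by the general data hypothesis, Lemma \ref{lem-B-rep}(2) gives the inclusion
\[
\Bir(\overline{X},\overline{B},\overline{M})\;\subset\;\widetilde{\Bir}_m(\overline{X},\overline{B}+\overline{M}),
\]
and this inclusion is compatible with the two representations, in the sense that $\rho_m$ on the left agrees with the restriction of $\widetilde{\rho}_m$ on the right (one just has to compare the definitions: for a $B$-birational map $\alpha^*$ and $\beta^*$ produce the same divisor, so the associated inclusion of sections is in fact an equality and the induced automorphism coincides with $\widetilde{\rho}_m(\varphi)$).

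Finally, Theorem \ref{thm-B'-rep} asserts that $\widetilde{\rho}_m(\widetilde{\Bir}_m(\overline{X},\overline{B}+\overline{M}))$ is a finite group, hence so is its subgroup $\rho_m(\Bir(X,B,M))$. The only real content to verify is the compatibility in the previous paragraph; once this is in place there is no hard step, because all the analytic difficulty has been absorbed into Theorem \ref{thm-B'-rep}. The main conceptual obstacle is simply the bookkeeping of ensuring that passing from $X$ to the high model $\overline{X}$ does not change either the group of $B$-birational self-maps or the space of pluri-log-canonical sections — both of which follow formally from $K_{\overline{X}}+\overline{B}+\overline{M}=\alpha^*(K_X+B+M)$ and the generality of $\overline{M}$.
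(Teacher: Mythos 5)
Your proposal is correct and follows essentially the same route as the paper: the paper's proof is the one-line "follows from Lemma \ref{lem-B-rep}(2) and Theorem \ref{thm-B'-rep}", and you have simply made explicit the implicit reduction to the log smooth model $(\overline{X},\overline{B}+\overline{M})$ (via Lemma \ref{lem-B-rep}(1) and the identification of section spaces) together with the observation that condition (3) of the general-data hypothesis makes this model log smooth sub-klt as a sub-pair. The compatibility of $\rho_m$ with $\widetilde{\rho}_m$ that you flag as the only real content is exactly what Lemma \ref{lem-B-rep}(2) packages, so no gap remains.
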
    
\begin{proof}
	The lemma follows from Lemma \ref{lem-B-rep}(2) and Theorem \ref{thm-B'-rep}.
\end{proof}
    
\begin{prop}[\text{cf.\cite[Theorem 3.11]{fujino-gongyo}}]\label{prop-good-lc}
	Let $(X, B+M)$ be an irreducible projective g-sub-lc pair with general data $\overline{M}$, and $f: X' \to Y$ be an Iitaka fibration of $K_X+B+M$. Suppose that all g-sub-lc centres are vertical over $Y$. Let $m \ge 2$ be a positive integer such that $m(K_X + B+M),m\overline{M}$ are Cartier. Then $\rho_m(\Bir(X, B,M))$ is a finite group.
\end{prop}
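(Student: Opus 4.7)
The plan is to push the pluricanonical representation down to the Iitaka base $Y$: the verticality of all g-sub-lc centres forces the canonical bundle formula to produce a g-sub-klt pair $(Y,B_Y+M_Y)$, and Lemma \ref{lem-klt} then yields finiteness on $Y$, which implies finiteness on $X$. Using Lemma \ref{lem-B-rep}(1) together with Lemmas \ref{lem-iitaka-fib} and \ref{lem-smooth-modification}, I would first replace $(X,B+M)$ with a sufficiently high log resolution so that $(X,B+M)$ is log smooth with general data and the Iitaka fibration $f\colon X\to Y$ is a morphism of smooth quasi-projective varieties with $K_X+B+M\sim_\Q f^*D_Y+R$, where $D_Y$ is big on $Y$, $R\geq 0$, $R^v$ is very exceptional over $Y$, and $\kappa(X/Y,R^h)=0$. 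Subtracting $R$ gives $K_X+(B-R)+M\sim_\Q f^*D_Y$, which is $\Q$-linearly trivial over $Y$; the sub-pair $(X,(B-R)+M)$ is g-sub-lc (since $B-R\leq B$), and its g-sub-lc centres are a subset of those of $(X,B+M)$, hence still vertical over $Y$.

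Next, the canonical bundle formula for lc-trivial fibrations of g-sub-pairs produces a g-sub-lc pair $(Y,B_Y+M_Y)$ with $K_Y+B_Y+M_Y\sim_\Q D_Y$ such that $m(K_X+(B-R)+M)\sim mf^*(K_Y+B_Y+M_Y)$ for sufficiently divisible $m$, giving a canonical identification
$$
H^0(X,m(K_X+B+M))\;\cong\;H^0(Y,m(K_Y+B_Y+M_Y)).
$$
The verticality hypothesis ensures that no g-sub-lc centre of $(X,(B-R)+M)$ dominates $Y$, which forces $(Y,B_Y+M_Y)$ to be g-sub-klt. Every $\varphi\in\Bir(X,B,M)$ preserves $K_X+B+M$ and descends to a $\varphi_Y\in\Bir(Y,B_Y,M_Y)$ (the descent is well-defined up to $\Q$-linear equivalence, since $(Y,B_Y+M_Y)$ is determined by the canonical bundle formula only up to such equivalence). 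The action of $\rho_m(\Bir(X,B,M))$ on $H^0(X,m(K_X+B+M))$ then factors through the action of $\Bir(Y,B_Y,M_Y)$ on the identified space, which is finite by Lemma \ref{lem-klt} applied to the g-sub-klt pair $(Y,B_Y+M_Y)$ with general data.

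The main obstacle is to verify the canonical bundle formula in the g-sub-lc setting with nef data $\overline{M}$, and to confirm that the constructed $(Y,B_Y+M_Y)$ is actually g-sub-klt (not merely g-sub-lc); both conclusions rely on the verticality of g-sub-lc centres together with a careful analysis of the moduli b-divisor that arises on $Y$ from the lc-trivial fibration. A further subtlety is to ensure that the Cartier index of the pulled-back pluricanonical divisor aligns with $m$ on both sides, so that the $m$-th pluricanonical representation can be read off faithfully on the base.
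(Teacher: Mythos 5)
Your route is genuinely different from the paper's, but it has a gap that I do not think can be repaired as stated. The fatal step is the claim that verticality of the g-sub-lc centres forces the descended pair $(Y,B_Y+M_Y)$ to be g-sub-klt. Verticality only says that no lc centre dominates $Y$; an lc centre $S$ of $(X,(B-R)+M)$ may still dominate a prime divisor $P\subset Y$. In that case, for any $t>0$ and any non-klt place $E$ with centre $S$, one has $a(E,X,(B-R)+M+tf^*P)=-t\,\mult_E f^*P<0$, so the log canonical threshold over the generic point of $P$ is $0$ and $\mult_P B_Y=1$. Thus the canonical bundle formula only returns a g-\emph{sub-lc} pair on $Y$, Lemma \ref{lem-klt} does not apply, and your reduction lands back on a statement of the same strength as the one being proved; in the key case where $K_X+B+M$ is big (the case needed for Corollary \ref{cor-B-rep}) we have $\dim Y=\dim X$, so there is not even a dimensional induction to fall back on. A second, independent gap is that you invoke a canonical bundle formula for g-sub-lc-trivial fibrations producing a b-nef moduli part with \emph{general} data and controlled Cartier index: this is nowhere established in the paper (it is itself a deep theorem even for ordinary pairs), and without it the hypotheses of Lemma \ref{lem-klt} on $Y$ cannot be verified. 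Finally, your identification of $H^0$'s only holds for sufficiently divisible $m$, whereas the proposition concerns the given $m\ge 2$; you flag this but offer no mechanism to bridge it.

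For contrast, the paper never descends to $Y$. It uses the Iitaka fibration only to produce an effective member: since $S:=B^{=1}$ is vertical there is a Cartier $S_Y$ with $f^*S_Y\ge S$, and bigness of $D_Y$ gives $|m'(K_X+B+M)-S|\neq\emptyset$. This lets one absorb $S$ into the fixed part, so $\Bir(X,B,M)\subset\widetilde{\Bir}_{m'}(X,B-\tfrac{1}{m'}S+M)$, where $(X,B-\tfrac{1}{m'}S+M)$ is log smooth sub-klt precisely because the data is general (condition (3) of Definition \ref{defn-g-data}); Theorem \ref{thm-B'-rep} then gives finiteness for $m'$, and multiplication by a $\Bir(X,B,M)$-invariant section of $a(m'(K_X+B+M)-S)$ transports the finiteness to the prescribed exponent $m$. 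If you want to salvage your approach, you would at minimum need to (i) handle the coefficient-one part of $B_Y$ by the same absorption trick on $Y$ (which essentially reproduces the paper's argument one level down), and (ii) prove the generalized canonical bundle formula with the required properties of the moduli part.
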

\begin{proof}
	By Lemma \ref{lem-iitaka-fib} and replacing $X$ with $\overline{X}$, we can assume $(X,B+M)$ is log smooth projective and
	\begin{enumerate}
		\item $K_{X}+B+M \sim_\Q f^*D_Y+ R$ where $D_Y,R \ge 0$ are snc divisors and $D_Y$ is big;
		
		\item $\kappa(X/Y,R^h)=0$ and $R^v$ is very exceptional$/Y$, where $R^h$ (resp. $R^v$) denotes the horizontal (resp. vertical) part.
	\end{enumerate}
	By Lemma \ref{lem-klt}, we can assume $B^{=1}\neq 0$. Moreover, since $B^{=1}$ is vertical, there exists an effective Cartier divisor $S_Y$, such that $ f^*S_Y \ge S:=B^{=1} $. Since $D_Y$ is big, there is an integer $m'$ such that $|m' D_Y -S_Y| \neq 0$ and 
	$$m'(K_X +B+M) \sim f^*m' D_Y+m'R,$$ 
	hence $|m'(K_X+B+M)-S| \neq 0$. Since $\varphi^*S \ge S$, by Lemma \ref{lem-m-B-bir}, $\varphi \in \widetilde{\Bir}_{m'}(X,B- \frac{1}{m'} S+M)$ which in turn implies that we have a natural inclusion
    $$
    \Bir(X,B,M) \subset \widetilde{\Bir}_{m'}(X,B- \frac{1}{m'} S+M)
    $$
    By Theorem \ref{thm-B'-rep}, $\widetilde{\rho}_{m'} (\widetilde{\Bir}_{m'}(X,B- \frac{1}{m'} S +M))$ is a finite group, hence $\varrho_{m'}(\Bir(X,B,M))$ is a finite group, where $\varrho_{m'}:\Bir(X,B,M) \to \mathrm{Aut}(H^0(X,m'(K_X+B+M)-S))$.
    
    Letting $a=|\varrho_{m'}(\Bir(X,B,M))| < \infty$, we can find a $\Bir(X, B,M)$-invariant non-zero section $$s \in H^0(X, a(m'(K_X +B+M)-S)).$$  
    By using $s$, we have a natural inclusion.
    \begin{equation*}
    	H^0(X,m(K_X+B+M)) \overset{\cdot s}{\longrightarrow}H^0(X,(m+am')((K_X+B+M) -aS)) 
    \end{equation*}
    Since $\varphi^*$ is nothing but an identity on the function field, we see that $\Bir(X,B,M)$ acts on both linear spaces compatibly, hence $\varrho_{m+am'}$ factors through $\rho_m$. Because $\varrho_{m+am'}(\Bir(X,B,M)) \subset \widetilde{\rho}_{m+am'}(\widetilde{\Bir}_{m+am'}(X,B-\frac{a}{m+am'}S+ M))$, we conclude that $\rho_m(\Bir(X, B,M))$ is a finite group by Theorem \ref{thm-B'-rep}.
\end{proof}

\begin{cor}[\text{cf.\cite[Corollary 3.13]{fujino-gongyo}}]\label{cor-B-rep}
	Let $(X, B+M)$ be a proper g-sub-lc pair with general data such that $K_X +B+M$
	is big. Then $\Bir(X, B,M)$ is a finite group. 
\end{cor}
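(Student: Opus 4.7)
My plan is to deduce the corollary from Proposition~\ref{prop-good-lc} by first verifying its hypotheses and then boosting ``finiteness of $\rho_m(\Bir(X,B,M))$'' to ``finiteness of $\Bir(X,B,M)$'' via an injectivity argument for large $m$. First I would use Lemma~\ref{lem-B-rep}(1) to replace $X$ by the quasi-projective log resolution $(\overline{X},\overline{B}+\overline{M})$ supplied by the general data; being proper and quasi-projective, this model is projective, so I may assume $X$ is projective and log smooth. A finite-index subgroup of $\Bir(X,B,M)$ permutes the connected components of $X$ trivially, so I may also assume $X$ is irreducible, which puts us exactly in the setting of Proposition~\ref{prop-good-lc}.

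Next I would check the verticality hypothesis. Since $K_X+B+M$ is big, the invariant Iitaka fibration $f:X'\to Y$ satisfies $\dim Y=\kappa_\iota(X,K_X+B+M)=\dim X$, hence is birational. Every g-sub-lc centre is by definition the image on $X$ of a prime divisor over $X$, so is a proper subvariety of $X$; its image on $Y$ is then a proper subvariety of $Y$, and verticality is automatic. Consequently, for any integer $m\ge 2$ with $m(K_X+B+M)$ and $m\overline{M}$ Cartier, Proposition~\ref{prop-good-lc} gives that $\rho_m(\Bir(X,B,M))$ is a finite group.

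The core new step is to show $\rho_m$ is injective for $m$ large enough. I would take $m$ additionally so that the pluricanonical map $\phi_m:X\dashrightarrow\mathbb{P}(H^0(X,m(K_X+B+M))^{\vee})$ is birational onto its image, which is available by bigness of $K_X+B+M$. For $\varphi\in\ker\rho_m$, fix a basis $s_0,\ldots,s_N$ of $H^0(X,m(K_X+B+M))$; the hypothesis $\varphi^{*}s_i=s_i$ yields $\varphi^{*}(s_i/s_0)=s_i/s_0$ for every $i$, and since $\phi_m$ is birational the rational functions $s_i/s_0$ generate $k(X)$. Hence $\varphi^{*}$ is trivial on $k(X)$, forcing $\varphi=\mathrm{id}$. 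Combined with finiteness of the image, this concludes that $\Bir(X,B,M)$ is finite.

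The step I expect to require the most care is checking that the pullback of sections used in Definition~\ref{defn-B-rep}, constructed through a common log resolution, interacts compatibly with the pullback of rational functions via $\phi_m$; this compatibility is implicit in the common-resolution set-up, but it is the only non-formal ingredient in the argument and should be spelled out.
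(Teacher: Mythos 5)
Your proposal is correct and follows essentially the same route as the paper: apply Proposition~\ref{prop-good-lc} (verticality of g-sub-lc centres being automatic since bigness makes the Iitaka fibration birational) and then use birationality of the pluricanonical map to force injectivity, the paper phrasing this as injectivity of the induced map to $\mathrm{Aut}(\PP(H^0(X,m(K_X+B+M))))$ while you argue directly on $\ker\rho_m$ via the ratios $s_i/s_0$ generating $k(X)$. Your write-up merely fills in reductions (projectivity, irreducibility, verticality) that the paper leaves implicit.
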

\begin{proof}
	By Proposition \ref{prop-good-lc}, $\rho_m(\Bir(X, B,M))$ is a finite group for $m$ sufficiently divisible. Note that $\rho_m$ induces the group homomorphism
	$$ 
	\Bir(X,B,M) \to \mathrm{Aut}(\PP(H^0(X, m(K_X + B+M)))),
	$$
	which is injective since the induced map is birational. Hence the conclusion follows.
\end{proof}

\subsection{Proof of Theorem \ref{thm-good-lc}: the horizontal case}\label{subsec-horizontal}
It remains to verify the case that there is a horizontal g-sub-lc centre, which completes the whole proof. The strategy of the proof is borrowed from \cite[Theorem 3.15]{fujino-gongyo}.

\begin{proof}[Proof of Theorem \ref{thm-good-lc}]
	It is sufficient to prove the theorem on each connected component, so we assume $X$ is irreducible. Replacing $(X, B+M)$ we may assume it is log smooth g-sub-dlt. By Lemma \ref{lem-iitaka-fib} and replacing $X$ with $\overline{X}$, we can assume $(X,B+M)$ is log smooth projective and $f:X \to Y$ is an Iitaka fibration of $K_X+B+M$ satisfying the conditions listed. By Lemma \ref{lem-klt}, we can assume $T:=B^{=1}\neq 0$. By Proposition \ref{prop-good-lc}, we can assume $T$ is not vertical over $Y$. 
	
	\noindent \textbf{Effective case.} First we assume $T^h \le mR^h$. Since there exists an effective Cartier divisor $S_Y$ such that $f^*S_Y \ge T^v$, there is an integer $m'$ such that $|m'(K_X+B+M)-T| \neq 0$. The argument for the vertical case works without any changes here.
	
	\noindent \textbf{Non-effective case.} Now we assume $T^h \nleq mR^h$. Hence, since $m(K_X + B+M) \sim_\Q f^*mD_Y+ mR$, where $\kappa(X/Y,R^h)=0$, we see $H^0(X,m(K_X+B+M)-T)=0$. Thus, the restriction map
	\begin{equation*}\tag{$\dag$}
		H^0(X,m(K_X+B+M)) \to H^0(T,m(K_T+B_T+M_T))
	\end{equation*}
	is injective, where $K_T+B_T+M_T=(K_X+B+M)|_T$. Let $(V_i, B_{V_i}+M_{V_i})$ be the disjoint
	union of all the $i$-dimensional strata of $(X, B+M)$ for $0 \le i \le n-1$. By induction on dimension,  $\rho_m(\Bir(V_i, B_{V_i},M_{V_i}))$ is a finite group. Put $k_i = |\rho_m(\Bir(V_i, B_{V_i},M_{V_i}))| < \infty$. Let $l$ be the least common multiple of $k_i$, $T = \sum_j T_j$ be the irreducible decomposition and $\varphi \in \Bir(V_i, B_{V_i},M_{V_i})$. By Lemma \ref{lem-induction}, for every $T_j$, there are strata $S^i_j$ 
    $$
    \begin{array}{cclclcc} 
    	X &\stackrel{\pi}{\dashrightarrow} & X &\stackrel{\pi}{\dashrightarrow} & X &\stackrel{\pi}{\dashrightarrow} & \cdots\\
    	\cup & & \cup & & \cup \\
    	S^0_j &  & S^1_j  &  & S^2_j 
    \end{array}
    $$
    such that $S^i_j \dashrightarrow S^{i+1}_j$ is $B$-birational and 
    $$
    H^0(T_j, m(K_{T_j} + B_{T_j} +M_{T_j} )) =H^0(S^i_j, m(K_{S^i_j} + B_{S^i_j} +M_{S^i_j} )) 
    $$
    where $K_{S^i_j} + B_{S^i_j} +M_{S^i_j}=(K_X+B+M)|_{S_j^i}$. Since there are only finitely many strata, we can find $p_j < q_j$ such that $S^{p_j}_j = S_j^{q_j}$.
    %and that Sjpj = Sjr for r = pj + 1, · · · , qj | 1. 
    Therefore, $\varphi$ induces an $B$-birational map $\varphi_j:\coprod_{p_j\le r < q_j} S_j^r \dashrightarrow \coprod_{p_j\le r < q_j} S_j^r $ for every $j$, which in turn implies $(\varphi_j^*)^l=\mathrm{id}$. In particular, $(\varphi_j^*)^l|_{S^{p_j}_j}=\mathrm{id}$. We have an embedding
    $$
     H^0(T, m(K_{T} + B_{T} +M_{T} )) \subset \bigoplus_j H^0(S^{p_j}_j, m(K_{S^{p_j}_j} + B_{S^{p_j}_j} +M_{S^{p_j}_j} )) 
    $$
    By Lemma \ref{lem-res}, there is a commutative diagram.
    $$
    \xymatrix{
    	H^0(T, m(K_{T} + B_{T} +M_{T} )) \ar[d]^{(\varphi^*)^l=\mathrm{id}} \ar@{^{(}->}[r]^{}   &  \bigoplus_j H^0(S^{p_j}_j, m(K_{S^{p_j}_j} + B_{S^{p_j}_j} +M_{S^{p_j}_j} )) \ar[d]^{(\varphi^*)^l=\mathrm{id}}\  &\\
    	H^0(T, m(K_{T} + B_{T} +M_{T} )) \ar@{^{(}->}[r]^{} &    \bigoplus_j H^0(S^{p_j}_j, m(K_{S^{p_j}_j} + B_{S^{p_j}_j} +M_{S^{p_j}_j} ))  } 
    $$
    By the inclusion ($\dag$), we deduce that every $\varphi \in \Bir(V_i, B_{V_i},M_{V_i})$ acts on $H^0(X,m(K_X+B+M))$ periodically of uniformly bounded order. Therefore, $\rho_m(\Bir(X, B,M))$ is a finite group by Burnside's Theorem (\cite[Theorem 14.9]{ueno}).
\end{proof}

%\begin{rem}
%	We note that, in all the arguments in Subsections \ref{subsec-vertical} and \ref{subsec-horizontal}, the nefness of $\overline{M}$ is not used. So, the same argument actually applies to a more general theorem than Theorem \ref{thm-good-lc}.
%\end{rem}

\subsection{Relative $B$-representations}
We generalise the theory of $B$-representations to the relative setting for more applications.

\begin{defn}[Relative $B$-representations]\label{defn-rel-B-rep}
	Let $(X/Z,B+M)$ be a g-sub-pair such that all connected components of $X$ are mapped onto the same locus of $Z$ with generic point $\eta$. Let $m$ be a positive
	integer such that $m(K_X + B+M)$ is Cartier. A $B$-birational map $\varphi \in 
	\Bir(X/Z, B,M)$ over $Z$ defines a linear automorphism of $H^0(X_{\overline{\eta}}, m(K_{X_{\overline{\eta}}} + B_{\overline{\eta}}+M_{\overline{\eta}}))$, where $X_{\overline{\eta}}$ is the geometric generic fibre of $X$ and $ B_{\overline{\eta}},M_{\overline{\eta}}$ are the restriction divisors. Thus
	we get the group homomorphism
	$$
	\rho_m : \Bir(X/Z, B,M) \to \mathrm{Aut}_{\overline{k(\eta)}}(H^0(X_{\overline{\eta}}, m(K_{X_{\overline{\eta}}} + B_{\overline{\eta}}+M_{\overline{\eta}}))).
	$$
	where $\overline{k(\eta)}$ denotes the algebraic closure of the residue field at $\eta$.
	The homomorphism $\rho_m$ is called a relative $B$-representation for $(X/Z, B+M)$. We sometimes simply denote $\rho_m(\varphi)$ by $\varphi^*$
	for $\varphi \in \Bir(X/Z, B,M)$ if there is no danger of confusion. 
\end{defn}

\begin{lem}\label{rel-lem-rep}
	Notation as above, we have $$\Bir(X/Z,B,M) \subseteq \Bir(X_{\overline{\eta}}, B_{\overline{\eta}},M_{\overline{\eta}}). $$
	In particular, we have $$\rho_m(\Bir(X/Z,B,M)) \subseteq \rho_m(\Bir(X_{\overline{\eta}}, B_{\overline{\eta}},M_{\overline{\eta}})).$$ 
\end{lem}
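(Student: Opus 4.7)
The plan is to base change the defining diagram of a $B$-birational map over $Z$ to the geometric generic fibre and verify that all conditions in Definition \ref{defn-B-bir} restrict cleanly. Fix $\varphi\in\Bir(X/Z,B,M)$ and choose a common log resolution over $Z$
$$
\xymatrix{
& \overline{X} \ar[dl]_{\alpha}\ar[dr]^{\beta} & \\
X \ar@{-->}[rr]^{\varphi} & & X
}
$$
together with canonical sections as in Definition \ref{defn-B-bir}, so that
$$
K_{\overline{X}}+\overline{B}+\overline{M}=\alpha^*(K_X+B+M), \qquad K_{\overline{X}}+\overline{B}'+\overline{M}'=\beta^*(K_X+B+M),
$$
and $\overline{B}=\overline{B}'$, $\overline{M}=\overline{M}'$.

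Next, I would base change along $\Spec\overline{k(\eta)}\to Z$. The hypothesis that every connected component of $X$ is mapped onto the same locus of $Z$ with generic point $\eta$ guarantees that both $\alpha$ and $\beta$ have a meaningful geometric generic fibre simultaneously, and because we are in characteristic zero the base change is faithfully flat. Hence $\overline{X}_{\overline{\eta}}$ is smooth, $\alpha_{\overline{\eta}},\beta_{\overline{\eta}}\colon \overline{X}_{\overline{\eta}}\to X_{\overline{\eta}}$ are projective birational morphisms, and $(\overline{X}_{\overline{\eta}},\overline{B}_{\overline{\eta}}+\overline{M}_{\overline{\eta}})$ is log smooth and resolves $(X_{\overline{\eta}},B_{\overline{\eta}}+M_{\overline{\eta}})$ via both maps. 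Since pullback of $\R$-Cartier divisors commutes with flat base change and $K_{\overline{X}}$ restricts to $K_{\overline{X}_{\overline{\eta}}}$, the equalities above restrict to
$$
K_{\overline{X}_{\overline{\eta}}}+\overline{B}_{\overline{\eta}}+\overline{M}_{\overline{\eta}}=\alpha_{\overline{\eta}}^{*}(K_{X_{\overline{\eta}}}+B_{\overline{\eta}}+M_{\overline{\eta}})=\beta_{\overline{\eta}}^{*}(K_{X_{\overline{\eta}}}+B_{\overline{\eta}}+M_{\overline{\eta}}),
$$
with the coincidence of boundary and moduli parts intact. This is precisely the condition that $\varphi_{\overline{\eta}}:=\beta_{\overline{\eta}}\circ\alpha_{\overline{\eta}}^{-1}\in\Bir(X_{\overline{\eta}},B_{\overline{\eta}},M_{\overline{\eta}})$, giving the first inclusion.

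For the second inclusion, observe that by Definition \ref{defn-rel-B-rep} the automorphism $\rho_m(\varphi)$ of $H^0(X_{\overline{\eta}},m(K_{X_{\overline{\eta}}}+B_{\overline{\eta}}+M_{\overline{\eta}}))$ is by construction the one induced by $\varphi_{\overline{\eta}}$; hence it coincides with the image of $\varphi_{\overline{\eta}}$ under the $B$-representation of $(X_{\overline{\eta}},B_{\overline{\eta}}+M_{\overline{\eta}})$, so the containment of images is tautological. The only minor point of care will be to ensure the simultaneous base change of $\alpha$ and $\beta$ is well-defined when $\varphi$ permutes the irreducible components of $X$; this is handled precisely by the standing assumption that all components dominate the same locus of $Z$, so no obstacle arises.
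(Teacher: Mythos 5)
Your proposal is correct and is essentially an expanded version of the paper's one-line proof, which simply observes that a $B$-birational map over $Z$ is uniquely determined by its restriction to the geometric generic fibre. The only point you leave implicit is that very injectivity (distinct maps over $Z$ restrict to distinct maps on $X_{\overline{\eta}}$, since the generic fibre is dense in each component), which is what upgrades your well-defined restriction homomorphism to the claimed inclusion; this is immediate, so the argument stands.
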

\begin{proof}
	This is because a $B$-birational map over $Z$ is uniquely determined by its restriction to the geometric generic fibre.
\end{proof}

\begin{defn}[Relative $B$-representations: general case]\label{defn-rel-B-rep-2}
	Let $(X/Z,B+M)$ be a g-sub-pair. Write $X= \coprod_i X_i$ where $X_i$ is the disjoint union of all connected components of $X$ mapped onto the same locus of $Z$ with generic point $\eta_i$. We define $\Bir(X/Z, B,M)=\prod_i \Bir(X_i/Z, B,M) $ and $\rho_m(\Bir(X/Z, B,M))=\prod_i \rho_m(\Bir(X_i/Z, B,M))$.
\end{defn}

\begin{thm}\label{thm-finiteness}
	Let $(X/Z, B+M)$ be a g-sub-lc pair with general data $\overline{M}$. Let $m \ge 2$ be an integer such that $m(K_X + B+M), m\overline{M}$ are Cartier.
	Then $\rho_m(\Bir(X/Z, B,M))$ is a finite group.
\end{thm}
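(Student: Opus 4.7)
The plan is to reduce the relative statement to the absolute Theorem \ref{thm-good-lc} by passing to the geometric generic fibre. First, by Definition \ref{defn-rel-B-rep-2} we decompose $X = \coprod_i X_i$ into the obvious pieces, and since $\rho_m(\Bir(X/Z,B,M)) = \prod_i \rho_m(\Bir(X_i/Z,B,M))$ it suffices to prove finiteness of each factor. So we may assume that all connected components of $X$ are mapped onto the same subvariety of $Z$ with generic point $\eta$, so that Definition \ref{defn-rel-B-rep} applies.

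Next I would invoke Lemma \ref{rel-lem-rep} to obtain the inclusion
$$
\rho_m(\Bir(X/Z,B,M)) \subseteq \rho_m\bigl(\Bir(X_{\overline{\eta}},B_{\overline{\eta}},M_{\overline{\eta}})\bigr),
$$
so it is enough to show that the right-hand side is finite. Note that $\overline{k(\eta)}$ is an algebraically closed field of characteristic zero, and Theorem \ref{thm-good-lc} is stated over any such base by the general convention of the paper. Thus the task becomes verifying that $(X_{\overline{\eta}},B_{\overline{\eta}}+M_{\overline{\eta}})$ is a proper g-sub-lc pair with general data $\overline{M}_{\overline{\eta}}$ and that $m(K_{X_{\overline{\eta}}} + B_{\overline{\eta}} + M_{\overline{\eta}})$ together with $m\overline{M}_{\overline{\eta}}$ remain Cartier.

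The bulk of the argument is then a routine base-change verification. Properness of $X_{\overline{\eta}} \to \Spec\overline{k(\eta)}$ follows from properness of $X \to Z$. Log smoothness of the chosen resolution $(\overline{X},\overline{B}+\overline{M}) \to X$ is preserved by flat base change along $\Spec\overline{k(\eta)} \to Z$, and by the same token the b-nefness descent to $\overline{X}_{\overline{\eta}}$ holds. Cartierness of $m(K+B+M)$ and of $m\overline{M}$ is likewise preserved. For the three conditions of Definition \ref{defn-g-data} defining generality: quasi-projectivity and log smoothness are preserved by flat base change (condition (1)); the strata of $(\overline{X}_{\overline{\eta}},\overline{B}_{\overline{\eta}}+\overline{M}_{\overline{\eta}})$ are exactly the base changes of the strata of $(\overline{X},\overline{B}+\overline{M})$, so the support condition (2) is inherited from the relative setting; and condition (3) on sub-klt-ness of the $\epsilon$-perturbation is again preserved because log discrepancies are read off from coefficients which are unchanged by base change.

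The only potential obstacle is ensuring that the geometric generic fibre is genuinely of the form to which Theorem \ref{thm-good-lc} applies, in particular that g-sub-lc is preserved under base change to $\overline{k(\eta)}$; but this is a consequence of the preceding log smoothness and coefficient preservation, since the coefficients of $\overline{B}_{\overline{\eta}}$ coincide with those of $\overline{B}$ along the components that survive base change. Once these verifications are in place, Theorem \ref{thm-good-lc} produces finiteness of $\rho_m(\Bir(X_{\overline{\eta}},B_{\overline{\eta}},M_{\overline{\eta}}))$, and the inclusion above yields the desired conclusion.
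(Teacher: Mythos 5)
Your proposal is correct and follows essentially the same route as the paper, whose proof is simply ``Combine Lemma \ref{rel-lem-rep} and Theorem \ref{thm-good-lc}'': reduce to the geometric generic fibre and apply the absolute finiteness theorem over $\overline{k(\eta)}$, which is legitimate since the paper notes (via the Lefschetz principle and flat base change) that the underlying finiteness results hold over any algebraically closed field of characteristic zero. Your additional base-change verifications are sound and merely make explicit what the paper leaves implicit.
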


\begin{proof}
	Combine Lemma \ref{rel-lem-rep} and Theorem \ref{thm-good-lc}.
\end{proof}

\begin{cor}[\text{cf.\cite[Corollary 3.13]{fujino-gongyo}}]
	Let $(X/Z, B+M)$ be a g-sub-lc pair with general data such that $K_X +B+M$
	is big$/Z$. Then $\Bir(X/Z, B,M)$ is a finite group.	
\end{cor}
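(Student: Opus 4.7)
The strategy is to bootstrap from the already-proved absolute case (Corollary \ref{cor-B-rep}) using the inclusion of the relative birational group into the absolute birational group of the geometric generic fibre provided by Lemma \ref{rel-lem-rep}.

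First, by Definition \ref{defn-rel-B-rep-2}, decompose $X=\coprod_i X_i$ where each $X_i$ collects the connected components of $X$ mapping onto a common image in $Z$ with generic point $\eta_i$; this yields $\Bir(X/Z,B,M)=\prod_i \Bir(X_i/Z,B,M)$, a finite product. It therefore suffices to show that each factor is finite. Fix one index, drop it, and let $\eta$ denote the generic point of the image of $X$ in $Z$. By Lemma \ref{rel-lem-rep} we have
$$
\Bir(X/Z,B,M)\subseteq \Bir(X_{\overline{\eta}},\, B_{\overline{\eta}},\, M_{\overline{\eta}}),
$$
where $X_{\overline{\eta}}$ is the geometric generic fibre, proper over the algebraically closed field $\overline{k(\eta)}$.

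Next, I verify that the restricted pair $(X_{\overline{\eta}},B_{\overline{\eta}}+M_{\overline{\eta}})$ meets the hypotheses of Corollary \ref{cor-B-rep}. Taking a log resolution $(\overline{X},\overline{B}+\overline{M})\to X$ carrying the general datum $\overline{M}$, generic smoothness of $\overline{X}\to Z$ ensures that the restriction $(\overline{X}_{\overline{\eta}},\overline{B}_{\overline{\eta}}+\overline{M}_{\overline{\eta}})$ is a quasi-projective log smooth resolution of $(X_{\overline{\eta}},B_{\overline{\eta}}+M_{\overline{\eta}})$. The condition that $\Supp\overline{M}$ contains no stratum restricts faithfully: a stratum of $\overline{X}_{\overline{\eta}}$ is the geometric generic fibre of a stratum $V\subseteq \overline{X}$ dominant over the image of $X$, and since $V\not\subseteq\Supp\overline{M}$ the generic fibre of $V$ meets $\Supp\overline{M}$ properly. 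Likewise, the sub-klt condition for $(\overline{X},\overline{B}-\epsilon\overline{B}^{=1}+\overline{M})$ passes to the geometric generic fibre. Finally, bigness of $K_X+B+M$ over $Z$ translates directly to absolute bigness of $K_{X_{\overline{\eta}}}+B_{\overline{\eta}}+M_{\overline{\eta}}$.

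With these verifications in hand, Corollary \ref{cor-B-rep} applied to $(X_{\overline{\eta}},B_{\overline{\eta}}+M_{\overline{\eta}})$ shows that $\Bir(X_{\overline{\eta}},B_{\overline{\eta}},M_{\overline{\eta}})$ is a finite group, and hence so is $\Bir(X/Z,B,M)$ by the inclusion above. The main point requiring care is the bookkeeping in the previous paragraph—that the three defining conditions of \emph{general datum} (Definition \ref{defn-g-data}) are inherited by the restriction to the geometric generic fibre, and that relative bigness restricts to absolute bigness there—but these are routine consequences of generic smoothness and standard base-change properties, so no genuinely new input is needed beyond Lemma \ref{rel-lem-rep} and Corollary \ref{cor-B-rep}.
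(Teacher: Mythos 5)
Your proposal is correct and follows exactly the route the paper takes: its proof is the one-line ``Combine Lemma \ref{rel-lem-rep} and Corollary \ref{cor-B-rep}'', i.e.\ the inclusion of $\Bir(X/Z,B,M)$ into the birational group of the geometric generic fibre followed by the absolute bigness case. Your additional bookkeeping, checking that the general-datum conditions and bigness restrict to the geometric generic fibre, is a reasonable elaboration of details the paper leaves implicit.
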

\begin{proof}
	Combine Lemma \ref{rel-lem-rep} and Corollary \ref{cor-B-rep}.
\end{proof}

\section{Abundance for generalised pairs}\label{sec4}
In this section we prove abundance theorems for g-slc pairs and for g-dlt pairs. We follow the strategy of \cite{fujino1} is to glue admissible sections on g-dlt pairs.

\subsection{Global generation of admissible sections}\label{sec4-1}
The aim of this subsection is to prove Theorem \ref{thm-adm-gen}, following the strategy of \cite[Section 4]{fujino1}. We remark that the arguments in \cite[Section 4]{fujino1} can be generalised to the relative setting without many changes. Also note that \cite{haconxu-lcc}\cite{haconxu} provide a different approach, based on Koll\'ar's glueing theory, which might also be able to work for g-pairs. For the reader's convenience, we present a self-contained proof here.

Throughout this subsection, all varieties are assumed to be over an integral affine variety $\Spec A$, and all divisors are $\Q$-divisors, unless stated otherwise. We say a variety is \emph{proper} (resp. \emph{projective}) if it is proper (resp. projective) over $\Spec A$. We simply write $\Bir(X, B,M)$ for $\Bir(X/Z, B,M)$ if there is no confusion.

We believe it is not difficult to generalise all results in this subsection over an arbitrary variety $Z$.

\begin{defn}[Pre-admissible and admissible, \text{cf.\cite[Definition 4.1]{fujino1}}]\label{defn-adim-sec}
	Let $(X,B+M)$ be a projective g-sdlt pair with very general data $\overline{M}$, $X=\bigcup_i X_i$ be the irreducible decomposition, and let $m$ be a sufficiently large
	and divisible integer. We define admissible and pre-admissible sections inductively on
	dimension:
	\begin{itemize}
		\item $s\in H^0(X,m(K_X+B+M))$ is pre-admissible if the restriction $s|_{S} \in H^0(S,m(K_S+B_S+M_S))$ is admissible, where $S=\rddown{B}$ and $K_S+B_S+M_S=(K_X+B+M)|_S$;
		
		\item $s\in H^0(X,m(K_X+B+M))$ is admissible if it is pre-admissible and $\varphi^*(s|_{X_j})=(s|_{X_i})$ for every $B$-birational map $\varphi:(X_i,B_i+M_i) \dashrightarrow (X_j,B_j+M_j)$ over $\Spec A$ and for every $i,j$.
	\end{itemize}
    We define linear subspaces of $H^0(X,m(K_X+B+M))$ as follows:
$$
\mathcal{PA}(X,m(K_X+B+M)):= \{\text{$s$ is pre-admissible}\}
$$
and
$$
\mathcal{A}(X,m(K_X+B+M)):= \{\text{$s$ is admissible}\}
$$
\end{defn}

We note that the above definition is well-defined by Lemma \ref{lem-g-sdlt}(3).

\begin{rem}\label{rem-B-bir}
	Notation as above, for every $X_i$, we denote by $X_{i,\overline{\eta}}$ the geometric generic fibre over $\Spec A$. Since $\varphi$ and $s$ are uniquely determined by their restrictions to the geometric generic fibre, it follows that, $s$ is admissible if and only if it is pre-admissible and $\varphi_{\overline{\eta}}^*(s|_{X_{j,\overline{\eta}}})=(s|_{X_{i,\overline{\eta}}})$ for every $B$-birational map $\varphi :(X_{i},B_{i}+M_{i}) \dashrightarrow (X_{j},B_{j}+M_{j})$ over $\Spec A$. In particular, such $s|_{X_{i}}$ is $\Bir(X_{i},B_{i},M_{i})$-invariant for every $i$.		
\end{rem}

The following lemma is direct by definition.
\begin{lem}[\text{cf.\cite[Lemma 4.2]{fujino1}}]\label{lem-descend}
	Let $(X,B+M)$ be a proper g-slc pair with general data $\overline{M}$, $\nu : X^\nu \to X$ be the normalization, and
	$K_{X^\nu} +B^\nu +M^\nu:= \nu^*(K_X +B+M)$. Let $\pi: (X',B'+M') \to (X^\nu,B^\nu +M^\nu)$ be a proper birational morphism
	such that $(X',B'+M')$ is g-dlt with $\overline{M}$ very general, and $K_{X'}+B'+M'= \pi^*(K_{X^\nu} +B^\nu +M^\nu)$. Then any section $s\in \mathcal{PA}(X,m(K_{X'}+B'+M'))$ descends
	to a section on $(X,B+M)$. Moreover, we have:
	\begin{enumerate}
		\item If $(X,B+M)$ is projective g-sdlt with $\overline{M}$ very general, then $s$ descends to $\mathcal{PA}(X,m(K_X+B+M))$.
		
		\item If further $s \in \mathcal{A}(X,m(K_{X'}+B'+M'))$, then $s$ descends to $\mathcal{A}(X,m(K_X+B+M))$.
	\end{enumerate}
\end{lem}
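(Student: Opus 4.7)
The plan is to perform the descent in two stages: first from $X'$ down to the normalization $X^\nu$ along $\pi$, and then across the conductor from $X^\nu$ down to $X$ along $\nu$.

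For the first stage, since $\pi:(X',B'+M')\to(X^\nu,B^\nu+M^\nu)$ is crepant birational between normal varieties and $m(K_{X'}+B'+M')=\pi^*m(K_{X^\nu}+B^\nu+M^\nu)$ is Cartier, a section of $m(K_{X'}+B'+M')$ is the same datum as a rational function $f$ on $X'=X^\nu$ (birationally) with $(f)+m(K_{X'}+B'+M')\ge 0$, which by crepancy is equivalent to $(f)+m(K_{X^\nu}+B^\nu+M^\nu)\ge 0$. Hence $\pi$ induces a canonical isomorphism
\[
H^0(X',m(K_{X'}+B'+M'))\;\cong\;H^0(X^\nu,m(K_{X^\nu}+B^\nu+M^\nu)),
\]
and $s$ pushes forward to a unique section $s^\nu$.

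For the second stage, use that $X$ is $S_2$ and normal crossing in codimension one: a section of $m(K_X+B+M)$ is the same data as a collection $(s^\nu_i)$ of sections on the components $X_i$ of $X^\nu$ that agree along the conductor under the involutions prescribed by the g-slc structure. The conductor is a reduced divisor supported on components of $\lfloor B^\nu\rfloor$, and by Condition (2) of Definition \ref{defn-g-slc-pair} the gluing involutions pull back the data $\overline{M}$ compatibly from both sides, hence qualify as $B$-birational maps (in the sense of Definition \ref{defn-B-bir}) between components of $\lfloor B^\nu\rfloor$ after lifting to a common log resolution. Since $s$ is pre-admissible, $s|_{\lfloor B'\rfloor}$ is admissible; by stage one its descent $s^\nu|_{\lfloor B^\nu\rfloor}$ is invariant under every $B$-birational map, in particular under these conductor involutions. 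Therefore $s^\nu$ glues to the desired section $s_X\in H^0(X,m(K_X+B+M))$.

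For the moreover part, assume $(X,B+M)$ is projective g-sdlt with very general data. Lemma \ref{lem-g-sdlt}(3) says that $(S,B_S+M_S)$ with $S=\lfloor B\rfloor$ is g-sdlt, and by construction $s_X|_S$ is the descent (along $\lfloor B'\rfloor\to S^\nu\to S$) of the admissible section $s|_{\lfloor B'\rfloor}$, so $s_X|_S$ is admissible on $(S,B_S+M_S)$, proving (1). For (2), Lemma \ref{lem-B-rep}(1) identifies $\Bir(X_i,B_i,M_i)=\Bir(X'_i,B'_i,M'_i)$ componentwise; invariance of $s$ under the latter therefore transports to invariance of $s_X$ under the former, yielding admissibility.

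The main obstacle is the second stage: one must verify that the abstract $B$-birational invariance supplied by the admissibility of $s|_{\lfloor B'\rfloor}$ really subsumes the specific conductor involutions used to glue $X^\nu$ to $X$. This is exactly the role of Condition (2) of Definition \ref{defn-g-slc-pair}, which guarantees that $\overline{M}$ is pulled back compatibly across every conductor pairing; without it, the involutions would fail to be $B$-birational in the generalised sense and the descent along $\nu$ would break down.
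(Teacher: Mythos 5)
Your argument is correct and is precisely the descent mechanism the paper has in mind: the paper itself omits the proof entirely, remarking only that the lemma is ``direct by definition.'' Your two-stage factorisation --- crepant pushforward along $\pi$ (using $\pi_*\mathcal{O}_{X'}=\mathcal{O}_{X^\nu}$), then gluing along the conductor via the $B$-birational invariance packaged in admissibility of $s|_{\rddown{B'}}$, with Condition (2) of Definition \ref{defn-g-slc-pair} guaranteeing that the conductor involutions are indeed $B$-birational --- is exactly the content being invoked, and your treatment of (1) and (2) via Lemma \ref{lem-g-sdlt}(3) and Lemma \ref{lem-B-rep}(1) matches the intended induction on dimension.
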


The next theorem is the main result of this subsection.

\begin{thm}\label{thm-adm-gen}
   Let $(X,B+M)$ be a projective $\Q$-factorial g-sdlt pair with very general data. Let
   $m$ be a sufficiently large and divisible integer. Assume that $K_X +B+M$ is semi-ample. Then, $\mathcal{A}(X,m(K_X+B+M))$ generates $\mathcal{O}_X (m(K_X+B+M))$. 
\end{thm}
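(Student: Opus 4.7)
The plan is to argue by induction on $\dim X$, following the strategy of Fujino \cite{fujino1} and using the finiteness of $B$-representations proved in Theorem \ref{thm-good-lc}. Since $|m(K_X+B+M)|$ is base-point-free for $m$ sufficiently large and divisible, it suffices to show that for every closed point $x \in X$ there exists an admissible section $s$ with $s(x) \neq 0$; the base case $\dim X = 0$ is trivial. Write $S := \rddown{B}$, set $G := \rho_m(\Bir(X, B, M))$, and define the projection $P := \frac{1}{|G|} \sum_{\sigma \in G} \sigma^*$. By Theorem \ref{thm-good-lc} the group $G$ is finite, so $P$ is a well-defined projection onto the $G$-invariant subspace of $H^0(X, m(K_X + B + M))$.

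The induction rests on two observations. First, by a Kawamata--Viehweg--Fujino vanishing applied to the decomposition $m(K_X+B+M) - S = (m-1)(K_X+B+M) + (K_X + (B-S) + M)$, where $(X, (B-S) + M)$ is g-klt (the ``very general'' hypothesis on $\overline{M}$ ensures this) and $(m-1)(K_X+B+M)$ is nef and semi-ample, the restriction
\[ r : H^0(X, m(K_X+B+M)) \twoheadrightarrow H^0(S, m(K_S+B_S+M_S)) \]
is surjective for $m$ sufficiently large and divisible. Second, the operator $P$ maps pre-admissible sections to admissible ones: every $\sigma \in \Bir(X, B, M)$ restricts to a $B$-birational map between irreducible components of $S$ via Lemmas \ref{lem-stratum-1} and \ref{lem-induction}, and an admissible $\tilde{s}|_S$ is by definition invariant under exactly such maps; hence $r(\sigma^* \tilde{s}) = (\sigma|_S)^* r(\tilde{s}) = r(\tilde{s})$, so $P$ preserves pre-admissibility, and combined with $G$-invariance it produces admissible sections. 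For $x \in S$, Lemma \ref{lem-g-sdlt}(3) and generalised adjunction present $(S, B_S+M_S)$ as a projective $\Q$-factorial g-sdlt pair with very general data (replacing the datum by a suitable $\R$-linearly equivalent one as in Remark \ref{rem-g-data} if necessary), of strictly smaller dimension and with $K_S+B_S+M_S$ semi-ample. The inductive hypothesis provides $s_S \in \mathcal{A}(S, m(K_S+B_S+M_S))$ with $s_S(x) \neq 0$; lifting $s_S$ via $r$ to a pre-admissible $\tilde{s}$ and averaging, $P(\tilde{s})$ is admissible and satisfies $P(\tilde{s})|_S = s_S$ by invariance of $s_S$, so $P(\tilde{s})(x) = s_S(x) \neq 0$.

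For $x \notin S$, the subspace $H^0(X, m(K_X+B+M) - S)$ consists of trivially pre-admissible sections and is $G$-invariant. Since $m(K_X+B+M) - S$ is base-point-free at $x$ for $m$ sufficiently large (the semi-ampleness of $K_X+B+M$ dominates the fixed divisor $S$ away from $S$ itself), there exist sections $t$ with $t(x) \neq 0$. The hard part of the argument is to ensure that admissible sections still separate such $x$: a priori the average $P(t)$ can vanish at $x$ through cancellation among the $G$-translates on the orbit $G \cdot x$. I would resolve this by a general-position argument: choose $t$ vanishing along $G\cdot x \setminus \{x\}$ (feasible via Serre-type separation once $m$ is large, using the finiteness of $G$), or alternatively form the product $\prod_{\sigma \in G} \sigma^* t$ to obtain a $G$-invariant section in $|G| \cdot m(K_X+B+M) - |G| \cdot S$ nonvanishing at $x$, and absorb the factor $|G|$ into the divisibility requirement on $m$. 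This final step, controlling cancellation through the orbit $G \cdot x$, is the main obstacle, and it is precisely where the finiteness of $B$-representations (Theorem \ref{thm-good-lc}) is indispensable.
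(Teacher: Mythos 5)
Your overall skeleton (induction on dimension, lift admissible sections from $S=\rddown{B}$, then symmetrise using the finiteness of $G=\rho_m(\Bir(X,B,M))$ from Theorem \ref{thm-good-lc}) matches the paper's, but two of your key steps are wrong, and they are precisely the places where the real work happens.

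First, the surjectivity of $r\colon H^0(X,m(K_X+B+M))\to H^0(S,m(K_S+B_S+M_S))$ is false in general, and your vanishing argument cannot establish it: $K_X+B+M$ is only semi-ample, not big, so Kawamata--Viehweg does not give $H^1(X,m(K_X+B+M)-S)=0$ from the decomposition you write. Concretely, if $f\colon X\to Y$ is the map to the ample model and $f$ contracts a disconnected $S$ to a point, then $H^0(S,m(K_S+B_S+M_S))=H^0(S,\mathcal{O}_S)$ has dimension equal to the number of connected components of $S$, while the image of $r$ is the one-dimensional diagonal. What is actually true, and what the theorem needs, is the surjectivity of $\mathcal{PA}(X,m(K_X+B+M))\to\mathcal{A}(S,m(K_S+B_S+M_S))$ (Proposition \ref{prop-adm-gen}): admissible sections are exactly those constrained enough to extend. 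Proving this is the technical core of the paper's argument. When $f|_S$ has connected fibres one descends the section to $Y$ via the rigidity lemma (Lemma \ref{lem-connected-fibre}); when $f|_S$ has a disconnected fibre one produces a $\PP^1$-fibration $X\dashrightarrow T/Y$ whose horizontal boundary is a degree-two cover of $T$ carrying a $B$-birational involution (Proposition \ref{prop-disconnected-fibre}, Lemma \ref{lem-horizontal-B}) and descends the admissible section along this double cover (Lemmas \ref{lem-descent} and \ref{lem-disconnected-fibre}). None of this appears in your proposal.

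Second, your treatment of $x\notin S$ does not close the cancellation gap you correctly identify. The group $G$ acts linearly on $H^0$, not on $X$, so ``$G\cdot x$'' is undefined; and even after choosing representatives $\varphi_j\in\Bir(X,B,M)$, the system $|m(K_X+B+M)|$ separates points only through the ample model, so you cannot in general find $t$ vanishing at the $\varphi_j(x)$ but not at $x$ (when $\kappa=0$ it separates nothing). The product $\prod_{\sigma\in G}\sigma^*t$ fails for the opposite reason: it vanishes at $x$ as soon as any single translate $\sigma^*t$ does. The paper's fix (Lemma \ref{lem-A-2}) is to use all the elementary symmetric functions $\sigma_1,\dots,\sigma_n$ of the translates $\varphi_1^*s,\dots,\varphi_n^*s$, $n=|G|$: if every $\sigma_i$ vanished at $x$ then every root of $T^n-\sigma_1T^{n-1}+\cdots\pm\sigma_n$, in particular $s$ itself, would vanish at $x$; and each $\sigma_i^{n!/i}$ is admissible of weight $n!\,m$ by Lemma \ref{lem-A-1}. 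This symmetric-function argument is the missing idea; your averaging operator $P$ works on $S$ (that is the content of Lemma \ref{lem-A-1}) but not off $S$.
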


In order to prove Theorem \ref{thm-adm-gen}, we divide the argument into two steps. We first treat the next proposition.

\begin{prop}[\text{cf.\cite[Proposition 4.5]{fujino1}}]\label{prop-adm-gen}
	Let $(X,B+M)$ be a projective $\Q$-factorial g-dlt pair with very general data. Let
	$m$ be a sufficiently large and divisible integer. Assume that
	\begin{enumerate}
		\item $K_X +B+M$ is semi-ample.
		
		\item $\mathcal{A}(S,m(K_S+B_S+M_S))$ generates $\mathcal{O}_S(m(K_S+B_S+M_S))$, where $S=\rddown{B}$ and $K_S+B_S+M_S=(K_X+B+M)|_S$.
	\end{enumerate}
	Then the restriction $$\mathcal{PA}(X,m(K_X+B+M))\to \mathcal{A}(S,m(K_S+B_S+M_S))$$ is surjective, and $\mathcal{PA}(X,m(K_X+B+M))$ generates $\mathcal{O}_X (m(K_X+B+M))$.
\end{prop}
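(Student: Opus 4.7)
The plan is to first establish surjectivity of the restriction $\mathcal{PA}(X,m(K_X+B+M))\to\mathcal{A}(S,m(K_S+B_S+M_S))$ by a vanishing theorem, then deduce the generation statement by a case analysis on the image of a point under the semi-ample contraction. By hypothesis (1) and $m$ sufficiently divisible, I would first fix a contraction $\phi\colon X\to V$ over $\Spec A$ with $m(K_X+B+M)\sim\phi^*A_m$ for a very ample Cartier divisor $A_m$ on $V$. Since $\phi_*\mathcal{O}_X=\mathcal{O}_V$, every section of $H^0(X,m(K_X+B+M))$ is the pullback of a unique section of $H^0(V,A_m)$, a fact used repeatedly below.

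For surjectivity, I would take cohomology in the exact sequence
\[
0\to\mathcal{O}_X(m(K_X+B+M)-S)\to\mathcal{O}_X(m(K_X+B+M))\to\mathcal{O}_S(m(K_S+B_S+M_S))\to 0,
\]
and use the identity
\[
m(K_X+B+M)-S\;=\;K_X+(B-S)+M+(m-1)(K_X+B+M),
\]
where $(X,(B-S)+M)$ is g-klt (the floor $S$ of $(X,B+M)$ has been removed) and $(m-1)(K_X+B+M)$ is the pullback via $\phi$ of an ample $\Q$-divisor on $V$. A Koll\'ar-type vanishing for g-klt pairs twisted by the pullback of an ample divisor -- equivalently, relative Kawamata-Viehweg for $\phi$ combined with Serre vanishing on $V$ -- would then give $H^1(X,m(K_X+B+M)-S)=0$, so the restriction on $H^0$ is surjective. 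Any such lift of an admissible section on $S$ is automatically pre-admissible, since pre-admissibility is defined purely by restriction to $S$.

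For generation, I would fix $x\in X$ and split into cases according to whether $\phi(x)\in\phi(S)$. If $\phi(x)\notin\phi(S)$, ampleness of $A_m$ supplies a section $\tau\in H^0(V,A_m)$ vanishing on $\phi(S)$ but not at $\phi(x)$; its pullback $\phi^*\tau$ vanishes on $S$, hence is trivially pre-admissible, and is non-zero at $x$. If $\phi(x)\in\phi(S)$, I pick any $s\in S\cap\phi^{-1}(\phi(x))$; hypothesis (2) supplies $\sigma\in\mathcal{A}(S,m(K_S+B_S+M_S))$ with $\sigma(s)\neq 0$, and by the surjectivity just proved $\sigma$ lifts to some $\tilde\sigma\in\mathcal{PA}(X,m(K_X+B+M))$. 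Writing $\tilde\sigma=\phi^*\tilde\tau$ and using $\phi(x)=\phi(s)$, the equality $\tilde\sigma(s)=\tilde\tau(\phi(s))\neq 0$ forces $\tilde\sigma(x)=\tilde\tau(\phi(x))\neq 0$. Hence $\mathcal{PA}(X,m(K_X+B+M))$ generates at every point.

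The main obstacle is the vanishing $H^1(X,m(K_X+B+M)-S)=0$: because $(m-1)(K_X+B+M)$ is only semi-ample and not a priori big, this is outside the reach of ordinary Kawamata-Viehweg. The natural tool is the Koll\'ar-Ambro-Fujino vanishing (or injectivity) theorem for g-lc pairs, applied to the g-klt pair $(X,(B-S)+M)$ with the semi-ample twist $(m-1)(K_X+B+M)$; such a statement is standard in the generalized pair literature and can in any case be extracted by combining relative Kawamata-Viehweg along $\phi$ with Serre vanishing on $V$ via the Leray spectral sequence.
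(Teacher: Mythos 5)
There is a genuine gap, and it sits exactly at the point you flag as ``the main obstacle'': the vanishing $H^1(X,m(K_X+B+M)-S)=0$ is false in general, and no injectivity or Koll\'ar--Ambro--Fujino type theorem will rescue it. Writing $m(K_X+B+M)\sim\phi^*A_m$, the divisor $(m-1)(K_X+B+M)$ is the pullback of an ample divisor under a contraction $\phi$ that is typically of positive fibre dimension, so it is $\phi$-trivial: relative Kawamata--Viehweg does not kill $R^1\phi_*$, and in the Leray spectral sequence the term $H^0(V,R^1\phi_*\mathcal{O}_X(m(K_X+B+M)-S))$ survives. Concretely, if $S$ is a double section of $\phi$ (say $S$ elliptic mapping $2:1$ onto $V=\PP^1$ and $A_m$ of degree $d$), then $H^0(X,\phi^*A_m)=H^0(V,A_m)$ has dimension $d+1$ while $H^0(S,(\phi|_S)^*A_m)$ has dimension $2d$, so the restriction map on full $H^0$ is \emph{not} surjective and $H^1(X,\phi^*A_m-S)\neq 0$. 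Your argument proves surjectivity onto all of $H^0(S,m(K_S+B_S+M_S))$, which is strictly stronger than the proposition and false. The restriction to \emph{admissible} sections in the statement is not cosmetic: it is precisely the hypothesis that makes lifting possible, and an argument that never uses admissibility (beyond the trivial remark that a lift of an admissible section is pre-admissible) cannot be correct. Note the contrast with Theorem \ref{thm-lc-flips-3}, Step 2, where the author does run an injectivity-theorem argument — but only because of the extra hypothesis that $K_X+B+M-\epsilon P$ is semi-ample with $\Supp P=\rddown{B}$, which supplies positivity transverse to $S$ that is absent here.

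The paper's proof instead splits on whether $f|_S$ has connected fibres ($f:X\to Y$ the semi-ample contraction). In the connected case (Lemma \ref{lem-connected-fibre}) the lift is constructed directly as a pullback from $Y$ via the rigidity lemma, with no vanishing theorem; your case analysis on $\phi(x)\in\phi(S)$ versus $\phi(x)\notin\phi(S)$ is essentially this argument and is fine \emph{granted} a lift exists. In the disconnected case (Lemma \ref{lem-disconnected-fibre}) one extracts, via an MMP, a $\PP^1$-fibration $g:X\dashrightarrow T$ over $Y$ for which $S^h\to T$ is generically $2:1$ (Proposition \ref{prop-disconnected-fibre}); an admissible section is invariant under the resulting $B$-birational involution of $S^h$ over $T$, hence descends to a meromorphic section on $T$ (Lemma \ref{lem-descent}) whose pullback to $X$ is the desired lift, with agreement on $S^v$ checked via the strata dominating the components of $g(\overline{S}^v)$. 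To repair your proof you would need to replace the vanishing step by this descent mechanism (or an equivalent use of the involution-invariance of admissible sections); the rest of your outline can then be kept.
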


Let $f:X \to Y$ be the morphism induced by the semi-ample divisor $K_X +B+M$. We divided the proof of Proposition \ref{prop-adm-gen} into two cases.\\

\noindent \textbf{Case 1: $f|_S$ has connected fibres.}
\begin{lem}[\text{\cite[Lemma 4.4]{fujino1}}]\label{lem-contraction}
	Let $(X,B+M)$ be a g-klt pair, $S$ an effective $\Z$-divisor and $f: X\to Y$ a proper morphism with connected fibres such that $K_X+B+S+M\sim_\Q 0/Y$. If $S$ is vertical$/Y$, then $f|_S$ has connected fibres.
\end{lem}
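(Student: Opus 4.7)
My plan follows the strategy of \cite[Lemma 4.4]{fujino1}. Since $f$ has connected fibres, $f_*\mathcal{O}_X = \mathcal{O}_Y$, so pushing forward the ideal-sheaf sequence $0 \to \mathcal{O}_X(-S) \to \mathcal{O}_X \to \mathcal{O}_S \to 0$ yields
$$
\mathcal{O}_Y \longrightarrow f_*\mathcal{O}_S \longrightarrow R^1 f_*\mathcal{O}_X(-S) \longrightarrow 0 .
$$
Once I show that $R^1 f_*\mathcal{O}_X(-S) = 0$, the sheaf $f_*\mathcal{O}_S$ is a quotient of $\mathcal{O}_Y$ and therefore coincides with $\mathcal{O}_{f(S)}$ for the scheme-theoretic image $f(S) \subset Y$; equivalently, $f|_S$ has connected fibres.

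The hypothesis $K_X + B + S + M \sim_\Q 0/Y$ rewrites as
$$
-S \;\sim_\Q\; K_X + B + M + f^*D_Y
$$
for some $\Q$-Cartier divisor $D_Y$ on $Y$, so the twist $-S - (K_X + B + M) \sim_\Q f^*D_Y$ is $f$-numerically trivial, hence $f$-nef and $f$-abundant in the sense of Definition \ref{defn--abund}. I would then pass to a log resolution $\pi\colon\overline{X}\to X$ of $(X, B+S)$ on which $\mathbf{M}$ descends and write $K_{\overline{X}} + \overline{B} + \overline{M} = \pi^*(K_X+B+M)$, with $(\overline{X}, \overline{B}+\overline{M})$ log smooth sub-klt. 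Standard rounding manipulations, combining the strict transform of $-S$ with the integral part of $-\overline{B}$, produce an integral Cartier divisor $L$ on $\overline{X}$ with $\pi_*\mathcal{O}_{\overline{X}}(L) = \mathcal{O}_X(-S)$, $R^i\pi_*\mathcal{O}_{\overline{X}}(L) = 0$ for $i>0$, and an expression $L - (K_{\overline{X}} + \Delta + \overline{M}) \sim_\Q (f\circ\pi)^*D_Y$ for a log smooth klt boundary $\Delta$. Applying the Kawamata-Viehweg vanishing theorem for $(f\circ\pi)$-nef and $(f\circ\pi)$-abundant divisors on the sub-klt pair $(\overline{X}, \Delta + \overline{M})$ gives $R^i(f\circ\pi)_*\mathcal{O}_{\overline{X}}(L) = 0$ for $i>0$, and the Leray spectral sequence delivers $R^1 f_*\mathcal{O}_X(-S) = 0$.

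The main obstacle is this vanishing step: since $f^*D_Y$ is merely $f$-trivial and not $f$-big, naive relative Kawamata-Viehweg does not apply. The rescue is the ``nef and abundant'' strengthening of Kawamata-Viehweg (in the spirit of Kawamata and Ambro-Fujino), which in the generalised-klt setting reduces to the classical statement on $\overline{X}$ once $\mathbf{M}$ is realised as a nef $\Q$-Cartier divisor there. Verticality and effectiveness of $S$ enter only to guarantee that $f(S)$ is a proper closed subscheme of $Y$ (so ``$f|_S$ has connected fibres'' is the natural formulation) and that the rounding operations on $\overline{X}$ do not cancel any component of $S$, so that the push-down reproduces $\mathcal{O}_X(-S)$ exactly.
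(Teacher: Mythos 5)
Your overall plan is Fujino's original argument for \cite[Lemma 4.4]{fujino1} (push forward the ideal-sheaf sequence and feed in a cohomological statement about $R^1f_*\mathcal{O}_X(-S)$), whereas the paper disposes of the lemma in one line by quoting Birkar's connectedness principle for generalised pairs, \cite[Theorem 3.1(2)]{birkar-connect}. The route is legitimate in principle, but the cohomological input you propose is the wrong one, and the step you yourself flag as ``the main obstacle'' cannot be repaired in the way you suggest.

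Concretely, the vanishing $R^1f_*\mathcal{O}_X(-S)=0$ is \emph{false} under the stated hypotheses, already with $M=0$: take $f\colon X\to C$ a relatively minimal elliptic surface without multiple fibres, $B=0$, and $S=F$ a smooth fibre over a point $P$. Then $K_X+F\sim_\Q 0/C$ and $S$ is vertical, but $\mathcal{O}_X(-F)=f^*\mathcal{O}_C(-P)$, so $R^1f_*\mathcal{O}_X(-F)=\mathcal{O}_C(-P)\otimes R^1f_*\mathcal{O}_X\neq 0$. The structural reason is that the excess positivity of $-S-(K_X+B+M)$ over $Y$ is only $\overline{M}$ plus a relatively trivial pullback: this is relatively nef but neither relatively big nor, under the hypotheses of the lemma, relatively abundant (no abundance is assumed on the data), so no version of Kawamata--Viehweg applies; and realising $\mathbf{M}$ as a nef $\Q$-Cartier divisor on $\overline{X}$ buys nefness, not abundance. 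What Fujino's proof actually uses is torsion-freeness rather than vanishing: $R^1f_*\mathcal{O}_X(-S)$ is torsion-free by the Koll\'ar--Ambro--Fujino package since $-S\sim_\Q K_X+B+f^*D_Y$ with $(X,B)$ klt, while verticality of $S$ makes $R^1f_*\mathcal{O}_X(-S)\to R^1f_*\mathcal{O}_X$ an isomorphism over $Y\setminus f(S)$; hence the kernel of this map is a torsion subsheaf of a torsion-free sheaf and therefore zero, which forces the connecting map $f_*\mathcal{O}_S\to R^1f_*\mathcal{O}_X(-S)$ to vanish and $\mathcal{O}_Y\to f_*\mathcal{O}_S$ to be surjective. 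Even this corrected argument still has to contend with the nef part: the torsion-freeness you would need is for $L\sim_\Q K_{\overline{X}}+\Delta+\overline{M}+g^*D_Y$ with $\overline{M}$ nef but not a pullback from $Y$, which is not covered by the classical statements. That residual difficulty is presumably why the paper invokes \cite{birkar-connect} instead of adapting the cohomological proof, and your write-up does not address it.
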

\begin{proof}
	This is a special case of \cite[Theorem 3.1(2)]{birkar-connect}.
\end{proof}

\begin{lem}\label{lem-connected-fibre}
	Proposition \ref{prop-adm-gen} holds when $f|_S$ has connected fibres.
\end{lem}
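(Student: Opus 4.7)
The plan is to execute Fujino's vanishing-and-lifting strategy in the g-pair setting. The two key inputs are Kawamata--Viehweg vanishing applied to the g-klt pair $(X, (B-S)+M)$, and the structure of the contraction $f\colon X \to Y$ induced by the semi-ample divisor $K_X+B+M$, together with the hypothesis that $f|_S$ has connected fibres.

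First, I would rewrite
\[
m(K_X+B+M) - S \;=\; K_X + (B-S) + M + (m-1)(K_X+B+M),
\]
noting that $(X,(B-S)+M)$ is g-klt because $\overline{M}$ is very general and $B - S$ has coefficients in $[0,1)$, and $(m-1)(K_X+B+M)$ is semi-ample hence nef. Kawamata--Viehweg vanishing for g-klt pairs then gives $H^{i}(X, m(K_X+B+M) - S) = 0$ for $i>0$, so the restriction
\[
H^{0}(X, m(K_X+B+M))\;\twoheadrightarrow\; H^{0}(S, m(K_S+B_S+M_S))
\]
is surjective. Because any preimage $s$ of an admissible section $s_S \in \mathcal{A}(S, m(K_S+B_S+M_S))$ automatically satisfies $s|_S = s_S \in \mathcal{A}$, the lift $s$ lies in $\mathcal{PA}(X, m(K_X+B+M))$ by definition, yielding the desired surjectivity of $\mathcal{PA}\twoheadrightarrow \mathcal{A}$. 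This disposes of the first half of the proposition.

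For generation of $\mathcal{O}_X(m(K_X+B+M))$ by $\mathcal{PA}$ I would split into two cases. At a point $x \in S$, hypothesis (2) gives admissible sections of $\mathcal{O}_S(m(K_S+B_S+M_S))$ generating the stalk at $x$; lifting them as above and applying Nakayama's lemma (using that the ideal sheaf of $S$ lies in $\mathfrak{m}_{X,x}$ for $x \in S$) produces pre-admissible generators of $\mathcal{O}_X(m(K_X+B+M))_x$. At a point $x \in X \setminus S$, any section of $|m(K_X+B+M) - S|$ is pre-admissible (its restriction to $S$ is zero, which is trivially admissible), so the goal reduces to showing $\mathrm{Bs}\,|m(K_X+B+M)-S| \subseteq S$.

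The main obstacle is the last inclusion, and this is exactly where the connected-fibre hypothesis on $f|_S$ is used. Since $f_\ast\mathcal{O}_X = \mathcal{O}_Y$, since $f|_S$ has connected fibres so that $f_\ast \mathcal{O}_S = \mathcal{O}_{f(S)}$, and since $R^{1}f_\ast \mathcal{O}_X(-S) = 0$ by the relative form of the same Kawamata--Viehweg vanishing, pushforward of $0\to\mathcal{O}_X(-S)\to\mathcal{O}_X\to\mathcal{O}_S\to 0$ identifies $f_\ast\mathcal{O}_X(-S)$ with the ideal sheaf $\mathcal{I}_{f(S)}\subseteq\mathcal{O}_Y$. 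Writing $m(K_X+B+M) = f^{\ast}H$ for some ample $H$ on $Y$, the projection formula gives
\[
f_\ast \mathcal{O}_X\bigl(m(K_X+B+M)-S\bigr) \;=\; \mathcal{O}_Y(H)\otimes \mathcal{I}_{f(S)},
\]
which, for $m$ sufficiently divisible, is globally generated on $Y\setminus f(S)$ by Serre vanishing; pulling back yields pre-admissible sections that generate at every $x$ with $f(x) \notin f(S)$. For the residual points $x \notin S$ with $f(x) \in f(S)$, I would argue that the lifted admissible sections already generate at $x$: these lifts generate $\mathcal{O}_X(m(K_X+B+M))$ in a neighborhood of $S$ (by the Nakayama step above and semicontinuity of the minimal number of generators), and under the connected-fibre hypothesis this neighborhood can be arranged to contain all of $f^{-1}(f(S))$ after suitably enlarging $m$. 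Combining the two cases completes the proof.
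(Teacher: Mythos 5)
Your architecture (lift admissible sections from $S$, then handle points off $S$ via $|m(K_X+B+M)-S|$) fails at its first and most important step. You invoke Kawamata--Viehweg vanishing for the g-klt pair $(X,(B-S)+M)$ twisted by $(m-1)(K_X+B+M)$ to conclude $H^{1}(X,m(K_X+B+M)-S)=0$. That vanishing requires the twist to be nef and \emph{big}, whereas here $K_X+B+M$ is only semi-ample: in the relevant cases it is the pullback $f^{*}H$ of an ample divisor from a lower-dimensional $Y$, so it is not big and the vanishing is simply unavailable (Ambro--Fujino/Koll\'ar-type vanishing only identifies $H^{1}(X,m(K_X+B+M)-S)$ with $H^{0}(Y,R^{1}f_{*}\mathcal{O}_X(m(K_X+B+M)-S))$, which need not vanish). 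The same defect recurs in your relative claim $R^{1}f_{*}\mathcal{O}_X(-S)=0$: the divisor $-S-(K_X+(B-S)+M)\sim_{\Q}-(K_X+B+M)$ is numerically trivial over $Y$, not big over $Y$. Note that where the paper does establish surjectivity of a restriction map of this shape (Step 2 of the proof of Theorem \ref{thm-lc-flips-3}) it needs an injectivity theorem \emph{plus} the extra hypothesis that $K_X+B+M-\epsilon P$ is semi-ample with $\Supp P=\rddown{B}$; no such hypothesis is present in Proposition \ref{prop-adm-gen}, and the proposition only claims surjectivity onto the admissible subspace $\mathcal{A}(S,\cdot)$, not onto all of $H^{0}(S,\cdot)$.

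The paper's proof avoids cohomology on $X$ altogether, and this is exactly where the connected-fibre hypothesis is used. By hypothesis (2) the admissible system $\mathcal{A}(S,m(K_S+B_S+M_S))$ is base point free and defines a morphism $g$ from $S$; every curve in a fibre of $f|_S$ is $m(K_S+B_S+M_S)$-trivial, hence contracted by $g$, so the rigidity lemma (which needs the fibres of $f|_S$ to be connected) makes $g$ factor through $T:=f(S)$. Thus every admissible section on $S$ descends to $T$, extends to a section of $\mathcal{O}_Y(rmH)$ for $r\gg 0$ over the affine base (Serre vanishing for the ample $H$), and pulls back to a pre-admissible section on $X$ restricting to the given one; global generation of $\mathcal{O}_Y(rmH)\otimes\mathscr{I}_T$ then also gives generation of $\mathcal{O}_X(rm(K_X+B+M))$ away from $S$. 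If you wish to salvage your write-up, the vanishing step must be replaced by this descent-and-extension argument.
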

\begin{proof}
	Let $g:S \to Z$ be the morphism given by the linear system $\mathcal{A}(S,m(K_S+B_S+M_S))$, and write $T=f(S)$. For any curve $C$ in the fibre of $f|_S$, $C$ is also contracted by $g$. Hence there is an induced morphism $T \to Z$ by the rigidity lemma.
	
	If $T=Y$, then for any section $s \in \mathcal{A}(S,m(K_S+B_S+M_S))$, there is a section $t$ on $Y$, and hence a section $f^*t \in \mathcal{PA}(X,m(K_X+B+M))$. Moreover, since $t$ generates $\mathcal{O}_Y(mH)$ where $H$ is an ample divisor of $Y$, we deduce $f^*t$ generates $\mathcal{O}_X (m(K_X+B+M))$.
	
	If $T \subsetneq Y$, then we considerthe following commutative diagram,
	 $$
	\xymatrix{
		H^0(X, \mathcal{O}_X (rm(K_X+B+M))) \ar[r]^{}   &   H^0(S, \mathcal{O}_S (rm(K_S+B_S+M_S)))   &\\
		H^0(Y, \mathcal{O}_Y(rmH)) \ar[u]^{\mathrm{id}}  \ar[r]^{} &     H^0(T, \mathcal{O}_T(rmH|_T))  \ar[u]^{\mathrm{id}} }
	$$
	Note that the horizontal arrows are surjective. By taking a sufficiently large number $r$, $\mathcal{O}_Y(rmH) \otimes \mathscr{I}_T$ is generated by global sections, where $ \mathscr{I}_T$ is the defining ideal of $T$. Hence the lemma follows.
\end{proof}

\noindent \textbf{Case 2: $f|_S$ has a disconnected fibre.}

\begin{rem}[An involution of a double cover]
	Let $f:X \to Y$ be a generically finite morphism of degree $2$. Then, $X$ has a natural nontrivial birational involution over $Y$ as follows. Consider the base change, where $(X \times_Y X)^\nu_{\text{main}}$ denotes the main components of the normalisation of $X \times_Y X$.
	$$
	\xymatrix{
		(X \times_Y X)^\nu_{\text{main}} \ar[d]_{} \ar[r]^{}   &  X\ar[d]^{f}\  &\\
		X \ar[r]^{f} &    Y } 
	$$
	Note that $(X \times_Y X)^\nu_{\text{main}} =X_1 \coprod X_2$ has two irreducible and reduced components, with $X_1 \cong X$ by the diagonal map, and $X_2$ is birational to $X$. Theorefore, the composite map $\iota: X \dashrightarrow X_2 \to X$ is the required involution, which permutes the two points of the general fibre.
\end{rem}

\begin{lem}[\text{cf.\cite[Lemma 2.3]{fujino1}}]\label{lem-horizontal-B}
	Let $(X,B+M)$ be a projective $\Q$-factorial g-lc pair with general data $\overline{M}$, and $g:X \to T$ be a $(K_X+B-\epsilon S +M)$-extremal contraction of relative dimension $1$, with $K_X+B+M \equiv 0/T$, where $S =\rddown{B}$ and $\epsilon>0$ is sufficiently small. Suppose the horizontal part $S^h$ intersects with a general fibre at two points. Then, there exists a $\Q$-rational function $\psi$, such that $\overline{M}+(\psi)$ is general, and either of the followings holds:
	\begin{enumerate}
		\item $S^h=S_1$ is irreducible, and the induced g-lc pair $(S_1^{\nu},  B_{S_1^{\nu}} +(M_{S_1^{\nu}}+(\psi|_{S_1})))$ has a $B$-birational involution over $T$.
		
		\item $S^h=S_1+S_2$ has two irreducible components, the induced g-lc pairs $(S_1^\nu,B_{S_1^\nu}+(M_{S_1^\nu}+(\psi|_{S_1^\nu})))$ and $(S_2^\nu,B_{S_2^\nu}+(M_{S_2^\nu}+(\psi|_{S_2^\nu})))$ are $B$-birational over $T$. 
	\end{enumerate}
\end{lem}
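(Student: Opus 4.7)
The plan is to first use the extremality of $g$ to produce a $\Q$-linear triviality of $K_X+B+M$ over $T$, and then verify the $B$-birational condition for the induced map (respectively involution) on $S^h$ via generalized adjunction.

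For the choice of $\psi$: since $g$ is a $(K_X+B-\epsilon S+M)$-extremal contraction with $\rho(X/T)=1$ and $K_X+B+M\equiv 0/T$, the relative base point free theorem for NQC g-pairs yields $K_X+B+M\sim_\Q g^*L$ for some $\Q$-Cartier divisor $L$ on $T$. Hence there is a $\Q$-rational function $\psi_0$ with $K_X+B+M+(\psi_0)=g^*L$. Replacing $\psi_0$ by $\psi_0\cdot g^*\tau$ for a sufficiently general $\Q$-rational function $\tau$ on $T$ preserves this relative equivalence, and by passing to a higher log resolution of $\overline{X}$ we may arrange $\overline{M}+(\psi)$ to be a general datum in the sense of Definition \ref{defn-g-data}, in particular with support disjoint from the relevant strata $\overline{S}_i^\nu$ so that the generalized adjunction yields well-defined $M_{S_i^\nu}^\psi$.

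In Case 2 ($S^h=S_1+S_2$), each $S_i\to T$ is generically of degree one, hence birational; normalization gives birational morphisms $S_i^\nu\to T$, yielding a birational map $\varphi: S_1^\nu\dashrightarrow S_2^\nu$ over $T$. Choose a common log resolution $W$ with morphisms $\alpha: W\to\overline{S}_1^\nu$ and $\beta: W\to\overline{S}_2^\nu$ lying over a log resolution $\overline{T}\to T$. By generalized adjunction and the relation $K_X+B+M+(\psi)=g^*L$, both $\alpha^*(K_{S_1^\nu}+B_{S_1^\nu}+M_{S_1^\nu}^\psi)$ and $\beta^*(K_{S_2^\nu}+B_{S_2^\nu}+M_{S_2^\nu}^\psi)$ coincide with the pullback of $L$ to $W$. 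Uniqueness of the decomposition on a log resolution into canonical, boundary, and moduli parts then forces $\overline{B}_{S_1^\nu}|_W=\overline{B}_{S_2^\nu}|_W$ and $\overline{M}_{S_1^\nu}|_W=\overline{M}_{S_2^\nu}|_W$, which is precisely the $B$-birational condition. Case 1 ($S^h=S_1$ irreducible) is handled identically, using the natural involution $\iota: S_1^\nu\dashrightarrow S_1^\nu$ on the normalized double cover constructed as in the preceding remark, with $\alpha,\beta: W\to\overline{S}_1^\nu$ realizing $\iota=\beta\circ\alpha^{-1}$.

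The main obstacle I anticipate is the simultaneous choice of $\psi$ satisfying two a priori competing constraints — realizing $K_X+B+M+(\psi)=g^*L$ on the nose while keeping $\overline{M}+(\psi)$ general on some chosen log resolution. The freedom should come from twisting $\psi$ by $g^*\tau$ for suitable $\tau$ on $T$ (preserving the relative equivalence) together with further blow-ups of $\overline{X}$ to remove residual incidences of $\overline{M}+(\psi)$ with strata. A secondary point requiring care is that the uniqueness-of-decomposition argument used to extract $B$-birational equalities assumes that the b-divisor data descend to a common birational model, which must be ensured when constructing $W$ — but this is a standard resolution-of-singularities manoeuvre.
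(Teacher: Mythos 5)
There is a genuine gap at the step where you pass from
$\alpha^*(K_{S_1^\nu}+B_{S_1^\nu}+M_{S_1^\nu}^\psi)=\beta^*(K_{S_2^\nu}+B_{S_2^\nu}+M_{S_2^\nu}^\psi)$
to the separate equalities of boundary and moduli parts by invoking ``uniqueness of the decomposition on a log resolution into canonical, boundary, and moduli parts.'' No such uniqueness exists: for a g-(sub-)pair the moduli b-divisor $\mathbf{M}$ is independent data, not recoverable from the divisor class of $K+B+M$, and Definition \ref{defn-B-bir} requires $\overline{B}=\overline{B}'$ \emph{and} $\overline{M}=\overline{M}'$ separately on a common resolution. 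The quadratic transformation example after Definition \ref{defn-B-bir} shows exactly this failure mode: items (2) and (3) there have identical $K_X+B+M$ (hence identical total pullbacks) but different b-divisors $\mathbf{M}$, and one map is $B$-birational while the other is not. So the fact that both sides pull back to $g^*L$ on $W$ tells you nothing about whether the two restricted moduli b-divisors $\mathbf{M}|_{S_1^\nu}$ and $\mathbf{M}|_{S_2^\nu}$ agree on $W$, which is the actual content of the lemma.

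What is missing is the paper's key observation that the moduli part itself descends to $T$. On a general fibre $F\cong\PP^1$ one has $\deg K_F=-2$ and $\deg S^h|_F=2$, while $(B-S^h)|_F\ge 0$ and $\overline{M}|_F$ is nef; since the total degree is $0$, both are numerically trivial on $F$, so $\overline{M}\equiv 0$ over the generic point of $T$ and hence $\overline{M}\sim_\Q 0$ there. This is what the twist by $\psi$ is really for: it makes $\overline{M}'=\overline{M}+(\psi)$ vertical over $T$, hence nef and abundant over $T$, and Lemma \ref{lem-nef-abundant-divisor}(2) then produces a birational model $T'\to T$ with $\overline{M}'=g'^*M_{T'}$. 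Only then do the restrictions of the moduli data to $S_1$ and $S_2$ (or to the two sheets of the double cover in Case 1) become pullbacks of the \emph{same} divisor $M_{T'}$, giving $\alpha^*\overline{M}'|_{\overline{S}_1}=\beta^*\overline{M}'|_{\overline{S}_2}$; the boundary equality then follows by subtracting this from the total equality you already established. Your construction of $\psi$ via the contraction theorem and the twist by $g^*\tau$ does not address this, so as written the argument does not establish $B$-birationality.
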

\begin{proof}
	By construction we have $\overline{M} \equiv 0$ over the generic point of $T$. Since $\overline{M}$ is general and $m\overline{M}$ is Cartier, we may assume $\overline{M}' = \overline{M}+(\psi)$ is vertical$/T$. In particular $\overline{M}$ is nef and abundant$/T$. By Lemma \ref{lem-nef-abundant-divisor}(2), there exists a birational model $T' \to T$ such that $\overline{M}' = 0/T'$, hence $\overline{M}'= g'^*M_{T'}$, where $g': \overline{X}' \to T'$. Therefore, the datum $\overline{M}|_{S_i'}+(\psi|_{S_i'})$ is the pull-back of $M_{T'}$ for $i=1,2$, where $S_i$ is the birational transform of $S_i$. Note that $K_X+B=0/T$ and $M+(\psi)=0/T$. So the lemma is concluded. 
\end{proof}

\begin{rem}\label{rem-horizontal}
	From the proof above, by \cite[Lemma 3.22]{hu2}, there exists a g-lc structure $(T,B_T+M_T)$ with data $\mathbf{M}_T$ such that $K_X+B+M \sim_\Q g^*(K_T+B_T+M_T)$ with $\mathbf{M}\sim_\Q g^*\mathbf{M}_T$.
\end{rem}

\begin{prop}[\text{cf.\cite[Proposition 2.1]{fujino1}}]\label{prop-disconnected-fibre}
	Let $(X,B+M)$ be a projective $\Q$-factorial g-dlt pair with very general data, and $f : X \to Z$ be a contraction onto a normal variety $Y$ with $K_X+B+M\sim_\Q 0/Y$. Suppose that the restriction $f|_S:S \to Y$ has a disconnected fibre, where $S:=\rddown{B}$. Then, there exists a rational map $g: X \dashrightarrow T/Y$ with general fibre $\PP^1$. Moreover, the horizontal$/T$ part $S^h$ is also horizontal$/Y$, and there exists a $\Q$-rational function $\psi$, such that $\overline{M}+(\psi)$ is general, and one of the followings holds:
	\begin{enumerate}
		\item $S^h$ is irreducible with mapping degree $[S^h:T]=2$, and there is a $B$-birational involution on $(S^h,B_{S^h}+(M_{S^h}+(\psi|_{S^h})))$ over $T$;
		
		\item $S^h=S_1+S_2$ with $S_1,S_2$ irreducible and the induced map $(S_1,B_{S_1}+(M_{S_1}+(\psi|_{S_1}))) \bir (S_2,B_{S_2}+(M_{S_2}+(\psi|_{S_2})))$ is $B$-birational over $T$.
	\end{enumerate}
    Furthermore, for any log resolution $\pi:(\overline{X},\overline{B}+\overline{M}) \to X$ such that the induced map $\overline{g}:\overline{X} \to T$ is a morphism, writing $\overline{S}=\overline{B}^{=1}$, then
    \begin{enumerate}
    	\item The restriction $\overline{g}|_{\overline{S}^v}:\overline{S}^v \to W \subsetneq T$ has connected fibres, where $\overline{S}^v$ denotes the vertical part; 
    	
    	\item For any irreducible component $W_j$ of $W$, there exists a stratum $V_j \subset \overline{S}^h$ dominant over $W_j$. 
    \end{enumerate}
\end{prop}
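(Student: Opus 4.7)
The plan is to build the rational map $g:X\dashrightarrow T/Y$ via a relative MMP and then feed it into Lemma \ref{lem-horizontal-B}. First I observe that since $f$ has connected fibres but $f|_S$ does not, the horizontal part $S^h$ of $S$ over $Y$ is nonzero and a general fibre of $f$ meets $S$ in at least two connected components; in particular $-S$ is not pseudo-effective over $Y$. The pair $(X,B-\epsilon S+M)$ is $\Q$-factorial g-klt for $0<\epsilon\ll 1$ and
$$K_X+B-\epsilon S+M\sim_\Q -\epsilon S/Y.$$
Running the $(K_X+B-\epsilon S+M)$-MMP over $Y$ with scaling of an ample divisor, \cite{bchm} terminates in a Mori fibre space $g_0:X_0\to T_0/Y$. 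The MMP is $(K_X+B+M)$-trivial, so $K_{X_0}+B_0+M_0\sim_\Q 0/T_0$, and the generating extremal ray $R$ of $g_0$ satisfies $R\cdot S_0>0$, so $S_0^h$ is horizontal over $T_0$.

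Next I refine $g_0$ so that its relative dimension is one with $\PP^1$ fibres meeting $S^h$ in two points. If $g_0$ already has relative dimension one, a general fibre $F$ satisfies $(K_F+B_F+M_F)\sim_\Q 0$ with $S^h|_F\neq 0$, forcing $-K_F\sim_\Q B_F+M_F$ of positive degree, hence $F\cong\PP^1$ and $\deg(S^h|_F)=2$ (the total boundary degree equals $\deg(-K_F)=2$). If $\dim F\geq 2$, then $F$ is g-klt Fano with $S_0|_F$ horizontal and nontrivial, and I would run a further fibrewise MMP, which globalises to a Mori fibre space $X_0\dashrightarrow X_1/T_0$ of smaller relative dimension; iterating at most $\dim X-\dim Y-1$ times yields the desired $g:X\dashrightarrow T/Y$, and horizontality of $S^h/T$ forces horizontality of $S^h/Y$. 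Applying Lemma \ref{lem-horizontal-B} to $g$ (after passing to a suitable birational model on which it is a morphism) produces the $\Q$-rational function $\psi$ with $\overline{M}+(\psi)$ general, and the dichotomy of Cases (1) and (2).

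For the \textbf{furthermore} clauses, fix a log resolution $\pi:(\overline{X},\overline{B}+\overline{M})\to X$ with $\overline{g}:\overline{X}\to T$ a morphism and $\overline{S}=\overline{B}^{=1}$. Connectedness of fibres of $\overline{g}|_{\overline{S}^v}:\overline{S}^v\to W$ follows by applying Lemma \ref{lem-contraction} to the g-klt pair $(\overline{X},\overline{B}-\epsilon\overline{S}^h+\overline{M})$ over $T$ after rewriting $K_{\overline{X}}+(\overline{B}-\overline{S}^v)+\overline{S}^v+\overline{M}\sim_\Q 0/T$ using the Calabi--Yau structure, with $\overline{S}^v$ vertical over $T$ by definition. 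For the existence of a stratum $V_j\subset\overline{S}^h$ dominating each irreducible component $W_j$ of $W$, I would argue by contradiction: if no horizontal stratum dominated $W_j$, then over a general point of $W_j$ the connected $\PP^1$-fibre of $\overline{g}$ would meet $\overline{S}$ only in $\overline{S}^v$, contradicting the structural description (degree two onto $T$, or two horizontal components) obtained from Lemma \ref{lem-horizontal-B}.

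The main obstacle is the reduction to relative dimension one: the Mori fibre space produced by the $Y$-relative MMP can a priori have arbitrary relative dimension, and extracting a $\PP^1$-fibration compatible with the disconnectedness of $f|_S$-fibres requires an iterative use of fibrewise MMPs and a careful check that the resulting global rational map continues to satisfy $S^h$ horizontal of degree two over the base, so that Lemma \ref{lem-horizontal-B} is genuinely applicable.
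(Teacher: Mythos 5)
Your opening moves coincide with the paper's: since $f|_S$ has a disconnected fibre, Lemma \ref{lem-contraction} forces $S$ to have a nonzero horizontal$/Y$ part, so $K_X+B+M-\epsilon S$ is not pseudo-effective$/Y$ and a $(K_X+B+M-\epsilon S)$-MMP$/Y$ terminates with a Mori fibre space $g':X'\to T$ by \cite{bchm}. The problem is the step you yourself label ``the main obstacle'' and leave unresolved: getting from an arbitrary Mori fibre space down to a $\PP^1$-fibration on which $S^h$ has degree exactly two. The paper does not iterate fibrewise MMPs; it invokes Koll\'ar's $\PP^1$-linking result \cite[Proposition 4.37]{kollar-mmp}, which, from the disconnectedness of $f^{-1}(y)\cap S$, yields in one stroke that the number of connected components is exactly $2$, that $(X,B+M)$ is plt near the fibre, that the general fibre of $g'$ is $\PP^1$, that $S^h=\widetilde{S}^h$ (horizontal over $T$ if and only if over $Y$), and that $S^h\dashrightarrow T$ is generically finite of degree $2$; this is exactly what makes Lemma \ref{lem-horizontal-B} applicable and produces the dichotomy (1)/(2). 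Your proposed substitute does not close the gap: a fibrewise MMP of a general fibre need not be induced by a relative MMP over $T_0$, and nothing in the iteration guarantees that the two-component structure forced by the disconnected fibre of $f|_S$ persists to the final fibration.

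There is also a quantitative slip in your relative-dimension-one case: from $\deg(B_F+M_F)=\deg(-K_F)=2$ you conclude $\deg(S^h|_F)=2$, but $S^h$ is only the reduced part of $B$, and the fractional part of $B_F$ together with $M_F$ also contributes to the degree, so this count gives only $\deg(S^h|_F)\le 2$. The equality --- equivalently, that a general $\PP^1$-fibre meets $\rddown{B}$ in two points, which is the content of cases (1) and (2) --- again comes from \cite[Proposition 4.37]{kollar-mmp} and not from the adjunction degree alone. For the ``furthermore'' clauses the paper cites Lemmas \ref{lem-horizontal-B} and \ref{lem-slc-contraction}; your arguments there are in the right spirit, but applying Lemma \ref{lem-contraction} to $(\overline{X},\overline{B}-\epsilon\overline{S}^h+\overline{M})$ is not literal, since on a log resolution $\overline{B}$ is only a sub-boundary and that lemma is stated for g-klt pairs with effective boundary.
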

\begin{proof}
	By Lemma \ref{lem-contraction}, the horizontal$/Y$ part $\widetilde{S}^h \neq 0$, hence $K_X+B+M -\epsilon S$ is not pseudo-effective$/Y$ for an arbitrary small rational number $\epsilon>0$. Replacing $M$ with a general effective member, we run a $(K_X+B+M -\epsilon S)$-MMP$/Y$ which terminates with a Mori fibre space $g': X' \to T$ by \cite{bchm}. 
	%Note that $(X',B'+M')$ is $\Q$-factorial lc and $(X',B'+M'-\epsilon S')$ is klt, where $B',M',S'$ are birational transforms. 
	By \cite[Proposition 4.37]{kollar-mmp}, if $f^{-1}(y) \bigcap S$ is disconnected, then the number of connected components of $f^{-1}(y) \bigcap S$ is $2$, and $(X, B + M)$ is plt over a neighborhood of $y$. Therefore, either the horizontal$/Y$ part $\widetilde{S}^h$ is irreducible or $\widetilde{S}^h=S_1+S_2$ with $S_i \to Y$ contractions. Moreover, by the argument of \cite[Proposition 4.37]{kollar-mmp}, the general fibre of $g'$ is $\PP^1$, $S^h=\widetilde{S}^h$, and $S^h \dashrightarrow T$ is generically finite of degree $2$. %Since $g'$ is extremal and $X'$ is $\Q$-factorial klt, $T$ is also $\Q$-factorial klt.  
	The rest follows from Lemma \ref{lem-horizontal-B} and Lemma \ref{lem-slc-contraction}.
\end{proof}

\begin{rem}
	Notation as above, we remark the followings.
	\begin{enumerate}
		\item Since $g'$ is an extremal contraction, we deduce $T$ is a $\Q$-factorial klt variety, and it has a natural g-lc structure as in Remark \ref{rem-horizontal}.
		
		\item If further $\kappa_{\sigma}(K_X+B+M) =0$ and $S=S_1+S_2$, then the strict transform $S_i' \cong T$ for $i=1,2$ (see \cite[Proof of Proposition 2.1]{fujino1}). 
	\end{enumerate}	
\end{rem}

Recall that, given a line bundle $\mathcal{L}$ with a fixed associated Cartier divisor $s_L$, a \emph{meromorphic section} of $\mathcal{L}$ is $\psi\cdot s_L$ for some rational function $\psi \in k(X)$.
\begin{lem}[Descent of sections]\label{lem-descent}
	Let $f:X \to Z,g:Y \to Z$ be generically finite morphism of proper normal varieties and $L$ be a line bundle on $Z$. Given a base change, where $(X \times_Z Y)^\nu_{\text{main}}$ denotes the main components of the normalisation of $X \times_Z Y$,
	$$
	\xymatrix{
		(X \times_Z Y)^\nu_{\text{main}} \ar[d]_{\widetilde{f}} \ar[r]^{~~\widetilde{g}}   &  X\ar[d]^{f}\  &\\
		Y \ar[r]^{g} &    Z } 
	$$
	and meromorphic sections $s$ of $\mathcal{O}_X(f^*L)$ and $t$ of $\mathcal{O}_Y(g^*L)$ with $\widetilde{g}^*s=\widetilde{f}^*t$. Then, for a positive integer $m$ such that $\deg f |m$ and $\deg g|m$, there exists a meromorphic section $\sigma$ of $\mathcal{O}_Z(mL)$ such that $f^*\sigma =s^{\otimes m}$ and $g^*\sigma =t^{\otimes m}$.
\end{lem}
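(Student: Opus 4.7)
The plan is to translate everything into a question about rational functions and then apply Galois descent. First, I would fix a meromorphic section $s_0$ of $L$ on $Z$ and write $s = \varphi \cdot f^*s_0$, $t = \psi \cdot g^*s_0$ with $\varphi \in k(X)^{\times}$, $\psi \in k(Y)^{\times}$. The hypothesis $\widetilde{g}^*s = \widetilde{f}^*t$ then amounts to the identity of rational functions $\widetilde{g}^*\varphi = \widetilde{f}^*\psi$ on each main component of $(X \times_Z Y)^{\nu}$. Provided one can establish that $\varphi$ and $\psi$ are equal and already lie in $k(Z)$ under the inclusions $k(Z) \hookrightarrow k(X), k(Y)$, the candidate $\sigma := \varphi^m \cdot s_0^{\otimes m}$ is a well-defined meromorphic section of $\mathcal{O}_Z(mL)$ and the required identities $f^*\sigma = s^{\otimes m}$, $g^*\sigma = t^{\otimes m}$ follow by unwinding the definitions.

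To achieve the descent, I would pass to a finite Galois extension $\widetilde{L}/K$ of $K := k(Z)$ containing both $k(X)$ and $k(Y)$. Writing $G = \mathrm{Gal}(\widetilde{L}/K)$ and $H_1, H_2 \subset G$ for the subgroups fixing $k(X), k(Y)$, the irreducible components of $\mathrm{Spec}(k(X) \otimes_K k(Y))$ are in bijection with the double cosets $H_1 \backslash G / H_2$; on the component labelled by a representative $g$, the inclusion of $k(X)$ into the residue field is the standard one while the inclusion of $k(Y)$ is the $g$-twisted one. Choosing representatives $1 = g_0, g_1, \ldots, g_r$ for the double cosets, the hypothesis becomes the system of equalities $\varphi = g_i(\psi)$ in $\widetilde{L}$ for $i = 0, 1, \ldots, r$.

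The group-theoretic kernel is then to deduce $\varphi = \psi \in K$ from these relations. Indeed $g_0 = 1$ gives $\varphi = \psi$, and substitution yields $\psi = g_i(\psi)$ for every $i$; together with $\psi \in \widetilde{L}^{H_2}$ and $\psi = \varphi \in \widetilde{L}^{H_1}$, this shows $\psi$ is fixed by the subgroup $N := \langle H_1, H_2, g_1, \ldots, g_r \rangle$. The double coset decomposition $G = \bigsqcup_i H_1 g_i H_2$ writes every $g' \in G$ as $h_1 g_i h_2$ with $h_1 \in H_1, h_2 \in H_2$, so $N = G$, and hence $\psi \in \widetilde{L}^G = K$. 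The main obstacle I expect is the careful book-keeping in the non-Galois setting and the verification that the main components of $(X \times_Z Y)^{\nu}$ do account for all double cosets; the divisibility hypothesis on $m$ is merely a convenience for the applications and the conclusion in fact holds already with $m = 1$.
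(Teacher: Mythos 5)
Your argument is correct, and it reaches the conclusion by a genuinely different route from the paper. The paper works at the level of divisors and intersection theory: it pushes $(s)$ and $(t)$ forward to $Z$ via \cite[Proposition 1.4, Example 1.7.4]{fulton}, uses the divisibility $\deg f\mid m$, $\deg g\mid m$ to realise $\tfrac{m}{\deg f}f_*((s))$ and $\tfrac{m}{\deg g}g_*((t))$ as divisors of meromorphic sections $\sigma_1,\sigma_2$ of $mL$, and then identifies the discrepancies $s^{\otimes m}/f^*\sigma_1$ and $t^{\otimes m}/g^*\sigma_2$ as constants because their norms to $Z$ are nowhere vanishing regular functions on a proper variety. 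You instead trivialise against a fixed rational section $s_0$ of $L$ and reduce to showing that $\varphi=s/f^*s_0$ already lies in $k(Z)$; the double-coset description of the components of $k(X)\otimes_{k(Z)}k(Y)$ turns the hypothesis into the statement that $\varphi=\psi$ is fixed by $H_1$, $H_2$ and a full set of double-coset representatives, hence by all of $G$. Your route is more elementary, avoids the norm formalism entirely, and proves the sharper statement with $m=1$ — the divisibility hypothesis is indeed an artefact of the paper's pushforward construction rather than of the lemma. Two points deserve explicit mention when you write this up: first, in this paper a variety may be disconnected, and in the application to Lemma \ref{lem-disconnected-fibre} the source $S^h$ can have two components, so $k(X)$ is only a product of fields; the reduction to the irreducible case is routine (run your argument for each pair of components and use injectivity of $g^*$ on meromorphic sections to see the resulting $\sigma$'s coincide), but it must be done. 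Second, the identification of the main components with all the double cosets should be justified: the main components are precisely those dominating $Z$, their generic points constitute the finite $k(Z)$-scheme $\Spec\bigl(k(X)\otimes_{k(Z)}k(Y)\bigr)$, and in characteristic zero this is reduced, hence a product of fields indexed by $H_1\backslash G/H_2$ with exactly the twisted embeddings you describe.
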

\begin{proof}
	Since $(s)\sim 0/Z$ and $(t) \sim 0/Z$, by \cite[Proposition 1.4, Example 1.7.4]{fulton}, there exist meromorphic sections $\sigma_1,\sigma_2 $ of $\mathcal{O}_Z(mL)$ such that $f_*(\frac{m}{\deg f}(s)) =  (\sigma_1)$, $g_*(\frac{m}{\deg g}(t) )=  (\sigma_2)$. Let $\varphi \in k(X)$ and $\psi \in k(Y)$ so that $(\varphi)=m(s) -f^*(\sigma_1)$ and $(\psi)= m(t) - g^*(\sigma_2)$. By \cite[Example 1.7.4]{fulton} again, we deduce that $N_{X/Z}(\varphi), N_{Y/Z}(\psi)$ are nowhere zero regular functions, hence constant functions. Moreover, since $\widetilde{g}^*s=\widetilde{f}^*t$, we deduce $(\sigma_1)=(\sigma_2)$, which in turn implies that $\widetilde{g}^*(\varphi)=\widetilde{f}^*(\psi)$, hence $(\varphi)=(\psi )=0$. Multiplying $\sigma_i$ with a suitable constant function, we obtain the required section. 
\end{proof}

\begin{lem}\label{lem-disconnected-fibre}
	Proposition \ref{prop-adm-gen} holds when $f|_S$ has a disconnected fibre.
\end{lem}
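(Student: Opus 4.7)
The plan is to use Proposition \ref{prop-disconnected-fibre} to factor through a $\PP^1$-fibration $g\colon X \dashrightarrow T/Y$, descend admissible sections from $S^h$ to $T$ via the natural double-cover structure, and then pull them back to $X$. For the global generation part, I combine these horizontal sections with sections produced by Case 1 applied to the contraction $T\to Y$.

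First, I would invoke Proposition \ref{prop-disconnected-fibre} to produce $g\colon X\dashrightarrow T/Y$ with general fibre $\PP^1$, the factorisation structure on $S^h$ (either irreducible with a $B$-birational involution, or $S^h=S_1+S_2$ with a $B$-birational identification $S_1\bir S_2$), and the compatibility properties (1)--(2) stated there. After replacing $X$ by a suitable log resolution we may assume $g$ is a morphism. Since $f$ and $g$ are both contractions, Stein factorisation of $f=h\circ g$ implies that $h\colon T\to Y$ is a contraction, which places Case 1 (Lemma \ref{lem-connected-fibre}) at our disposal on $T$.

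Second, for any $s\in\mathcal{A}(S,m(K_S+B_S+M_S))$, admissibility forces $s|_{S^h}$ to be invariant under the involution (case (1)) or the $B$-birational swap (case (2)) from Proposition \ref{prop-disconnected-fibre}. After possibly replacing $m$ by a bounded multiple and applying Lemma \ref{lem-descent} to the generically degree-$2$ morphism $S^h\to T$, the section $s|_{S^h}$ descends to a meromorphic section $\tau$ of $\mathcal{O}_T(m(K_T+B_T+M_T))$, where $T$ carries the g-lc structure of Remark \ref{rem-horizontal} with $K_X+B+M\sim_\Q g^*(K_T+B_T+M_T)$. Setting $\sigma:=g^*\tau\in H^0(X,m(K_X+B+M))$, we have $\sigma|_{S^h}=s|_{S^h}$ by construction. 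To upgrade this to $\sigma|_S=s|_S$, I use the end of Proposition \ref{prop-disconnected-fibre}: part (1) says $\overline{S}^v\to W$ has connected fibres, and part (2) provides, for each component $W_j\subseteq W$, a horizontal stratum $V_j\subset\overline{S}^h$ dominant over $W_j$. Since strata of $\overline{S}^v$ lying over $W_j$ are linked to $V_j$ by $B$-birational maps coming from the ambient g-dlt geometry, admissibility of $s$ propagates the equality from $V_j$ to the strata of $\overline{S}^v$ above $W_j$, forcing $\sigma|_{S^v}=s|_{S^v}$. Thus $\sigma\in\mathcal{PA}(X,m(K_X+B+M))$ restricts to $s$, giving the required surjection.

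Finally, for global generation, it suffices to produce enough pre-admissible sections to generate $\mathcal{O}_X(m(K_X+B+M))$ at every point. The sections $g^*\tau$ obtained above cover those locations where $s|_{S^h}$ is nonzero. For the remaining locations, I apply Lemma \ref{lem-connected-fibre} (the connected-fibre case) to $h\colon T\to Y$: the admissible sections on the boundary of the induced g-pair on $T$ generate globally, and their $g$-pull-backs sit inside $\mathcal{PA}(X,m(K_X+B+M))$. Combining these two families with the semi-ampleness of $K_X+B+M$ (which, as in Lemma \ref{lem-connected-fibre}, lets us tensor by arbitrary sections of $\mathcal{O}_Y(rmH)$ vanishing on the relevant image locus) yields a globally generating family.

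The main obstacle I expect is the verification $\sigma|_{S^v}=s|_{S^v}$: it requires carefully combining the stratum-matching of Proposition \ref{prop-disconnected-fibre}(2) with the $B$-birational compatibilities enforced by admissibility across \emph{all} strata of $S$, since $\overline{S}^v$ typically contains many strata not directly governed by the $\PP^1$-fibration's involution, and one must trace compatibilities through the ambient g-dlt structure of $(X,B+M)$ rather than purely through the geometry of $g|_{S^h}$.
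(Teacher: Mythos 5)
Your proposal follows essentially the same route as the paper: use Proposition \ref{prop-disconnected-fibre} to get the $\PP^1$-fibration $g\colon X\dashrightarrow T/Y$, descend $s|_{S^h}$ to a meromorphic section on $T$ via Lemma \ref{lem-descent} and pull it back along $g$, then verify agreement on the vertical part via the connected fibres of $\overline{g}|_{\overline{S}^v}\colon\overline{S}^v\to W$ and the horizontal strata $V_j$ dominant over each component $W_j$ (the paper packages this last step as an injectivity statement in a commutative diagram of function fields, which is the precise form of the ``propagation'' you describe). The step you flag as the main obstacle is indeed the delicate point, and your treatment of it, together with the global-generation bookkeeping, matches the paper's argument in substance.
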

\begin{proof}
    There exists a rational map $g: X \dashrightarrow T/Y$ satisfying the conditions listed in Proposition \ref{prop-disconnected-fibre}. For any section $s \in \mathcal{A}(S,m(K_S+B_S+M_S))$, by Lemma \ref{lem-descent}, its restriction $(\psi^{m}\cdot s)|_{S^h}$ descends to a meromorphic section $\sigma$ of $\mathcal{O}_T(L)$. Now let $t=\psi^{-m}\cdot g^*\sigma$. It is obvious that $t|_{S^h}=s|_{S^h}$. It remains to verify that $t|_{S^v}=s|_{S^v}$, which completes the proof.
	
	Let $\pi:(\overline{X},\overline{B}+\overline{M}) \to X$ be a log resolution so that the induced map $\overline{g}:\overline{X} \to T$ is a morphism, and $\overline{t}=\pi^*t$.  By construction $\overline{t}=\overline{g}^* \sigma$. Since $\pi|_{\overline{S}}:\overline{S} \to S$ has connected fibres, where $\overline{S}=\overline{B}^{=1}$, it is sufficient to prove $\overline{s}|_{\overline{S}^v}=\overline{t}|_{\overline{S}^v}$, where $\overline{s}=\pi|_{\overline{S}}^*s$ and $\overline{S}^v$ denotes the vertical$/T$ part. To this end, by Proposition \ref{prop-disconnected-fibre}, $\overline{g}|_{\overline{S}^v}:\overline{S}^v \to W$ has connected fibres. Let $W_j \subset W$ be an irreducible component, and $S_j$ be its inverse image. By Proposition \ref{prop-disconnected-fibre}, there exists a stratum $V_j \subset \overline{S}^h$ dominant over $W_j$. Consider the following diagram
	 $$
	\xymatrix{
		H^0(S_j, m(K_X+B+M)|_{S_j}) \ar[d]^{p}\ar[r]^{}   &   H^0(V_j,  (m(K_X+B+M)|_{V_j})   &\\
		\mathcal{M}   \ar[r]^{\sim} &    \mathcal{M}  \ar[u]^{q} }
	$$
	where we regard $H^0(S_j, m(K_X+B+M)|_{S_j})$ as a sub-space of $k(S_j)$, and for any rational function $s \in H^0(S_i, m(K_X+B+M)|_{S_i})$, we define $p(s)=s \cdot \psi|_{S_j}^{-m} \in k(W_j)$. Moreover, we define $\mathcal{M}$ to be the image of $p$ in $k(W_j)$, and define $q(\sigma)=\overline{g}|_{V_j}^*\sigma \cdot \psi|_{V_j}^m$. It is easy to verify that the above diagram commutes and the right vertical arrow is injective. Therefore, the upper horizontal arrow in injective and the conclusion follows by $\overline{s}|_{V_i}=\overline{t}|_{V_i}$ for every $i$.
\end{proof}

\begin{proof}[Proof of Proposition \ref{prop-adm-gen}]
	Combing Lemma \ref{lem-connected-fibre} and Lemma \ref{lem-disconnected-fibre}.
\end{proof}

Now we treat the second step of the argument for Theorem \ref{thm-adm-gen}. %We say a normal variety is\emph{ pure dimensional} if every irreducible component dominates the same subset of $\Spec A$ and has the same dimension.

\begin{lem}[\text{cf.\cite[Lemma 4.9]{fujino1}}]\label{lem-A-1}
	Let $(X,B+M)$ be a projective g-dlt pair with very general data and with $K_X +B+M$ semi-ample,
	and let $m$ be a sufficiently large and divisible integer. We write $G = \rho_m(\Bir(X,B,M))$.
	If $s\in\mathcal{PA}(X,m(K_X+B+M))$ and $S= \rddown{B}$, then $(\varphi^*s)|_S =s|_S$ and $\varphi^*s \in \mathcal{PA}(X,m(K_X+B+M))$ for every $\varphi\in G$.
	In particular, since $|G|$ is finite according to Theorem \ref{thmmain-good-lc}, we have
	$$
	\sum_{\varphi \in G} \varphi^*s \in \mathcal{A}(X,m(K_X+B+M)), 
	$$
	$$
    \prod_{\varphi \in G} \varphi^*s \in \mathcal{A}(X,m|G|(K_X+B+M)), 
	$$
	and
	$$
	(\prod_{\varphi \in G} \varphi^*s)|_S  = (s|_S)^{|G|}.
	$$
\end{lem}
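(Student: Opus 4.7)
The plan is to first verify the two pointwise claims $(\varphi^*s)|_S = s|_S$ and $\varphi^*s \in \mathcal{PA}(X,m(K_X+B+M))$, and then deduce admissibility of the symmetrised sum and product as a formal consequence of the finiteness of $G$.

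For the key identity $(\varphi^*s)|_S = s|_S$, I would combine Lemma \ref{lem-res} with the admissibility of $s|_S$. Any $\varphi \in \Bir(X,B,M)$ induces a permutation $\sigma$ of the irreducible components $S_i$ of $S = \rddown{B}$ together with $B$-birational maps $\varphi|_{S_i}: S_i \dashrightarrow S_{\sigma(i)}$ between the corresponding g-dlt strata obtained from generalised adjunction (using Lemma \ref{lem-g-dlt-2} for normality). Lemma \ref{lem-res} ensures that the restriction $H^0(X, m(K_X+B+M)) \to H^0(S, m(K_S+B_S+M_S))$ intertwines $\varphi^*$ with $(\varphi|_S)^*$, so $(\varphi^*s)|_{S_i} = (\varphi|_{S_i})^*(s|_{S_{\sigma(i)}})$. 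Since $s$ is pre-admissible, $s|_S$ lies in $\mathcal{A}(S,m(K_S+B_S+M_S))$, and the admissibility condition applied to the $B$-birational map $\varphi|_{S_i}$ yields $(\varphi|_{S_i})^*(s|_{S_{\sigma(i)}}) = s|_{S_i}$ for every $i$. Reassembling over the components gives $(\varphi^*s)|_S = s|_S$, and pre-admissibility of $\varphi^*s$ follows at once because its restriction to $S$ coincides with the admissible section $s|_S$.

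For the symmetrised sections, the finiteness $|G|<\infty$ from Theorem \ref{thmmain-good-lc} makes both $\Sigma := \sum_{\varphi \in G} \varphi^*s$ and $\Pi := \prod_{\varphi \in G} \varphi^*s$ well-defined. Both are $G$-invariant by the usual cancellation, since for any $\eta \in G$ we have $\eta^*\Sigma = \sum_{\varphi}(\varphi\eta)^*s = \Sigma$, and likewise $\eta^*\Pi = \Pi$. Their restrictions to $S$ are, by the first part, $\Sigma|_S = |G|\cdot s|_S$ and $\Pi|_S = (s|_S)^{|G|}$, and both are admissible on $S$ because admissibility is preserved under sums and products of sections. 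Hence $\Sigma$ and $\Pi$ are pre-admissible on $X$; combined with their $G$-invariance this yields admissibility, so $\Sigma \in \mathcal{A}(X,m(K_X+B+M))$ and $\Pi \in \mathcal{A}(X,m|G|(K_X+B+M))$, with $\Pi|_S = (s|_S)^{|G|}$.

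The main obstacle is really the first identity: it is not deep, but one must be careful that $\varphi$ may permute components of $S$ rather than preserve each individually, and correctly match up $\varphi|_{S_i}:S_i\dashrightarrow S_{\sigma(i)}$ with the admissibility condition, which is designed to govern precisely such $B$-birational maps between possibly distinct components. Once Lemma \ref{lem-res} is invoked for compatibility of the restriction map with the $B$-representation and Definition \ref{defn-adim-sec} for the invariance, everything else is formal bookkeeping combined with the finiteness theorem.
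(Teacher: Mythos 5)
Your overall architecture (first prove $(\varphi^*s)|_S=s|_S$ and pre-admissibility of $\varphi^*s$, then deduce the statements about $\sum$ and $\prod$ formally from $|G|<\infty$) matches the paper, and your treatment of the second half is fine: $G$-invariance plus pre-admissibility is exactly admissibility, and the restriction formulas for the sum and product are immediate. The gap is in the first, and only non-trivial, step. You assert that $\varphi\in\Bir(X,B,M)$ induces a permutation $\sigma$ of the components $S_i$ of $\rddown{B}$ together with $B$-birational maps $\varphi|_{S_i}\colon S_i\dashrightarrow S_{\sigma(i)}$. This is false in general: writing $\varphi=\beta\circ\alpha^{-1}$ for a common log resolution $\alpha,\beta\colon\overline{X}\to X$, the strict transform $\overline{S}_i$ of $S_i$ under $\alpha$ may be \emph{contracted} by $\beta$ to a lower-dimensional stratum, so there is no induced birational map between boundary components at all. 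The quadratic transformation example in the paper (with $B=\Delta$ the triangle of lines, $K_X+\Delta\sim 0$ semi-ample) exhibits exactly this: each line of $\Delta$ is blown down by $\beta$ to a point. This failure is the entire reason Lemmas \ref{lem-stratum-1}, \ref{lem-stratum-2} and \ref{lem-induction} exist, and Lemma \ref{lem-res} alone does not give you the intertwining $(\varphi^*s)|_{S_i}=(\varphi|_{S_i})^*(s|_{S_{\sigma(i)}})$ in the form you use it.

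The correct argument (the paper's) replaces your component-level matching by a descent to deeper strata: by Lemma \ref{lem-induction} applied to the common resolution, for each component $S_i$ there are strata $V_i\subset S_i$ and $V_j$ such that $\varphi|_{V_i}\colon V_i\dashrightarrow V_j$ \emph{is} $B$-birational and the restriction $H^0(S_i,m(K_{S_i}+B_{S_i}+M_{S_i}))\to H^0(V_i,m(K_{V_i}+B_{V_i}+M_{V_i}))$ is an isomorphism. One then invokes not just the top-level matching condition in the admissibility of $s|_S$ but its full recursive content (pre-admissibility all the way down the strata), which gives $(\varphi|_{V_i})^*(s|_{V_j})=s|_{V_i}$; the injectivity of restriction to $V_i$ then forces $(\varphi^*s)|_{S_i}=s|_{S_i}$, equivalently $\alpha|_{\overline{S}}^*(s|_S)=\beta|_{\overline{S}}^*(s|_S)$. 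Your proof needs this detour; as written, the step on which everything else rests does not go through.
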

\begin{proof}
	We can assume $X$ is irreducible. Replacing $(X,B+M)$ we can assume it is log smooth g-sub-dlt. Let $\alpha,\beta: (\overline{X},\overline{B}+\overline{M}) \to X$ be a common log resolution of $(\varphi:X \dashrightarrow X) \in G$. For every component $S_i \subset S=B^{=1}$, by Remark \ref{rem-B-bir} and making minor changes to  Lemma \ref{lem-induction}, there exists a stratum $V_i \subset S_i$, and a stratum $V_j$ such that $\varphi|_{V_i}$ is $B$-birational with  $$H^0(S_i, m(K_{S_i} + B_{S_i}+M_{S_i})) \simeq H^0(V_i, m(K_{V_i} + B_{V_i}+M_{V_i})),$$
	which in turn implies that $\alpha|_{\overline{S}}^* (s|_{S}) = \beta|_{\overline{S}}^* (s|_{S})$, where $\overline{S}=\overline{B}^{=1}$, hence $(\varphi^*s)|_S =s|_S$. The rest is trivial.
\end{proof}

\begin{lem}[\text{\cite[Lemma 4.7]{fujino1}}]\label{lem-A-2}
	Notation and assumptions as in Proposition \ref{prop-adm-gen}, $\mathcal{A}(X,l(K_X+B+M))$ generates $\mathcal{O}_X (l(K_X+B+M))$ for $l$ sufficiently large and divisible.
\end{lem}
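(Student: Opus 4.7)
The plan is to convert pre-admissible sections into admissible ones via the product construction from Lemma \ref{lem-A-1}, choosing the pre-admissible input in sufficiently general position so that the product does not vanish at a prescribed point. Let $V := \mathcal{PA}(X,m(K_X+B+M))$ and $\mathcal{L} := \mathcal{O}_X(m(K_X+B+M))$. For $m$ sufficiently large and divisible, $V$ globally generates $\mathcal{L}$ by Proposition \ref{prop-adm-gen}. The group $G := \rho_m(\Bir(X,B,M))$ is finite by Theorem \ref{thm-good-lc} and, by Lemma \ref{lem-A-1}, acts $k$-linearly on $V$ and sends each $s \in V$ to $\prod_{\varphi \in G}\varphi^* s \in \mathcal{A}(X,m|G|(K_X+B+M))$.

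Fix $x \in X$ and consider the evaluation functionals $\ell_\varphi \colon V \to \mathcal{L}|_x$, $\ell_\varphi(s) := (\varphi^* s)(x)$, for $\varphi \in G$. The identity $\ell_\varphi = \ell_{\mathrm{id}} \circ \rho_m(\varphi)$, combined with surjectivity of $\ell_{\mathrm{id}}$ (which is exactly the generation statement of Proposition \ref{prop-adm-gen}) and invertibility of $\rho_m(\varphi) \in \mathrm{Aut}_k(V)$, shows that each $\ell_\varphi$ is surjective, so $\ker \ell_\varphi \subsetneq V$ for every $\varphi \in G$. Since $k$ has characteristic zero (hence is infinite) and $G$ is finite, $V$ is not the union of finitely many proper subspaces, so $V \setminus \bigcup_{\varphi \in G}\ker \ell_\varphi$ is non-empty; pick any $s$ in this complement. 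Then every factor $\varphi^* s$ is non-zero at $x$, so the product $\prod_{\varphi \in G}\varphi^* s \in \mathcal{A}(X,m|G|(K_X+B+M))$ is non-zero at $x$. As $x$ was arbitrary, $\mathcal{A}(X,l_0(K_X+B+M))$ globally generates $\mathcal{O}_X(l_0(K_X+B+M))$ with $l_0 := m|G|$.

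To extend from a single value $l_0$ to every sufficiently large and divisible $l$, one uses that admissibility is multiplicative: if $s \in \mathcal{A}(X,l(K_X+B+M))$ and $t \in \mathcal{A}(X,l'(K_X+B+M))$, then $\varphi^*(st) = \varphi^*(s)\cdot\varphi^*(t) = st$ for every $\varphi \in \Bir(X,B,M)$, so $st \in \mathcal{A}(X,(l+l')(K_X+B+M))$. Multiplying a fixed admissible generating family for $\mathcal{L}^{\otimes l_0}$ by a non-vanishing admissible section in the intermediate powers propagates global generation to every positive multiple of $l_0$, as required.

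The main obstacle, to my mind, is the surjectivity of the functionals $\ell_\varphi$ at an arbitrary $x \in X$; this is where the non-trivial content of Proposition \ref{prop-adm-gen} is used together with the fact from Lemma \ref{lem-A-1} that $\rho_m(\varphi)$ is a genuine linear automorphism of $V$ (in particular, $V$ is $G$-stable, so the evaluation computation makes sense intrinsically without having to track indeterminacies of $\varphi$). Once these two ingredients are in hand, the rest is elementary linear algebra over an infinite field plus the multiplicative stability of admissibility.
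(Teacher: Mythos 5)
Your argument is correct, and it isolates exactly the two inputs the paper also relies on (Proposition \ref{prop-adm-gen} for generation by $\mathcal{PA}$, and Lemma \ref{lem-A-1} for $G$-stability of $\mathcal{PA}$ and admissibility of the symmetrized section), but the mechanism for producing a non-vanishing admissible section at a given point is different. The paper forms \emph{all} elementary symmetric polynomials $\sigma_i(\varphi_j^*s)$, $i=1,\dots,n=|G|$, raises them to the powers $n!/i$ to land in the common degree $n!m$, and uses the set-theoretic identity $\bigcap_j\{\varphi_j^*s=0\}=\bigcap_i\{\sigma_i(\varphi_j^*s)=0\}\subset\{s=0\}$: any $s\in\mathcal{PA}$ with $s(x)\neq 0$ then yields some admissible $\sigma_i^{n!/i}$ not vanishing at $x$. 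You instead keep only the top symmetric function $\prod_{\varphi\in G}\varphi^*s$ and arrange, by a prime-avoidance argument over the infinite base field (a finite-dimensional space over an infinite field is not a finite union of proper subspaces, and each $\ker\ell_\varphi=\ker(\ell_{\mathrm{id}}\circ\rho_m(\varphi))$ is a hyperplane by surjectivity of the evaluation map and invertibility of $\rho_m(\varphi)$ on $\mathcal{PA}$), that every factor is already non-zero at $x$. What your route buys is a slightly better degree ($m|G|$ instead of $n!\,m$) at the cost of choosing $s$ in general position for each $x$; what the paper's route buys is that it works with an arbitrary $s$ non-vanishing at $x$ and needs no hypothesis on the base field. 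Your closing remark on multiplicativity of admissibility (needed to pass from one value $l_0$ to all sufficiently large and divisible $l$) is also correct and is implicit in the paper, where the conclusion is simply stated for $l=n!m$ with $m$ ranging over the sufficiently large and divisible integers of Proposition \ref{prop-adm-gen}.
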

\begin{proof}
	Let $\sigma_i$ be the $i$-th elementary symmetric polynomial and $n=|G|$. We obtain
	$$
	\{s=0 \} \supset \bigcap_{j=1}^{n}\{ \varphi_j^*s =0 \} = \bigcap_{i=1}^{n} \{ \sigma_i(\varphi_j^*s) \}.
	$$
	If $s \in \mathcal{PA}(X,m(K_X +B+M))$, then by Lemma \ref{lem-A-1}, we have
	$$
	\sigma_i^{n!/i}(\varphi_j^*s) \in \mathcal{A}(X,n!m(K_X+B+M)).
	$$
	Since $\mathcal{PA}(X,m(K_X +B+M))$ generates $\mathcal{O}_X (m(K_X+B+M))$ by Proposition \ref{prop-adm-gen}.
	Thus we can prove that $\mathcal{A}(X,n!m(K_X+B+M))$ generates $\mathcal{O}_X (n!m(K_X+B+M))$.
\end{proof}

\begin{proof}[Proof of Theorem \ref{thm-adm-gen}]
	By induction on the dimension of $X$, we can assume Theorem \ref{thm-adm-gen} holds for dimension $\le n-1$. By Lemma \ref{lem-descend}(1) we can replace $X$ with its normalisation $X^\nu$, hence assume $(X,B+M)$ is g-dlt. Finally we achieve the conclusion by Proposition \ref{prop-adm-gen} and Lemma \ref{lem-A-2}.  
\end{proof}

\subsection{Semi-ampleness for g-slc pairs}\label{sec4-2}

The main purpose of this subsection is to prove Theorem \ref{thmmain-semiample-slc}. We do not require the divisors to have $\Q$-coefficients.

\begin{lem}\label{lem-convex-com}
	Let $(X/Z,B+M)$ be a g-slc NQC pair with data $\overline{M}$ and $\nu : X^\nu \to X$ be the normalisation. Assume
	$K_{X^\nu} +B^\nu +M^\nu:= \nu^*(K_X +B+M)$ is semi-ample$/Z$. Then there exists convex combinations $B= \sum_i \alpha_i B^j$ and $\overline{M}= \sum_i \alpha_i \overline{M}^j$ such that
	\begin{enumerate}
		\item $(X,B^j+M^j)$ is a g-slc pair with data $\overline{M}^j$;
		
		\item $B^j$, $\overline{M}^j$ are $\Q$-divisors for every $j$;
		
		\item $K_{X^\nu} +B^{j,\nu} +M^{j,\nu}:= \nu^*(K_X +B^j+M^j)$ is semi-ample$/Z$ for every $j$.
	\end{enumerate}
   Moreover, if $\overline{M}$ is (log) abundant, then we can require $\overline{M}^j$ to be (log) abundant as well.
\end{lem}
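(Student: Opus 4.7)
The plan is to realise $(B,\overline{M})$ as an interior point of a rational polytope $P$ inside a finite-dimensional affine space of divisors and data, so that every rational vertex of $P$ automatically gives a g-slc NQC pair satisfying (1)--(3). First, since $\overline{M}$ is NQC I write $\overline{M}=\sum_i \alpha_i \overline{N}_i$ on a common log resolution $\overline{X}$, with each $\overline{N}_i$ a $\Q$-Cartier nef$/Z$ divisor. Condition (2) of Definition \ref{defn-g-slc-pair} amounts to finitely many rational linear equations of the form $\alpha^{*}(\overline{N}|_{\overline{V}_i})=\beta^{*}(\overline{N}|_{\overline{V}_j})$; I arrange for the NQC decomposition to lie in the rational linear subspace cut out by these equations, which is possible because $\overline{M}$ itself lies in it. By Lemma \ref{lem-convex-combination} I then shrink the rational polytope $\mathcal{N}$ spanned by the $\overline{N}_i$ so that every vertex of $\mathcal{N}$ is additionally (log) abundant$/Z$ when $\overline{M}$ is.

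Next I record the semi-ampleness in a useful form. The divisor $K_{X^\nu}+B^\nu+M^\nu$ defines a contraction $f\colon X^\nu\to W$ over $Z$ with $K_{X^\nu}+B^\nu+M^\nu\sim_\R f^{*}A_W/Z$ for some ample$/Z$ $\R$-divisor $A_W$. I decompose $A_W=\sum_k \beta_k A_W^k$ with each $A_W^k$ ample $\Q$-Cartier, so by Lemma \ref{lem-semi-ample-divisor} each $f^{*}A_W^k$ is semi-ample$/Z$. Now I let $V$ be the real affine space of tuples $(B',(\gamma_i),(\delta_k))$ with $B'$ supported on the components of $B$, and impose on $V$ the rational linear equation $\nu^{*}(K_X+B'+\sum_i \gamma_i M_i)\sim_\R \sum_k \delta_k f^{*}A_W^k/Z$, where $M_i$ is the pushforward of $\overline{N}_i$; the rational inequalities $0\le b_j'\le 1$ on the coefficients of $B'$, $(\gamma_i)\in \mathcal{N}$, and $\delta_k\ge 0$; and the finitely many rational polyhedral inequalities expressing generalised log canonicity on each component $X_i^\nu$. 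The cut-out set $P\subseteq V$ is a rational polytope containing $(B,\alpha,\beta)$ in its relative interior.

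Writing $(B,\alpha,\beta)$ as a convex combination of rational vertices $(B^j,\alpha^j,\beta^j)$ of $P$ yields the desired decomposition: each vertex defines a pair $(X,B^j+M^j)$ with $\overline{M}^j=\sum_i \alpha_i^j \overline{N}_i$, which is $\Q$-coefficient g-slc NQC by construction, and satisfies $\nu^{*}(K_X+B^j+M^j)\sim_\R \sum_k \beta_k^j f^{*}A_W^k$, semi-ample$/Z$ by Lemma \ref{lem-semi-ample-divisor}(1)--(2); the (log) abundance of $\overline{M}^j$ when required is ensured by Step 1. The main obstacle is simultaneously enforcing condition (2) of Definition \ref{defn-g-slc-pair}---which is inherited by convex combinations but not by arbitrary NQC decompositions---together with the g-slc discrepancy inequalities and the pullback-ampleness equation on the normalisation, while keeping $(B,\overline{M})$ in the relative interior of $P$; this is handled by observing that all of these are rational linear or rational polyhedral conditions, so their intersection cuts out a rational sub-polytope still containing the given data.
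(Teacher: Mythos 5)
Your argument is correct and is essentially the paper's own: both realise $(B,\overline{M})$ as a point of a rational polytope cut out by rational linear/polyhedral conditions (generalised log canonicity, the conductor compatibility of the data, and persistence of semi-ampleness recorded via the ample model and finitely many fixed rational functions) and then take rational vertices, with Lemma \ref{lem-convex-combination} supplying the (log) abundance of the pieces. The only difference is organisational: the paper builds component-wise polytopes $\mathcal{Q}_i$ on a log resolution of $X^{\nu}$ and matches them along the double locus, whereas you impose the double-locus compatibility as linear equations on a single global polytope, which if anything makes the compatibility of the vertex decompositions across components more transparent.
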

\begin{proof}
	Let $\pi:(\overline{X},\overline{B}+\overline{M}) \to (X^\nu,B^\nu+M^\nu) $ be a log resolution, and write $\overline{X}=\coprod_i \overline{X}_i$ for the irreducible decomposition. For each $i$, there exists a rational polytope $\mathcal{Q}_i \subset \mathcal{P}_{B_i} \times \mathcal{P}_{M_i}$ containing the point $(\overline{B}_i,\overline{M}_i)$ such that $(X_i^\nu, \Delta_i^\nu +N_i^\nu)$ is g-dlt and $K_{X_i^\nu}+\Delta_i^\nu +N_i^\nu$ is semi-ample$/Z$ for every point $(\overline{\Delta}_i,\overline{N}_i) \in \mathcal{Q}$. Now consider the double locus $\overline{S} \le \overline{B}^{=1}$ and its irreducible decomposition $\overline{S}= \sum_k \overline{S}_{k,1} + \sum_k \overline{S}_{k,2} $, where 
	$$(\overline{S}_{k,1},B_{\overline{S}_{k,1}}+\overline{M}|_{\overline{S}_{k,1}}) \bir (\overline{S}_{k,2},B_{\overline{S}_{k,2}}+\overline{M}|_{\overline{S}_{k,2}})
	$$ is $B$-birational over $X$. Note that the restriction to $S_{k,j}$ gives an affine map of the polytope $\mathcal{Q}_i$. Taking intersections and modifying the polytopes we can assume $\mathcal{Q}_i|_{\overline{S}_{k,1}}=\mathcal{Q}_j|_{\overline{S}_{k,2}}$. Therefore, the vertices of these polytopes give the required convex combination.
	
	The last assertion follows from Lemma \ref{lem-convex-combination}.
\end{proof}

\begin{thm}[\text{cf.\cite[Theorem 1.4]{fujino-gongyo}\cite[Theorem 1.4]{haconxu}}]\label{thm-semiample-slc}
	Let $(X/Z,B+M)$ be a g-slc NQC pair and $\nu : X^\nu \to X$ be the normalisation. Assume
	$K_{X^\nu} +B^\nu +M^\nu:= \nu^*(K_X +B+M)$ is semi-ample$/Z$. Then $K_X +B+M$ is semi-ample$/Z$.
\end{thm}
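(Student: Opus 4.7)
The strategy is to reduce to the setting of Theorem \ref{thm-adm-gen} on a $\Q$-factorial g-sdlt modification and then use Lemma \ref{lem-descend} to descend admissible sections from the normalisation to $X$.

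First I would reduce to a tractable setting. Since semi-ampleness is local on the base, I may assume $Z = \Spec A$ is affine. Applying Lemma \ref{lem-convex-com} I write $K_X + B + M$ as a convex $\R$-linear combination of divisors $K_X + B^j + M^j$ such that each $(X, B^j + M^j)$ is a g-slc pair with $\Q$-boundary and $\Q$-data, and such that the pullback to $X^\nu$ remains semi-ample$/Z$. By Lemma \ref{lem-semi-ample-divisor}(1)-(2), it is enough to prove the conclusion for each $(X, B^j + M^j)$; thus I may assume $K_X + B + M$ is $\Q$-Cartier and that $\mathbf{M}$ is a $\Q$-b-Cartier b-divisor.

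Next I would construct a $\Q$-factorial g-sdlt modification $\pi: (Y, B_Y + M_Y) \to X^\nu$ with very general data such that $K_Y + B_Y + M_Y = \pi^*(K_{X^\nu} + B^\nu + M^\nu)$. Component-by-component on $X^\nu$, this can be done by first taking a log resolution on which $\mathbf{M}$ descends, then running an appropriate $\overline{M}$-trivial MMP as in the proof of Lemma \ref{dlt-resolution} to contract exceptional divisors of log discrepancy zero, yielding a $\Q$-factorial g-dlt model for each component. The compatibility condition (2) of Definition \ref{defn-g-slc-pair} on the double locus, which is built into the definition of the g-slc pair, ensures that the $B$-birational identifications between strata above conductor components persist on $Y$. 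By Remark \ref{rem-g-data}(2) I may further choose the data on $Y$ to be very general. Because $K_{X^\nu} + B^\nu + M^\nu$ is semi-ample$/Z$ and pulls back to $K_Y + B_Y + M_Y$, the latter is also semi-ample$/Z$ by Lemma \ref{lem-semi-ample-divisor}(3).

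Now I would apply Theorem \ref{thm-adm-gen} on $Y$: for every sufficiently large and divisible integer $m$, the admissible sections $\mathcal{A}(Y, m(K_Y+B_Y+M_Y))$ generate $\mathcal{O}_Y(m(K_Y+B_Y+M_Y))$. By Lemma \ref{lem-descend}, every admissible section descends to a section of $\mathcal{O}_X(m(K_X+B+M))$. Crucially, the admissibility condition requires invariance under all $B$-birational maps between components over $\Spec A$, which in particular includes the identifications imposed by the conductor structure of $X$; this is precisely what makes gluing to $X$ possible at codimension-one singularities, and $S_2$-ness of $X$ then extends the gluing across higher codimension. Hence the descended sections generate $\mathcal{O}_X(m(K_X+B+M))$, giving base-point-freeness of $m(K_X+B+M)$ over $Z$ and therefore semi-ampleness.

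The hard part is the second paragraph: establishing the existence of the $\Q$-factorial g-sdlt modification $Y \to X^\nu$ in a way that respects the slc gluing data on the conductor. One needs to verify that the $B$-birational identifications $(\overline{V}_i, \overline{M}|_{\overline{V}_i}) \bir (\overline{V}_j, \overline{M}|_{\overline{V}_j})$ between conductor strata lift to $B$-birational identifications between the corresponding strata on $Y$, so that admissibility on $Y$ is a genuinely stronger condition than admissibility component-by-component. This requires a careful choice of the resolution and the MMP, plus the content of Lemmas \ref{dlt-resolution}, \ref{lem-g-dlt} and \ref{lem-g-sdlt}, together with the $B$-birational invariance of the data along strata that lies at the heart of the slc definition.
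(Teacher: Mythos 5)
Your proposal is correct and follows essentially the same route as the paper: reduce to $Z$ affine and to $\Q$-coefficients via Lemma \ref{lem-convex-com} and Lemma \ref{lem-semi-ample-divisor}(1), pass to (very) general data as in Remark \ref{rem-g-data}, and then combine the global generation of admissible sections (Theorem \ref{thm-adm-gen}) on a g-dlt/g-sdlt modification of $X^\nu$ with the descent Lemma \ref{lem-descend}. The paper's own proof is just a four-line version of this; the extra detail you supply on constructing the modification and preserving the conductor identifications is exactly what Lemmas \ref{dlt-resolution}, \ref{lem-g-dlt} and \ref{lem-g-sdlt} are invoked for.
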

\begin{proof}
	We can assume $Z=\Spec A$ is affine. By Lemma \ref{lem-convex-com} and Lemma \ref{lem-semi-ample-divisor}(1) for non-irreducible schemes, we can assume $B,\overline{M}$ are $\Q$-divisors. By Remark \ref{rem-g-data} we can assume $\overline{M}$ are general data. Combining Lemma \ref{lem-descend} and Theorem \ref{thm-adm-gen} we conclude the theorem.
\end{proof}

\begin{rem}
	Quite recently, a question is raised by K. Hashizume \cite{has-nonvanishing} that if the abundance conjecture in the setting of \cite[Theorem 1.3]{has-nonvanishing} holds. We believe it should be possible to achieved by Theorem \ref{thm-semiample-slc}.
\end{rem}

\subsection{Semi-ampleness for g-dlt pairs}\label{sec4-3}
Finally we close the paper with the following theorem. Also, we do not require the divisors to have $\Q$-coefficients.
\begin{thm}[\text{cf.\cite[Theorem 3.3]{hu}\cite[Corollary 1.5]{haconxu}}]\label{thm-lc-flips-3}
	Let $(X/Z, B+M)$ be a $\Q$-factorial g-dlt NQC pair with abundant data. Suppose that
	
	$\bullet$ $K_X + B+M$ is nef$/Z$,
	
	$\bullet$ $(K_X + B+M)|_{S_i}$ is semi-ample$/Z$ for each irreducible component $S_i$ of $\rddown{B}$,
	
	$\bullet$ $K_X + B - \epsilon P+M$ is semi-ample$/Z$ for some divisor $P \geq 0$ with $\mathrm{Supp} P = \rddown{B}$
	and for any sufficiently small number $\epsilon > 0$.
	
	Then, $K_X + B+M$ is semi-ample$/Z$.	
\end{thm}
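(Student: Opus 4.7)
The plan is to combine Theorem \ref{thm-semiample-slc} with a contraction-plus-extension argument. After replacing $Z$ with an affine open, set $S=\rddown{B}$ with irreducible decomposition $S=\sum_i S_i$. By generalised adjunction, Lemma \ref{lem-slc-contraction} and Lemma \ref{lem-g-sdlt}(3), the induced structure $(S,B_S+M_S)$ is a g-sdlt NQC pair with abundant data whose normalisation is $\coprod_i S_i$, and by the second hypothesis $(K_X+B+M)|_{S^\nu}$ is semi-ample$/Z$. Theorem \ref{thm-semiample-slc} then yields that $(K_X+B+M)|_S$ is semi-ample$/Z$.

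Next, I would use the third hypothesis to produce, for each sufficiently small rational $\epsilon>0$, a contraction $f_\epsilon:X\to Y_\epsilon$ over $Z$ with $K_X+B-\epsilon P+M\sim_{\R} f_\epsilon^* A_\epsilon$, where $A_\epsilon$ is ample$/Z$. A rationality argument on the face of the relative nef cone spanned by these rays, valid in the NQC setting, shows that for all sufficiently small $\epsilon$ these contractions coincide with a single $f\colon X\to Y$ over $Z$. Comparing two small values $\epsilon_1>\epsilon_2>0$ forces any $f$-contracted curve $C$ to satisfy $P\cdot C=0$ and $(K_X+B+M)\cdot C=0$, so both $P$ and $K_X+B+M$ are numerically trivial over $Y$; using the identity $K_X+B+M = f^* A_\epsilon+\epsilon P$ together with the base-point-free theorem for g-pairs, one obtains $K_X+B+M\sim_{\R} f^* L/Z$ for some $\R$-Cartier divisor $L$ on $Y$, and it remains to prove that $L$ is semi-ample$/Z$.

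On $Y\setminus f(S)$ the divisor $L$ agrees with $A_\epsilon$, hence is ample, while on $f(S)$ the pullback $f^* L|_S = (K_X+B+M)|_S$ is semi-ample by the first step. The passage from these partial pieces of semi-ampleness to global semi-ampleness of $L$ on $Y$ is carried out by lifting admissible sections of $m(K_X+B+M)|_S$ for $m$ sufficiently large and divisible to admissible sections on $X$, using the machinery of Subsection \ref{sec4-1} together with a vanishing theorem powered by the strictly positive perturbation $K_X+B-\epsilon P+M$. Together with the pullback sections of $m f^* A_\epsilon$, the extended admissible sections generate $\mathcal{O}_X(m(K_X+B+M))$.

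The hard part is this last extension step, and it is precisely here that all three hypotheses are needed simultaneously: the semi-ample perturbation $K_X+B-\epsilon P+M$ supplies the positivity required by the vanishing theorem, the abundance of the data $\overline{M}$ keeps the induced structures on strata inside the category where Theorem \ref{thm-adm-gen} applies, and the semi-ampleness of $(K_X+B+M)|_{S_i}$ furnishes the admissible boundary sections that need to be extended. Technical care is also required to descend $P$ along $f$ as a $\R$-Cartier divisor (not merely a numerical class), which relies on the g-pair cone and contraction theory in the NQC setting.
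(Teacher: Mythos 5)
Your proposal correctly identifies the two forces at play (abundance for the g-slc boundary via Theorem \ref{thm-semiample-slc}, and the semi-ample perturbation $K_X+B-\epsilon P+M$ controlling everything away from $\rddown{B}$), but it has a genuine gap: the step you yourself call ``the hard part'' --- extending sections of $m(K_X+B+M)|_S$ to $X$ --- is asserted, not proved, and it is precisely the content of the theorem. The paper's mechanism is quite specific: first reduce to $\Q$-coefficients (Step 1 builds rational polytopes $\mathcal{L}\supset\mathcal{L}_i$ preserving g-lc-ness, nefness, and semi-ampleness on each $S_i$; then Diophantine approximation, Lemma \ref{diophantine}, writes $K_X+B+M$ as a convex combination of nearby $\Q$-divisors $K_X+S+\Delta_k+M_k$ with $m_k(K_X+S+\Delta_k+M_k)$ Cartier). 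Only then can one write
\begin{align*}
m_k(K_X+S+\Delta_k+M_k)-S = K_X+(\text{klt part})+(m_k-1)(K_X+B+M-\epsilon P)
\end{align*}
and invoke Kawamata--Viehweg vanishing together with Fujino's injectivity theorem (the perturbation is only semi-ample, not big, so plain vanishing does not suffice) to get surjectivity of $H^0(X,\cdot)\to H^0(S,\cdot)$. Your proposal never addresses the $\R$-coefficient issue, and ``a vanishing theorem powered by the perturbation'' is not yet an argument. Note also that what gets lifted is not admissible sections; the admissible-section machinery lives inside the proof of Theorem \ref{thm-semiample-slc}, which here is used only as a black box to globally generate the sheaf on $S$.

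Separately, your detour through the ample model $f\colon X\to Y_\epsilon$ of the perturbation is both unnecessary and itself gappy: the stabilisation of $f_\epsilon$ for small $\epsilon$ and the descent of $K_X+B+M$ to an $\R$-Cartier divisor $L$ on $Y$ (from mere numerical triviality of $\epsilon P$ on contracted curves) are nontrivial claims in the NQC g-pair setting and are not established in the paper. The paper avoids $Y$ entirely: once sections lift, $\mathbf{B}(|K_X+B+M/Z|_\R)$ misses $S$; since $K_X+B+M=(K_X+B+M-\epsilon P)+\epsilon P$ with the first summand semi-ample and $\Supp P=\rddown{B}=S$, the base locus is also contained in $S$, hence empty; Lemma \ref{semi-ample} then converts empty $\R$-stable base locus into semi-ampleness on the $\Q$-factorial $X$. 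I would restructure your argument along these lines and supply the vanishing/approximation step in full.
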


An important ingredient of the proof of Theorem \ref{thm-lc-flips-3} is Diophantine approximation as follows.

\begin{lem}[\text{\cite[Lemma 3.7.7]{bchm}}]\label{diophantine}
	Let $\mathcal{C}$ be a rational polytope contained in a real vector
	space $V$ of dimension $n$, which is defined over the rationals. Fix a
	positive integer $k$ and a positive real number $\alpha$.
	If $v \in \mathcal{C}$, then we may find vectors $v_1$, $v_2$, $\dots$, $\in \mathcal{C}$ and positive
	integers $m_1$, $m_2$, $\dots$, $m_p$, which are divisible by $k$, such that $v$ is a convex
	linear combination of the vectors $v_1$, $v_2$, $\dots$, $v_p$ and
	\begin{align*}
		\|v_i-v\|\leq \frac{\alpha}{m_i} \mathrm{~where~} \frac{m_iv_i}{k} \mathrm{~is~integral}
	\end{align*}
	where $\|v_i-v\|$ denotes the norm of $v_i-v$ in $\R^n$.
\end{lem}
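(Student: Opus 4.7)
The plan is to follow the strategy of \cite[Theorem 3.3]{hu}, bootstrapping from the admissible section machinery of Subsection \ref{sec4-1} and the abundance theorem for g-slc pairs (Theorem \ref{thm-semiample-slc}) established above. Write $S:=\rddown{B}$ for the reduced boundary part. The goal is to show that $|m(K_X+B+M)/Z|$ is base point free for some sufficiently divisible positive integer $m$, from which semi-ampleness is immediate.

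First, I would reduce to the case where $B$ and $\overline{M}$ have $\Q$-coefficients. Using Lemma \ref{diophantine} on the rational polytope spanned by the coefficients of $B$ together with the vertices of the NQC decomposition of $\overline{M}$, write $(B,\overline{M})=\sum_j \lambda_j(B^{(j)},\overline{M}^{(j)})$ as a convex combination of nearby rational pairs. Because the g-dlt condition, the semi-ampleness of each restriction $(K_X+B+M)|_{S_i}$, and the semi-ampleness of $K_X+B-\epsilon P+M$ for a uniform small $\epsilon>0$ all persist under sufficiently small perturbation in this polytope, each $(X,B^{(j)}+M^{(j)})$ inherits the three hypotheses; Lemma \ref{lem-semi-ample-divisor}(1) then reduces the problem to the $\Q$-coefficient case. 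Next, by g-dlt adjunction (Lemma \ref{lem-g-sdlt}(3)), the restriction $(S,B_S+M_S)$ is a g-sdlt NQC pair whose normalisation is $\coprod_i S_i$, so Theorem \ref{thm-semiample-slc} upgrades the componentwise semi-ampleness from the second hypothesis to semi-ampleness of $(K_X+B+M)|_S$ on the full g-slc scheme $S$. Replacing $\overline{M}$ by a very general datum as in Remark \ref{rem-g-data} and invoking Theorem \ref{thm-adm-gen}, the admissible sections $\mathcal{A}(S,m(K_X+B+M)|_S)$ generate $\mathcal{O}_S(m(K_X+B+M)|_S)$.

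Base point freeness of $|m(K_X+B+M)/Z|$ away from $S$ is then automatic: for $x\in X\setminus S$, pick $D\in|m(K_X+B-\epsilon P+M)/Z|$ with $x\notin\Supp D$; then $D+m\epsilon P\in|m(K_X+B+M)/Z|$ still avoids $x$ because $\Supp P\subseteq S$. The core technical task is to lift every admissible section on $S$ to a global section on $X$ via the exact sequence
\[
0\to \mathcal{O}_X(m(K_X+B+M)-S)\to \mathcal{O}_X(m(K_X+B+M))\to \mathcal{O}_S(m(K_X+B+M)|_S)\to 0,
\]
for which it is enough to establish $R^1\pi_*\mathcal{O}_X(m(K_X+B+M)-S)=0$ with $\pi:X\to Z$. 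One rewrites
\[
m(K_X+B+M)-S = K_X + \bigl(B-S+(m-1)\epsilon P\bigr) + M + (m-1)(K_X+B-\epsilon P+M),
\]
so that for $\epsilon$ sufficiently small the perturbed pair $(X,(B-S+(m-1)\epsilon P)+M)$ is g-klt NQC while the twist $(m-1)(K_X+B-\epsilon P+M)$ is semi-ample$/Z$; the desired higher direct image vanishing then follows from a Kollár-type injectivity/torsion-freeness theorem for generalised pairs, which does not require bigness.

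The principal obstacle is precisely this vanishing step, because $K_X+B+M$ need not be big$/Z$ and so classical Kawamata–Viehweg is unavailable. The remedy is to argue relatively via the contraction $f:X\to Y/Z$ defined by the semi-ample class $K_X+B-\epsilon P+M$: on $Y$ the twist becomes the pullback of an ample class, allowing Kollár-style vanishing for the g-klt pair above, and sections are then assembled over $Z$ via the Leray spectral sequence. The NQC and abundant data hypotheses enter here to ensure that the generalised vanishing and restriction behave well under pullback through $f$ and through the normalisation of $S$. Once this lifting is secured, the admissible section construction together with Lemma \ref{lem-descend} produces, for every $x\in S$, a global section of $m(K_X+B+M)$ on $X$ not vanishing at $x$; combined with the base point freeness away from $S$ this completes the proof.
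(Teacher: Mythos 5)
Your proposal does not prove the statement at hand. The statement is Lemma \ref{diophantine}, the purely arithmetic Diophantine approximation result for a rational polytope $\mathcal{C}\subset V$: given $v\in\mathcal{C}$, one must produce rational points $v_i\in\mathcal{C}$ with controlled denominators (each $\tfrac{m_i v_i}{k}$ integral, $m_i$ divisible by $k$) satisfying $\|v_i-v\|\le \alpha/m_i$, whose convex hull contains $v$. This has nothing to do with generalised pairs, admissible sections, g-slc abundance, or vanishing theorems. What you have written is instead a sketch of the proof of Theorem \ref{thm-lc-flips-3}, and it even \emph{invokes} Lemma \ref{diophantine} as an ingredient in its first reduction step --- so as an argument for the lemma itself it would be circular, and as written it simply addresses the wrong statement.

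For the record, the paper gives no proof of this lemma: it is quoted verbatim from \cite[Lemma 3.7.7]{bchm}, where it is established by an elementary simultaneous Diophantine approximation argument. The idea is: since $\mathcal{C}$ is defined over $\Q$, one may work in the smallest rational affine subspace containing $v$; by the pigeonhole principle (Dirichlet-type simultaneous approximation) applied to the fractional parts of the multiples of $v$ scaled by $k$, one finds integers $m_i$ divisible by $k$ and lattice points $\tfrac{m_i v_i}{k}$ with $\|v_i-v\|\le \alpha/m_i$, and by choosing approximants on all sides (using that $v$ lies in the relative interior of a rational simplex around it once the approximation is fine enough, and that $\mathcal{C}$ is cut out by rational inequalities so nearby rational points with such denominators stay in $\mathcal{C}$) one arranges that $v$ is a convex combination of the $v_i$. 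If your task was to supply a proof, this is the argument you should reproduce or cite, not the abundance machinery of Section \ref{sec4}.
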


The following lemma is \cite[Lemma 3.6]{hu}. We present a proof for the reader's convenience.

\begin{lem}\label{semi-ample}
	Let $f:X \rightarrow Z$ be a projective morphism of normal varieties and $D$ be a divisor on $X$. Assume that $X$ is $\Q$-factorial, $Z$ is quasi-projective and $\mathbf{B}(|D/Z|_\R)= \emptyset$. Then, $D$ is semi-ample$/Z$. 
\end{lem}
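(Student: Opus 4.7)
The plan is to reduce to the $\Q$-Cartier case --- for which $\mathbf{B}(|D'/Z|_\Q)=\emptyset$ is already known to imply semi-ampleness over $Z$ (see Subsection~\ref{sec2-defn}) --- by expressing $D$ as a convex $\R$-combination of semi-ample $\Q$-Cartier divisors and then invoking Lemma~\ref{lem-semi-ample-divisor}(1).

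First, the hypothesis $\mathbf{B}(|D/Z|_\R)=\emptyset$ yields finitely many effective $L_1,\ldots,L_k \in |D/Z|_\R$ with $\bigcap_i \Supp(L_i)=\emptyset$. For each $i$, I fix an explicit realisation $L_i - D = \sum_j r_{ij}(g_{ij}) + f^*E_i$, where the $g_{ij}$ are rational functions on $X$, $r_{ij} \in \R$, and $E_i \in \CDiv_\R(Z)$. I then assemble all these data into a single point $P_0$ of a rational polytope $\mathcal{R}$ sitting inside a finite-dimensional real vector space $T$ with natural $\Q$-structure. The coordinates of $T$ record the coefficients of an abstract divisor $D'$ and of each $L_i'$ on a fixed finite set $\mathcal{B}$ of prime divisors on $X$ chosen to contain $\Supp(D) \cup \bigcup_i \Supp(L_i) \cup \bigcup_{i,j} \Supp((g_{ij})) \cup \bigcup_i \Supp(f^*E_i)$, the coefficients of each $E_i'$ with respect to a fixed finite set of integral Cartier divisors on $Z$ whose $\R$-span contains all the $E_i$, and auxiliary scalars $r_{ij}' \in \R$. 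The polytope $\mathcal{R} \subset T$ is cut out by (a) $L_i' = D' + \sum_j r_{ij}'(g_{ij}) + f^*E_i'$, (b) $L_i' \geq 0$, and (c) $\mult_B L_i'=0$ for every $B \in \mathcal{B} \setminus \Supp(L_i)$. Since $(g_{ij})$ and each pullback $f^*F$ of an integral Cartier divisor $F$ on $Z$ has integer coefficients when expanded in $\mathcal{B}$, all constraints defining $\mathcal{R}$ are $\Q$-linear, so $\mathcal{R}$ is a rational polytope containing $P_0$.

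Next, I would apply Lemma~\ref{diophantine} to write $P_0 = \sum_s \alpha_s P_s$ as a convex combination of rational points $P_s \in \mathcal{R}$ with $\alpha_s>0$. Projecting to the $D'$-coordinate produces $\Q$-Weil divisors $D^{(s)}$, which are $\Q$-Cartier by the $\Q$-factoriality of $X$, and satisfy $D = \sum_s \alpha_s D^{(s)}$. The remaining coordinates of $P_s$ supply effective $\Q$-divisors $L_i^{(s)} \sim_\Q D^{(s)}/Z$ with $\Supp(L_i^{(s)}) \subseteq \Supp(L_i)$, yielding $\bigcap_i \Supp(L_i^{(s)})=\emptyset$ and hence $\mathbf{B}(|D^{(s)}/Z|_\Q)=\emptyset$. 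Thus each $D^{(s)}$ is semi-ample over $Z$, and Lemma~\ref{lem-semi-ample-divisor}(1) concludes that $D$ is semi-ample over $Z$. The main technical obstacle is setting up the polytope $\mathcal{R}$ so that the $\R$-linear equivalences $L_i' \sim_\R D'/Z$ are captured by $\Q$-linear constraints; the key observation making this work is that the generators of linear equivalence over $Z$ --- the divisors $(g_{ij})$ of rational functions and the pullbacks $f^*F$ of integral Cartier divisors --- are always integral, so the only source of irrationality in $P_0$ lies in its coordinates, and rational approximation of these coordinates preserves all the equivalences simultaneously.
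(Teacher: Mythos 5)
Your argument is correct, and at its core it is the same proof as the paper's: encode the relevant $\R$-linear equivalences over $Z$ in a rational polytope (rational precisely because divisors of rational functions and pullbacks of integral Cartier divisors are integral), write $D$ as a convex combination of its rational points, observe that each such $\Q$-point is $\Q$-Cartier by $\Q$-factoriality and has empty $\Q$-stable base locus, and finish with Lemma \ref{lem-semi-ample-divisor}(1). The one organizational difference is how the finitely many effective members are produced: you invoke Noetherian compactness once at the outset to get $L_1,\dots,L_k\in|D/Z|_\R$ with $\bigcap_i\Supp L_i=\emptyset$ and build a single polytope recording all of them, whereas the paper first replaces $D$ by an effective member, then iteratively adjoins one new $M_i\sim_\R D$ per prime component of the current base locus and shrinks the polytope so that the $\Q$-stable base locus of its rational points drops in dimension at each stage; your version is a bit more streamlined and also handles general quasi-projective $Z$ directly (via the $f^*E_i'$ coordinates) rather than reducing to $Z$ affine. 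Two cosmetic points to tidy up: the set $\mathcal{B}$ should also contain the supports of the pullbacks of the fixed integral Cartier divisors spanning the $E_i$ (not just of $f^*E_i$ itself), and the region cut out by (a)--(c) is a priori only a rational polyhedron, so intersect it with a rational box around $P_0$ before applying Lemma \ref{diophantine} (or simply note that $P_0$ lies in the convex hull of nearby rational points of a rational polyhedron).
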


\begin{proof}
	We can assume $Z=\Spec A$. By replacing $D$ with a member of $|D|_\R$, we can assume that $D$ is effective. Let $\sum_i D_i$ be the irreducible
	decomposition of $\mathrm{Supp} D $. For each $i$ there exists a divisor $M_i \sim_\R D$ such that $M_i$ does not contain $D_i$ in its support. More precisely, we write 
	$$
	D+ \sum_j a_{ij}(g_{ij})  =M_i
	$$
	where $a_{ij} \in \R$ and $g_{ij} \in k(X)$. Let $\sum_k M_{ik}$ be the irreducible
	decomposition of $\mathrm{Supp} M_i $, let $\mathcal{V}:= \sum_i \R_{\geq 0} D_i$, $\mathcal{W}_i= \sum_k \R_{\geq 0} M_{ik}$, and let $\mathcal{R}_i =\sum_j \R (g_{ij})$. It is easy to verify that there are rational polyhedrons 
	\begin{align*}
		\mathcal{L}_i \subset \{D' \in \mathcal{V}| D'+\mathcal{R}_i \mathrm{~intersects~with~}  \mathcal{W}_i \}
	\end{align*}
	containing $D$, and for every $\Q$-divisor $D' \in \mathcal{L} =\bigcap_i \mathcal{L}_i$, we have  $$\mathbf{B}(|D'|_\Q) \subset \bigcup_i (D_i \bigcap \mathrm{Supp}(M_i)). $$ 
	Note that $\dim D_i \bigcap \mathrm{Supp}(M_i) < \dim D_i$. Next we pick a divisor $M_i'$ for each $i$ such that $M_i'$ does not contain $B_i \bigcap \mathrm{Supp}(M_i)$ in its support. In a similar way we shrink $\mathcal{L}$ to a smaller polytope such that for every $\Q$-divisor $D' \in \mathcal{L} $, $$\mathbf{B}(|D'|_\Q) \subset \bigcup_i (D_i \bigcap \mathrm{Supp}(M_i) \bigcap  \mathrm{Supp}(M_i')) .$$ By repeating this process we obtain a rational polytope $\mathcal{L}$ containing $D$ such that every $\Q$-divisor $D' \in \mathcal{L}$ has $\mathbf{B}(|D'|_\Q)=\emptyset$, hence $D'$ is semi-ample. Therefore, $D$ is a convex combination of semi-ample $\Q$-divisors, which concludes the lemma by Lemma \ref{lem-semi-ample-divisor}.
\end{proof}

%If $B$ is a $\Q$-divisor, then the conclusion follows from a similar proof of [\ref{HX}, Corollary 1.5] with an aid of an injectivity theorem (see [\ref{Fujino}, Theorem 6.1], [\ref{Ambro-2}, Theorem 2.3], [\ref{Fujino-3}, Proposition 5.1.1], [\ref{Kollar}], etc.). We therefore assume that $B$ is an $\R$-divisor with some coefficient irrational. 

\begin{proof}[Proof of Theorem \ref{thm-lc-flips-3}]
	We can assume $Z= \Spec A$, $X$ is projective over $Z$, and divide the proof into two steps. If $B$ has $\Q$-coefficients and $M$ is $\Q$-Cartier, then one can skip Step $1$ and directly go to Step $2$. Otherwise, by arguments of Step $1$, we reduce to the situation that we can apply Diophantine approximation.
	
	{\noindent \textbf{Step 1.}} Let $S=\lfloor B\rfloor$ and $\sum_i B_i$ be the irreducible
	decomposition of $\mathrm{Supp} \Delta$ where $\Delta=\{B\}$ is the fractional part of $B$. Now fix $\overline{M} \ge 0$, such that $(X,\Delta+M)$ is klt, and let $\overline{M}=\sum_j \alpha_j \overline{M}_j$ be the convex combination of nef $\Q$-Cartier divisors. Replacing $\overline{M}_j$, we can assume $\overline{M}_j \ge 0$. By Lemma \ref{lem-convex-combination}, we can assume that, for every $i$,
	\begin{itemize}
		\item $\overline{M}_j$ is nef and abundant, and
		
		\item $\overline{M}_j$ is supported by $\Supp \overline{M}$.
	\end{itemize} 
    We put $\mathcal{V}:=(\sum_i [0,1] B_i) \times \mathcal{M}$ where $\mathcal{M}$ is the rational polytope given by the convex combination of $\{ \overline{M}_j\}$. It is easy to check that
	$$
	\mathcal{L} =\{(\Theta,\overline{N}) \in \mathcal{V} |(X, S+\Theta+N)\text{ is g-lc with data $\overline{N}$, and } K_X+S+\Theta+N \mathrm{~is~nef}/Z  \}
	$$
	is a rational polytope containing $(\Delta,\overline{M})$ (cf.\cite[Proposition 3.16]{hanli}\cite[Proposition 3.2]{birkar-existII}).

	Let $S_i$ be a component of $S$. There exists an $\R$-rational function $\varphi_i(\overline{M})$, depending on $\overline{M}$, such that $\overline{M}+(\varphi_i(\overline{M}))$ does not contain the strict transform of $S_i$ in its support. Moreover, by Lemma \ref{lem-convex-combination}, and possibly shrinking $\mathcal{M}$, there is an affine surjective map $\mathcal{M} \to \mathcal{M}_i$ of rational polytopes, by sending $\overline{N}$ to $\overline{N}+(\varphi_i(\overline{N}))$, such that any $\overline{N}' \in \mathcal{M}_i$ does not contain $S_i$ in its support.
	
	Since $K_{S_i}+B_{S_i}+M_{S_i}= (K_X+B+M+(\varphi_i(\overline{M})))|_{S_i}$ is semi-ample$/Z$, there is a morphism $g_i:S_i \rightarrow V_i$ where $V_i$ is the ample model. So, there is an ample$/Z$ divisor $A_i$ and a finite number of rational functions $h_{ij}$ on $S_i$ such that
	$$
	K_{S_i}+B_{S_i}+M_{S_i}+\sum_j a_{ij}(h_{ij})= g_i^\ast A_i 
	$$ where $a_{ij}\in \R$. 
	Let $T_i=\lfloor B_{S_i} \rfloor$ and $\sum_k B_{S_i,k}$ be the irreducible
	decomposition of $\mathrm{Supp}\{ B_{S_i}\}$. Define $\mathcal{W}_i=(\sum_k [0,1]B_{S_i,k}) \times \mathcal{M}_{S_i}$, where $\mathcal{M}_{S_i}=\mathcal{M}_i|_{S_i}$, and define $\mathcal{R}_i=\sum_j \R(h_{ij})$ and $\mathcal{A}_i \subset \mathrm{Div}_\R(V_i)$ as a sufficiently small rational polytope containing $A_i$. One easily verifies that 
	\begin{align*}
		\mathcal{P}_i := \{ (\Theta_{S_i}, \overline{N}_{S_i}) \in \mathcal{W}_i|& (S_i, T_i +\Theta_{S_i}+N_{S_i})\text{ is g-lc with data $\overline{N}_{S_i}$, and } \\
		& K_{S_i}+T_i +\Theta_{S_i}+N_{S_i}+ \mathcal{R}_i\mathrm{~intersects~with~}g_i^\ast \mathcal{A}_i \}
	\end{align*}
	is a rational polytope in $\mathcal{W}_i$. It follows that
	\begin{align*}
		\mathcal{L}_i := \{& (\Theta,\overline{N}) \in \mathcal{L}| (\Theta_{S_i}, (\overline{N}+(\varphi_i(\overline{N})))|_{S_i}) \in \mathcal{P}_i  \}
	\end{align*}
	is also a rational sub-polytope containing $(\Delta,\overline{M})$. Replacing $\mathcal{L}$ with $\bigcap_i \mathcal{L}_i$, where $i$ runs over all components $S_i$ of $G$, for each point $(\Theta,\overline{N}) \in \mathcal{L}$, we have that $(X,S+\Theta+N)$ is g-lc with data $\overline{N}$, and that $(K_X+S+\Theta+N)|_{S_i}$ is semi-ample$/Z$ for every $i$.\\
	
	{\noindent \textbf{Step 2.}} Let $\beta=\|\Delta + M\|$. Let $d$ be an integer such that $dK_X$, $d B_i$ are Cartier for all $i$, $d M_{j,l}$ are Cartier for all $l$ where $M$ is supported by $\sum_l M_{j,l}$, and pick $\alpha \ll 1- \beta$. Write the convex combinations $\Delta=\sum_i \alpha_i \Delta_i$ and $\overline{M}=\sum_i \alpha_i \overline{M}_i$. By Lemma \ref{diophantine} we can further assume that 
	\begin{align*}
		\|\Delta_i-\Delta \|\leq \frac{\alpha}{m_i},~ \|M_i-M \|\leq \frac{\alpha}{m_i}\mathrm{~where~} \frac{m_i}{d}\Delta_i,\frac{m_i}{d}M_i \mathrm{~are~integral}.
	\end{align*}
   Write
	\begin{align*}
		m_k(K_X+S+\Delta_k+M_k)-S = &K_X+\Delta +M +m_k(\Delta_k-\Delta)+m_k(M_k-M) \\
		& +(m_k-1)\epsilon P+(m_k-1)(K_X+B+M-\epsilon P). 
	\end{align*}
	Since $(X,\Delta +M +m_k(\Delta_k-\Delta)+m_k(M_k-M) +(m_k-1)\epsilon P)$ is klt for $0<\epsilon \ll 1$, $S$ is supported by $(m_k-1)\epsilon P$ and $K_X+B-\epsilon P$ is semi-ample$/Z$, by Kawamata-Viehweg vanishing and an injectivity theorem \cite[Theorem 6.1]{fujino-inj}, we have that
	$$
	H^1(X, \mathcal{O}_X(m_k(K_X+S+\Delta_k+M_k)-S)) \rightarrow H^1(X,\mathcal{O}_X(m_k(K_X+S+\Delta_k+M_k)))
	$$
	is injective, hence 
	$$
	H^0(X, \mathcal{O}_X(m_k(K_X+S+\Delta_k+M_k))) \rightarrow H^0(S,\mathcal{O}_S(m_k(K_S+B_{k,S}+M_{k,S})))
	$$ 
	is surjective. We have the commutative diagram as follows.
	$$
	\xymatrix{
		H^0(X, \mathcal{O}_X(m_k(K_X+S+\Delta_k+M_k))) \ar[d]_{} \ar[r]_{}&  H^0(S,\mathcal{O}_S(m_k(K_S+B_{k,S}+M_{k,S}))) \ar[d]^{}  &\\
		\mathcal{O}_X(m_k(K_X+S+\Delta_k+M_k))  \ar[r]_{}  & \mathcal{O}_S(m_k(K_S+B_{k,S}+M_{k,S}))   &
	} 
	$$
	Since the upper arrow and the right arrow are surjective, by Theorem \ref{thm-semiample-slc}, $\mathcal{O}_X(m_k(K_X+S+\Delta_k+M_k)) $ is relatively
	globally generated along $S$ over $Z$ which in turn implies that the $\R$-stable base locus $\mathbf{B}(|K_X+B+M/Z|_\R)$ does not intersect with $S$. Since $K_X+B+M-\epsilon P$ is semi-ample$/Z$ for $0 <\epsilon \ll 1$, the $\R$-stable base locus $\mathbf{B}(|K_X+B+M/Z|_\R)$ must be contained in the support of $S$, hence $\mathbf{B}(|K_X+B+M/Z|_\R) =\emptyset$. We apply Lemma \ref{semi-ample} to conclude that $K_X+B+M$ is semi-ample$/Z$.
\end{proof}

\begin{rem}
	For Theorem \ref{thm-lc-flips-3}, we can loosen the semi-ampleness for each component of $\rddown{B}$ and the assumption $\Supp P =\rddown{B}$ to the semi-ampleness for each component of $\Supp P$ and  $\Supp P \subseteq \rddown{B}$. But note that in this case we may need to require an extra assumption on the base locus of $\overline{M}$. See \cite[Theorem 3.3]{hu} for a reference and apply the latter assertion of Lemma \ref{lem-convex-combination}. 
\end{rem}

%%%%%%%%%%%%%%%

\end{document}